\newtheorem{lemma}{Lemma}[section]
\newtheorem{theorem}{Theorem}[section]
\newtheorem{proposition}[theorem]{Proposition}
\theoremstyle{definition}
\newtheorem{definition}{Definition}[section]
\newtheorem{corollary}[theorem]{Corollary}
\newtheorem{remark}{Remark}
\newcommand{\comment}[1]{}
\newcommand{\R}{\mathbb{R}}
\newcommand{\Gammatt}{\widetilde{\Gamma}_2}
\newcommand{\asum}[1]{\widetilde{\sum_{#1}}}
\numberwithin{equation}{section}
\begin{document}
\title{Volume doubling, Poincar\'{e} inequality and Gaussian heat kernel estimate for non-negatively curved graphs}
\author{Paul Horn, Yong Lin\footnotemark[1], Shuang Liu, Shing-Tung Yau\footnotemark[2]}
\date{}
\maketitle

\renewcommand{\thefootnote}{\fnsymbol{footnote}}
\footnotetext[1]{Supported by the National Natural Science Foundation of China(Grant No$11271011$).}
\footnotetext[2]{Supported by the NSF DMS-0804454.}

\begin{center}
\textbf{Abstract}
\end{center}

By studying the heat semigroup, we prove Li-Yau type estimates for  bounded and positive solutions of the heat equation on graphs, under the assumption of the curvature-dimension inequality $CDE'(n,0)$, which can be consider as a notion of curvature for graphs. Furthermore, we derive that if a graph has non-negative curvature then it has the volume doubling property, from this we can prove the Gaussian estimate for heat kernel, and then Poincar\'{e} inequality and Harnack inequality. As a consequence, we obtain that the dimension of space of harmonic functions on graphs with polynomial growth is finite, which original is a conjecture of Yau on Riemannian manifold proved by Colding and Minicozzi. Under the assumption of positive curvature on graphs, we derive the Bonnet-Myers type theorem that the diameter of graphs is finite and bounded above in terms of the positive curvature by proving some Log Sobolev inequalities.

\section{Introduction}

The Li-Yau inequality is a very powerful tool for studying positive solutions to the heat equation on manifolds. In its simplest case, it states that a positive solution $u$ (that is a positive $u$ satisfying $\partial_t  u = \Delta u$) on a compact $n$-dimensional manifold with non-negative curvature satisfies
\begin{equation}\label{eq:ly}
\frac{|\nabla u|^2}{u^2}-\frac{\partial_t u}{u}\leq \frac{n}{2t}.
\end{equation}
Beyond its utility in the study of Riemannian manifolds, variants of the Li-Yau inequality have proven to be an important tool in non-Riemannian settings as well.  Recently, in \cite{BHLLMY13}, the authors have proved a discrete version of Li-Yau inequality on graphs.  The discrete setting provided myriad challenges, with many of these stemming from the lack of a chain rule for the Laplacian in a graph setting.  Overcoming this, involved introducing a new notion of curvature for graphs, and exploited crucially the fact that a chain rule formula for the Laplacian does hold in a few isolated case, along with a  discrete version of maximum principle. Indeed, while there are two main methods known to prove the gradient estimate (\ref{eq:ly}) -- one being the maximum principle (as in \cite{LY86} on manifolds and \cite{LY10} on graphs), and the other being semigroup methods (\cite{BL06} on manifolds) -- the standard application of both techniques relies heavily on the chain rule and the continuous nature of the underlying space.

The Li-Yau inequality has many applications in Riemannian geometry, but among the most important of these is establishing Harnack inequalities. Indeed, inequality~\eqref{eq:ly} can be integrated over space-time in order to derive Harnack inequalities, yielding an inequality of the form:
\begin{equation} \label{eq:harnack}
u(x,s) \leq C(x,y,s,t) u(y,t),
\end{equation}
where $C(x,y,s,t)$ depends only on the distance of $(x,s)$ and $(y,t)$ in space-time.
The Li-Yau inequality, and more generally of parabolic Harnack inequalities like \eqref{eq:harnack} can also be used to derive further heat kernel estimates.  In this direction, one of the most important estimates to achieve are the following Gaussian type bounds:
\begin{equation} \label{eq:gaussian}
\frac{c_{l}m(y)}{V(x,\sqrt{t})}e^{-C_{l}\frac{d(x,y)^{2}}{t}}\leq p(t,x,y)\leq \frac{C_{r}m(y)}{V(x,\sqrt{t})}e^{-c_{r}\frac{d(x,y)^{2}}{t}},
\end{equation}
where $p(t,x,y)$ is a fundamental solution of the heat equation (heat kernel).  The Li-Yau inequality can be used to prove exactly such bounds for the heat kernel on non-negatively curved manifolds.  Thus, via the Li-Yau inequality it can be shown that non-negatively curved manifolds satisfy a strong form of the Harnack inequality~\eqref{eq:harnack}, along with a Gaussian estimate~\eqref{eq:gaussian}.  It also is known, by combining the Bishop-Gromov comparison theorem \cite{Bi63} and the work of Buser \cite{Bu82} that non-negatively curved manifolds also satisfy the volume growth condition known as volume doubling and the Poincar\'e inequality (see also the paper of Grigor'yan, \cite{G92}).

In manifold setting, Grigor'yan~\cite{G92} and Saloff-Coste~\cite{SC95} independently gave a complete characterization of manifolds satisfying \eqref{eq:harnack}. They showed that satisfying a volume doubling property along with Poincar\'e inequalities is actually {\it equivalent} to satisfying the Harnack inequality~\eqref{eq:harnack}, and is also equivalent to satisfying the Gaussian estimate~\eqref{eq:gaussian}.  Thus, in the manifold setting the three conditions discussed above that are implied by non-negative curvature are actually all equivalent.  Curvature still plays an important role however, as a local property that certifies that a manifold satisfies the three (equivalent) global properties.

In the case of graphs, Delmotte~\cite{D99} proved a characterization analogous to that of Saloff-Coste for both continuous and discrete time.  Until now, however, no known notion of curvature on graphs has been sufficient to imply that a graph satisfies these three conditions.  This is not to say that the question of whether some sort of curvature lower bound implies strong geometric properties in a non-Riemannian setting.  On metric measure spaces, for instance, under some curvature lower bound assumptions, Sturm \cite{S06}, Rajala \cite{R12}, Erbar, Kuwada and  Sturm \cite{EKS13} and Jiang, Li and Zhang \cite{JLZ14} studied the volume doubling property along with Poincar\'e inequalities and Gaussian heat kernel estimates.

Despite the successes of \cite{BHLLMY13} in establishing a discrete analogue of the Li-Yau inequality, their ultimate result also had some limitations.  Most notably, the results of \cite{BHLLMY13} were unable insufficient to derive the equivalent conditions of volume doubling and Poincar\'e inequalities, along with Gaussian heat kernel bounds, and the strongest form of a Harnack inequality.  This failure arose from the generalization of \eqref{eq:ly} achieved when considering only a positive solution inside a ball of radius $R$: in the classical case an extra term of the form `$\frac{1}{R^2}$' occurred, but in the graph case in general the authors were only able to prove a result with an extra term of the factor `$\frac{1}{R}$'.  This difference resulted in only being able to establish weaker bounds on the heat kernel, and polynomial volume growth as opposed to the stronger condition of volume doubling.  Ultimately one of the reasons for these weaker implications was the methods used:~\cite{BHLLMY13} used maximum principle arguments, and ultimately ran into problems when cutoff functions were needed.

In this paper, we develop a way to apply semigroup techniques in the discrete setting in order to study the heat kernel of graphs with non-negative Ricci curvature.  From here, we obtain a family of global gradient estimates for bounded and positive solutions to the heat equation on an infinite graph.  The curvature notion used, as in \cite{BHLLMY13}, is a modification of the so-called curvature dimension inequality.  Satisfying a curvature dimension inequality has proven to be an important generalization of having a Ricci curvature lower bound in the non-Riemannian setting (see, eg. \cite{BE83, BL06}).  The utility of satisfying the standard such inequality is much lower when the Laplace operator does not satisfy the chain rule, such as in the graph case.  This led to the modification used in this paper (and in \cite{BHLLMY13}) the so-called {\it exponential curvature dimension inequalities}.  A more detailed description of the curvature notion used in this paper, and the motivation behind it, is given in Section \ref{sectionCDE}. It is, however, important to note that in the Riemannian case (and more generally when the Laplacian generates a diffusive semigroup) the classical curvature dimension inequality, and the exponential curvature dimension inequalities are equivalent.

From our new methods,  we show that non-negatively curved graphs (in the sense of the exponential curvature dimension inequalities) satisfy volume doubling.  This improves the results of~\cite{BHLLMY13}, in which only polynomial volume growth is derived.   This improvement is the key point in proving the discrete-time Gaussian lower and upper estimates of heat kernel, and from this the Poincar\'{e} inequality and Harnack inequality on graphs.  As an important technical point, we do not simply establish volume doubling and the Poincar\'e inequality and then apply the results of Delmotte~\cite{D99} to establish the other (equivalent conditions).  Instead, after proving volume doubling we attack the Gaussian bounds directly -- using volume doubling along with additional information from our methods to establish the Gaussian bounds.  Once Gaussian bounds are established, however, we can use the results of Delmotte to `complete the circle,' and establish the remaining desired properties.  We emphasize that although a number of notions of curvature for graphs have been introduced (see, eg,\cite{LY10, BHLLMY13}) no previous notion has been shown to imply these conditions -- and in fact, \cite{BHLLMY13} was the first paper to show that a non-negative curvature condition for graphs implied polynomial volume growth.

We further derive continuous-time Gaussian lower estimate of heat kernel.  It proves to be impossible, however, to prove continuous-time Gaussian {\it upper} bounds on the heat kernel, however, as from the paper of Davies \cite{DB93} and Pang \cite{P93} the continuous-time Gaussian upper estimate is not true on graphs.

While we prove this for any non-negatively curved graph, it is important to note that Gaussian estimates for the heat kernel for Cayley graphs of a finite generated group of polynomial growth were proved by Hebisch and
Saloff-Coste in \cite{HS93}. For non-uniform transition case, Strook and Zheng proved related Gaussian estimates on lattices in \cite{SZ97}.

Establishing that a graph satisfies both volume doubling and the Poincar\'{e} inequality has important consequences.  For example, under these assumptions on graphs, Delmotte in \cite{D97} proved that the dimension of the space of Harmonic function on graphs with polynomial growth is finite.  This extends the similar result on Riemannian manifolds by Colding and Minicozzi in \cite{CM96}, see also Li in \cite{Li97}.  The original problem came from a conjecture of Yau (\cite{Yau86}) which stated that these space should have finite dimension in Riemannian manifolds with non-negative Ricci curvature.  Thus, our result answers the analogue conjecture of Yau for graphs in the affirmative.

Finally, under the assumption of a graph being positively curved (again, with respect to the exponential curvature dimension inequality), we derive a Bonnet-Myers type theorem that the diameter of graphs in terms of the canonical distance is finite.  We accomplish this by proving some logarithmic Sobolev inequalities.  Here we establish that certain diameter bounds of Bakry still hold, even though the Laplacian on graphs does not satisfy the diffusion property that Bakry used.  Under the same assumption, we also can prove that the diameter of graphs in terms of graph distance is finite by proving the finiteness of measure, plus the doubling property of volume.

The paper is organized as follows: We introduce our notation and formally state our main results in Section \ref{sec:statement}.  In Section \ref{sec:LY}, we prove our main variational inequality.  This inequality leads to a different proof of the Li-Yau gradient estimates on graphs from the one given in \cite{BHLLMY13}.  From this main inequality we establish an additional exponential integrability result, and ultimately, volume doubling in Section \ref{sec:VG}. From volume doubling, we can prove the Gaussian heat kernel estimate, parabolic
Harnack inequality and Poincar\'{e} inequality in Section \ref{sec:GE}.  Finally, in Section \ref{sec:DB}, we prove a Bonnet-Myers type theorem on graphs.

{\bf Acknowledgments} We thank Bobo Hua, Matthias Keller and Gabor Lippner for useful discussion. We also thank Daniel Lenz for many nice comments on the
paper. Part of the work of this paper was done when P. Horn and Y. Lin visited S.-T. Yau in The National Center for Theoretical Sciences in Taiwan University in May 2014 and when P. Horn visited Y. Lin in Renmin University of China in June 2014. We acknowledge the support from NCTS and Renmin University.
%In the diffusion case, for example in complete Riemannian manifolds with dimension $n$, $CDE'(n,K)$ is equivalent to the Ricci curvature is bounded below by $K$.  And we can show that $CDE'(n,K)$ has product property, so the class of graphs satisfying $CDE'(n,0)$ is rich in the sense that
%we can taking Cartesian products of graphs which satisfying $CDE'(n,0)$ and implying new graph satisfying $CDE'(n,0)$ with different dimension $n$.

\section{Preliminaries and Statement of main results} \label{sec:statement}
In this section we develop the preliminaries needed to state our main results.  Through the paper, we let $G=(V,E)$ be a finite or infinite connected graph.  We allow the edges on the graph to be weighted.  Weights are given by a function $\omega: V\times V\rightarrow [0,\infty)$, the edge $xy$ from $x$ to $y$ has weight $\omega_{xy}>0$.  In this paper, we assume this weight function is symmetric (that is, $\omega_{xy}=\omega_{yx}$).  Furthermore, we assume that
$$\omega_{\min}=\inf_{e \in E: \omega_e > 0}\omega_e>0.$$
We furthermore allow loops, so it is permissible for $x \sim x$ (and hence $\omega_{xx} > 0$.)  Finally, we restrict our interest to locally finite.  That is, we assume that
$$m(x):=\sum_{y\sim x}\omega_{xy}<\infty, \quad \forall x\in V.$$

For our work, especially in the context of deriving Gaussian heat kernel bounds, one additional technical assumption is needed.  This is essentially needed to compare the continuous time and discrete time heat kernels.  In order for this to go smoothly two things need to happen: no edge can be too `small' (this is essentially the content of our assumption $\omega_{min} > 0$), and also at each vertex there must be a loop.  That is, we must assume $x \sim x$ -- this prevents `parity problems' of bipartiteness that would make the continuous and discrete time kernels incomparable.  This condition is neatly captured in the following $\Delta(\alpha)$ used by Delmotte in \cite{D99}, but has also been used previously by other authors.

\begin{definition}
Let $\alpha>0$.  $G$ satisfies $\Delta(\alpha)$ if,
\begin{enumerate}
\item[1)] $x \sim x$ for every $x \in V$, and
\item[2)] If $x,y \in V$, and $x\sim y$,
\[\omega_{xy}\geq \alpha m(x).\]
\end{enumerate}
\end{definition}
As a remark, if a loop is on every edge and $\sup_x m(x) < \infty$, then the condition $\omega_{\min} > 0$ is sufficient to certify that a graph satisfies $\Delta(\omega_{\min}/\sup_x m(x)).$  In general, this is a rather mild condition. It is easy to check, for instance, that adding loops does not decrease the curvature for our curvature condition (see Section \ref{sectionCDE} below) nor change many the geometric quantities we seek to understand (eg. volume growth, and diameter).  Thus even graphs without loops may safely be altered to satisfy this condition.

\subsection{Laplace Operators on Graphs}

Let $\mu:V \rightarrow \mathbb{R^{+}}$ be a positive measure on the vertices of the $G$. We denote by $V^{\mathbb{R}}$ the space of real functions on $V$. and we denote by $\ell^{p}(V,\mu)=\{f \in V^{\mathbb{R}}:\sum_{x\in V}\mu(x)|f(x)|^{p}<\infty\}$, for any $1\leq p< \infty$,
the space of $\ell^{p}$ integrable functions on $V$ with respect to the measure $\mu$.  For $p=\infty$, let $\ell^{\infty}(V,\mu)=\{f \in V^{\mathbb{R}}:\sup_{x\in V}|f(x)|<\infty\}$ be the set of bounded functions. For any $f,g \in \ell^{2}(V,\mu)$, we let $\langle f,g\rangle=\sum_{x\in V}\mu(x)f(x)g(x)$ denote the standard inner product.  This makes $\ell^{2}(V,\mu)$ a Hilbert space.  As is usual, we can define the $\ell^p$ norm of $f\in \ell^{p}(V,\mu),1\leq p\leq \infty$:
\[\|f\|_p=\left(\sum_{x\in V}\mu(x)|f(x)|^p\right)^{\frac{1}{p}}, 1\leq p<\infty ~\mbox{and}~ \|f\|_{\infty}=\sup_{x\in V}|f(x)|.\]

We define the $\mu-$Laplacian $\Delta:V^{\mathbb{R}}\rightarrow V^{\mathbb{R}}$ on $G$ by, for any $x\in V$,
$$\Delta f(x)=\frac{1}{\mu(x)}\sum_{y\sim x} \omega_{xy}(f(y)-f(x)).$$
Similar summations occur frequently, so we introduce the following shorthand notation for such an ``averaged sum.''
$$\widetilde{\sum_{y\sim x}}h(y)=\frac{1}{\mu(x)}\sum_{y\sim x} \omega_{xy}h(y) \quad \forall x\in V.$$

Under our locally finite assumption, it is clear that for any bounded $f$, $\Delta f$ is likewise bounded.  We treat the case of $\mu$ Laplacians quite generally, but the two most natural choices are the case where $\mu(x)=m(x)$ for all $x\in V$, which is the normalized graph Laplacian, and the case $\mu$ $\equiv 1$ which is the standard graph Laplacian.  Furthermore, in this paper we assume
$$D_{\mu}:=\max_{x \in V}\frac{m(x)}{\mu(x)}<\infty.$$
It is easy to check that $D_{\mu}<\infty$ is equivalent to the Laplace operator $\Delta$ being bounded on $\ell^{2}(V,\mu)$ (see also \cite{HKLW12}).
The graph is endowed with its natural graph metric $d(x,y)$, i.e. the smallest number of edges of a path between two vertices $x$ and $y$. We define balls $B(x,r) = \{y\in V:d(x,y)\leq r\}$, and the volume
of a subset $A$ of $V$, $V(A) =\sum_{x\in A}\mu(x)$. We will write $V(x,r)$ for $V(B(x,r))$.

%\subsection{Curvature-dimension inequalities} \label{sectionCDE}
%In this section we introduce the notion of the CD inequality. First we need to recall the definition of two bilinear %forms associated to the $\mu-$Laplacian.
%\begin{definition}

%\end{defintion}
%We can show that the $CDE'(n,K)$ has product property(see also similar result for $CD(n,K)$  in \cite{LP14}).
%\begin{proposition}\label{product}
%Suppose graph $G_1$ satisfies $CDE'(n_{1},K_{1})$ and graph $G_2$ satisfies %$CDE'(n_{2},K_{2})$, then the Cartesian product of graph
%$G_{1}\times G_{2}$ satisfies  $CDE'(n_{1}+n_{2},\min{K_{1},K_{2}})$.
%\end{proposition}
%From Proposition~\ref{product}, we can construct lot of graphs satisfy the $CDE'(n,0)$ assumption with different %dimension $n$ by taking the Cartesian product of graphs which satisfying the  $CDE'(n,0)$.

\subsection{Curvature Dimension Inequalities} \label{sectionCDE}

In order to study curvature of non-Riemannian spaces, it is important to have a good definition that allows one to capture the important consequences.  One way to do this is through the so-called {\it curvature-dimension inequality} or CD-inequality.  An immediate consequence of the well-known Bochner identity is that on any $n$-dimensional manifold with curvature bounded below by $K$, any smooth $f:M \to \mathbb{R}$ satisfies:

\begin{equation} \label{eq:cd-ine}
\frac{1}{2}\Delta|\nabla f|^2\geq\langle\nabla f,\nabla\Delta f\rangle+\frac{1}{n}(\Delta f)^2+K|\nabla f|^2.
\end{equation}
It was an important insight by Bakry and Emery~\cite{BE83} that one can use \ref{eq:cd-ine} as a substitute for a lower Ricci curvature bound on spaces where a direct generalization of Ricci curvature is not available. Since all known proofs of the Li-Yau gradient estimate exploit non-negative curvature condition through the $CD$-inequality, Bakry and Ledoux~\cite{BL06} succeeded to use it to generalize~\eqref{eq:ly} to Markov operators on general measure spaces when the operator satisfies a chain rule type formula.

To formally introduce this notion for graphs, we first introduce some notation.
\begin{definition}
 The gradient form $\Gamma$, associated with a $\mu$-Laplacian is defined by
\begin{align*}
2\Gamma(f,g)(x)& =(\Delta(f\cdot g)-f\cdot \Delta(g)-\Delta(f)\cdot g)(x)\\
               & =\asum{y\sim x}(f(y)-f(x))(g(y)-g(x)).
\end{align*}
We write $\Gamma(f)=\Gamma(f,f)$.
\end{definition}
Similarly,
\begin{definition}
The iterated gradient form $\Gamma_{2}$ is defined by
\[ 2\Gamma_{2}(f,g) = \Delta\Gamma(f,g)-\Gamma(f,\Delta g)-\Gamma(\Delta f,g). \]
We write $\Gamma_{2}(f)=\Gamma_{2}(f,f)$.
\end{definition}
\begin{definition}
The graph $G$ satisfies the CD inequality $CD(n,K)$ if, for any function $f$ and at every vertex $x \in V(G)$
\begin{equation}
\Gamma_{2}(f)\geq \frac{1}{n}(\Delta f)^{2}+K\Gamma(f). \label{eqn:cd}
\end{equation}
\end{definition}

On graphs -- where the Laplace operator fails to satisfy the chain rule -- satisfying the $CD(n,0)$ inequality seems insufficient to prove a generalization of \eqref{eq:ly}.  None the less, in~\cite{BHLLMY13} the authors prove a discrete analogue of the Li-Yau inequality.  The curvature notion they use is a modification of the standard curvature notion, which they call the exponential curvature dimension inequality.  In reality, the authors of \cite{BHLLMY13} introduce two slightly different curvature conditions, which they call $CDE$ and $CDE'$, both of which we recall below.

\begin{definition}
We say that a graph $G$ satisfies the {\it exponential curvature dimension inequality} $CDE(x,n,K)$ if  for any positive function $f : V\to \R^+$ such that $\Delta f(x) <0$, we have
\begin{equation} \widetilde{\Gamma_2}(f)(x)= \Gamma_2(f)(x)- \Gamma\left(f, \frac{\Gamma(f)}{f}\right)(x) \geq \frac{1}{n} (\Delta f)(x)^2 + K \Gamma(f)(x). \label{eqn:cde}
\end{equation}
We say that $CDE(n,K)$ is satisfied if $CDE(x,n,K)$ is satisfied for all $x \in V$.
\end{definition}

\begin{definition}
We say that a graph $G$ satisfies the $CDE'(x,n,K)$, if for any positive function $f : V\to \R^+$, we have
\begin{equation}
\widetilde{\Gamma_2}(f)(x) \geq \frac{1}{n} f(x)^2\left(\Delta \log f\right)(x)^2 + K \Gamma(f)(x).
\label{eqn:cde'}
\end{equation}
We say that $CDE'(n,K)$ is satisfied if $CDE'(x,n,K)$ is satisfied for all $x \in V$.
\end{definition}

The reason these are known as the {\it exponential} curvature dimension inequalities  is illustrated in Lemma 3.15 in \cite{BHLLMY13}, which states the following:
\begin{proposition}
If the semigroup generated by $\Delta$ is a diffusion semigroup (e.g. the Laplacian on a manifold), then $CD(n,K)$ and $CDE'(n,K)$ are equivalent.
\end{proposition}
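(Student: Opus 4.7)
The plan is to exploit the chain rules available in the diffusion setting: for smooth $\phi$, the Laplacian satisfies $\Delta\phi(f)=\phi'(f)\Delta f+\phi''(f)\Gamma(f)$, the gradient form obeys $\Gamma(\phi(f),g)=\phi'(f)\Gamma(f,g)$, and $\Gamma$ is a derivation in each argument, i.e.\ $\Gamma(f,gh)=g\,\Gamma(f,h)+h\,\Gamma(f,g)$. These identities fail on general graphs, which is precisely why $CDE'$ was introduced, but in the diffusion setting they let us move everything through the substitution $g=\log f$.

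The whole equivalence reduces to a single pointwise identity: for every positive $f$,
\[ \widetilde{\Gamma_2}(f)\;=\;f^{2}\,\Gamma_{2}(\log f). \]
Coupled with the immediate chain-rule consequences $\Gamma(\log f)=\Gamma(f)/f^{2}$ and $\Delta\log f=\Delta f/f-\Gamma(f)/f^{2}$, this identity makes $f^{2}(\Delta\log f)^{2}$ and $f^{2}\Gamma(\log f)$ into explicit expressions in $f$, $\Delta f$, $\Gamma(f)$, so that the $CDE'(n,K)$ inequality at $f$ is literally $f^{2}$ times the $CD(n,K)$ inequality at $\log f$.

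To verify the central identity I would expand $\Gamma_{2}(\log f)=\tfrac{1}{2}\Delta\Gamma(\log f)-\Gamma(\log f,\Delta\log f)$. For the first term, apply the product rule $\Delta(uv)=u\Delta v+v\Delta u+2\Gamma(u,v)$ to the factorization $\Gamma(f)\cdot(1/f^{2})$, using $\Delta(1/f^{2})=-2\Delta f/f^{3}+6\Gamma(f)/f^{4}$ and $\Gamma(\Gamma(f),1/f^{2})=-2\Gamma(f,\Gamma(f))/f^{3}$ from the chain rules. For the second term, pull out a factor $1/f$ via $\Gamma(\log f,\,\cdot\,)=(1/f)\Gamma(f,\,\cdot\,)$ and expand $\Gamma(f,\Delta f/f-\Gamma(f)/f^{2})$ by Leibniz. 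After cancellations one is left with
\[ \Gamma_{2}(\log f)\;=\;\frac{\Gamma_{2}(f)}{f^{2}}\;-\;\frac{\Gamma(f,\Gamma(f))}{f^{3}}\;+\;\frac{\Gamma(f)^{2}}{f^{4}}. \]
Multiplying by $f^{2}$ and recognising, again by Leibniz plus $\Gamma(f,1/f)=-\Gamma(f)/f^{2}$, that $\Gamma(f,\Gamma(f)/f)=\Gamma(f,\Gamma(f))/f-\Gamma(f)^{2}/f^{2}$, one reads off exactly $\widetilde{\Gamma_2}(f)$.

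With the identity established, the equivalence is formal. Assuming $CD(n,K)$, apply it to $\log f$ for an arbitrary $f>0$ and multiply by $f^{2}$; the left side becomes $\widetilde{\Gamma_2}(f)$ and the right becomes $\tfrac{1}{n}f^{2}(\Delta\log f)^{2}+K\Gamma(f)$, which is $CDE'(n,K)$ at $f$. Conversely, given $CDE'(n,K)$, for any $g$ set $f=e^{g}>0$, apply $CDE'$ to $f$, and divide by $f^{2}$; since $\log f=g$, one recovers $CD(n,K)$ at $g$, and the restriction $f>0$ is no loss because $\Gamma$ and $\Gamma_{2}$ are invariant under adding constants to their argument. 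The only non-routine step is the bookkeeping needed for $\widetilde{\Gamma_2}(f)=f^{2}\Gamma_{2}(\log f)$, and that is where the diffusion chain rules do all the work: every cancellation that would be illegal on a graph goes through here.
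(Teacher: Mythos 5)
Your proposal is correct and follows the same approach the paper takes (which it attributes to Lemma 3.15 of \cite{BHLLMY13} and only sketches): use the diffusion chain rules to show that $CDE'$ at $f$ is $f^2$ times $CD$ at $\log f$, then apply $CD$ to $\log f$ for one direction and $CDE'$ to $e^g$ for the other. Your explicit verification of the identity $\widetilde{\Gamma_2}(f)=f^2\Gamma_2(\log f)$ simply fills in the computation the paper leaves implicit.
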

To show that $CDE'(n,K) \Rightarrow CD(n,K)$ one takes an arbitrary function $f$, and applies \eqref{eqn:cde'} to $\exp(f)$ to verify that \eqref{eqn:cd} holds. Likewise, to verify that $CD(n,K) \Rightarrow CDE(n,K)$ one takes an arbitrary positive function $f$, and applies \eqref{eqn:cd} to $\log(f)$ to verify $\eqref{eqn:cde'}$.  This equivalence, however, makes strong use of the chain rule, and hence the fact that $\Delta$ generates a diffusion semigroup.

The relation between $CDE'(n,K)$ and $CDE(n,K)$ is the following:
\begin{remark}\label{rem:cde}
$CDE'(n,K)$ implies $CDE(n,K)$.
\end{remark}
\begin{proof}
Let $f:V \rightarrow \mathbb{R}^{+}$ be a positive function for which $\Delta f(x)< 0$. Since $\log s\leq s-1$ for all positive $s$, we can write
$$\Delta \log f(x)=\widetilde{\sum_{y\sim x}}\left(\log f(y)-\log f(x)\right)=\widetilde{\sum_{y\sim x}}\log \frac{f(y)}{f(x)}\leq \widetilde{\sum_{y\sim x}} \frac{f(y)-f(x)}{f(x)}=\frac{\Delta f(x)}{f(x)}< 0.$$
Hence squaring everything reverses the above inequality and we get
$$(\triangle f(x))^{2}\leq f(x)^{2}(\triangle \log f(x))^{2},$$
and thus  $CDE(x,n,K)$ is satisfied
$$\widetilde{\Gamma_2}(f)(x) \geq \frac{1}{n} f(x)^2\left(\triangle \log f\right)(x)^2 + K \Gamma(f)(x) > \frac{1}{n} (\Delta f)(x)^2 + K \Gamma(f)(x).$$
\end{proof}

In \cite{BHLLMY13}, the $CDE(n,K)$ inequality is preferred: the $\Delta \log(f)$ term occurring in the $CDE'$ inequality is awkward in the discrete case, the $CDE(n,K)$ inequality is weaker in general, and the $CDE(n,K)$ inequality sufficed for proving the Li-Yau inequality.

None the less, as the results in this paper will show, for the purposes of
applying semigroup arguments the $CDE'(n,K)$ inequality is to be preferred.  The primary reason for this is the fact that $CDE'(n,K)$ implies a non trivial lower bound on $\Gammatt(f)$ for a positive function $f$ at {\it every} point on a graph, as opposed to just the points where $\Delta f < 0$.  For maximum principle arguments, restricting to points where $\Delta f < 0$ turns out not to be a major restriction, but in the more global arguments we apply in this paper $CDE'(n,K)$ appears to be more useful.

We note that, in general, the conditions $CDE'$ and $CDE$ better capture the spirit of a Ricci curvature lower bound than the classical $CD$ condition.  For instance, every graph satisfies $CD(2,-1)$ -- that is, there is an absolute lower bound to the curvature of of graphs.  On the other hand, a $k$-regular tree satisfies $CDE(2,-d/2)$ and this negative curvature is (asymptotically) sharp.  Thus with the exponential curvature condition, negative curvature is unbounded.  This is unique amongst graph curvature notions.

Moreover, \cite{BHLLMY13} showed that lattices, and more generally Ricci-flat graphs in the sense of Chung and Yau~\cite{CY96} which include the abelian Cayley graphs, have non-negative curvature $CDE(n,0)$ and $CDE'(n,0)$.
We can show that the $CDE'(n,K)$ has product property(see also similar result for $CD(n,K)$  in \cite{LP14}).
So we can construct lot of graphs satisfy the $CDE'(n,0)$ assumption with different dimension $n$ by taking the Cartesian product of graphs which satisfying the  $CDE'(n,0)$.

\subsection{Main Results}

The first main result, alluded to in the introduction, is that satisfying $CDE'(n,0)$ is sufficient to imply that a graph satisfies several important conditions: volume doubling, the Poincar\'e inequality, Gaussian bounds for the heat kernel, and the continuous-time Harnack inequality.  For preciseness, we state these conditions now:
\begin{definition}{\hspace{0.1in}}\\

\begin{enumerate}
\item [($DV$)] A graph $G$ satisfies the {\bf volume doubling} property $DV(C)$ for constant $C > 0$ if for all $x \in V$ and all $r > 0$:
\[
V(x,2r) \leq C V(x,r).
\]
\item[($P$)] A graph $G$ satisfies the  {\bf Poincar\'{e} inequality} $P(C)$ for a constant $C > 0$ if
\[\sum_{x\in B(x_0,r)}m(x)|f(x)-f_B|^2\leq C r^2 \sum_{x,y\in B(x_0,2r)}\omega_{xy}(f(y)-f(x))^2,\]
for all $f\in V^{\mathbb{R}}$, for all $x_0\in V$,  and for all $r\in \mathbb{R^+}$, where
\[f_B=\frac{1}{V(x_0,r)}\sum_{x\in B(x_0,r)}m(x)f(x).\]
\item[($\mathcal{H}$)] Fix $\eta\in (0,1)$ and $0<\theta_1<\theta_2<\theta_3<\theta_4$ and $C>0$. $G$ satisfies the {\bf continuous-time Harnack inequality}  $\mathcal{H}(\eta,\theta_1,\theta_2,\theta_3,\theta_4,C)$, if for all $x_0\in V$ and $s, R\in \mathbb{R^{+}}$, and every positive solution $u(t,x)$ to the heat equation on $Q=[s,s+\theta_4R^{2}]\times B(x_0,R)$, we have
\[\sup_{Q^{-}}u(t,x)\leq C\inf_{Q^{+}}u(t,x),\]
where $Q^{-}=[s+\theta_1R^{2},s+\theta_2R^{2}]\times B(x_0, \eta R)$, and $Q^{+}=[s+\theta_3R^{2},s+\theta_4R^{2}]\times B(x_0, \eta R)$.
\item[($H$)]  Fix $\eta\in (0,1)$ and $0<\theta_1<\theta_2<\theta_3<\theta_4$ and $C>0$. $G$ satisfies the {\bf discrete-time Harnack inequality}  $H(\eta,\theta_1,\theta_2,\theta_3,\theta_4,C)$, if for all $x_0\in V$ and $s, R\in \mathbb{R^{+}}$, and every positive solution $u(x,t)$ to the heat equation on $Q=([s,s+\theta_4R^{2}]\cap \mathbb{Z})\times B(x_0,R)$, we have
\[(n^{-},x^{-})\in Q^{-},(n^{+},x^{+})\in Q^{+}, d(x^{-},x^{+})\leq n^{+}-n^{-}\]
implies
\[u(n^{-},x^{-})\leq C u(n^{+},x^{+}),\]
where $Q^{-}=([s+\theta_1R^{2},s+\theta_2R^{2}]\cap \mathbb{Z})\times B(x_0, \eta R)$, and $Q^{+}=([s+\theta_3R^{2},s+\theta_4R^{2}]\cap \mathbb{Z})\times B(x_0, \eta R)$.
\item[($G$)] Fix positive constants $c_{l}, C_{l}, C_{r}, c_{r}>0$.
The graph $G$ satisfies the {\bf Gaussian estimate} $G(c_{l},C_{l},C_{r},c_{r})$ if, whenever $d(x,y)\leq n$,
\[\frac{c_{l}m(y)}{V(x,\sqrt{n})}e^{-C_{l}\frac{d(x,y)^{2}}{n}}\leq p_{n}(x,y)\leq \frac{C_{r}m(y)}{V(x,\sqrt{n})}e^{-c_{r}\frac{d(x,y)^{2}}{n}}.\]
%\end{definition}
\end{enumerate}
\end{definition}

%Since we have already proved that the graph satisfies the discrete-time Gaussian estimate $G(c_{l},C_{l},C_{r},c_{r})$ if the conditions $CDE'(n_{0},0)$ and $\Delta(\alpha)$ are true on this graph. Delmotte shows that $G(c_{l},C_{l},C_{r},c_{r})$ $\Leftrightarrow$ $DV(C_1)$, $P(C_2)$ and $\Delta(\alpha)$ $\Leftrightarrow$ $H(\eta,\theta_1,\theta_2,\theta_3,\theta_4,C_H)$ (or $\mathcal{H}(\eta,\theta_1,\theta_2,\theta_3,\theta_4,C_\mathcal{H})$) for graphs.

The following theorem is the first of the main results of this paper.

\begin{theorem}[cf. Theorem~\ref{maintheorem1}]\label{maintheorem-1}
If the graph satisfies $CDE'(n_{0},0)$ and $\Delta(\alpha)$, we have the following four properties.
\begin{enumerate}
  \item[$1)$]{} There exists $C_1, C_2, \alpha>0$ such that $DV(C_1)$, $P(C_2)$, and $\Delta(\alpha)$ are true.
  \item[$2)$]{} There exists $c_{l},C_{l},C_{r},c_{r}>0$ such that $G(c_{l},C_{l},C_{r},c_{r})$ is true.
  \item[$3)$]{} There exists $C_H$ such that $H(\eta,\theta_1,\theta_2,\theta_3,\theta_4,C_H)$ is true.
  \item[$3)'$]{} There exists $C_\mathcal{H}$ such that $\mathcal{H}(\eta,\theta_1,\theta_2,\theta_3,\theta_4,C_\mathcal{H})$ is true.
\end{enumerate}
\end{theorem}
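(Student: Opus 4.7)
The plan is to assemble the four properties in a circular fashion, matching the outline given in the introduction: first prove volume doubling by semigroup arguments, then pass directly to Gaussian heat kernel bounds, and only then invoke Delmotte's equivalence to close the loop back to the Poincar\'e inequality and the two Harnack inequalities.

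First I would exploit the $CDE'(n_0,0)$ hypothesis through the semigroup $P_t = e^{t\Delta}$ acting on a bounded positive solution $u$. The goal of this first stage is a Li-Yau style gradient estimate of the form $\Gamma(\log u) \le C(\partial_t \log u + 1/t)$ (or an analogous integrated form), proved not by the maximum principle as in \cite{BHLLMY13} but by differentiating the auxiliary functional $\phi(s) = P_s\bigl((P_{t-s}u)\,\psi(P_{t-s}u)\bigr)$ in $s$; $CDE'(n_0,0)$ is exactly the curvature bound one needs for $\widetilde{\Gamma}_2$ on a \emph{positive} function to control the derivative at every vertex (not only where $\Delta f<0$), and this is the key reason for preferring $CDE'$ to $CDE$ here. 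Integrating the resulting differential inequality yields global gradient estimates valid on the whole graph.

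The second stage turns this gradient bound into an exponential integrability statement for the heat kernel $p_t(x,\cdot)$, in the spirit of Davies' method: if $u = P_t\mathbf{1}_A$, then $\log u$ has controlled $\Gamma$, which translates into bounds of the shape $\langle p_t(x,\cdot), e^{\lambda d(x,\cdot)}\rangle \le \exp(C\lambda^2 t)$. Combining this Gaussian-type tail bound with the on-diagonal lower bound $p_t(x,x) \ge c/V(x,\sqrt{t})$ (which itself comes from the gradient estimate together with $\Delta(\alpha)$, using the loop assumption to avoid bipartite pathologies), one gets $V(x, 2\sqrt t) \le C\,V(x,\sqrt t)$ at every scale, i.e.\ $DV(C_1)$. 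I expect this stage, and in particular taming the discrete Davies perturbation argument so it actually yields genuine doubling rather than merely polynomial volume growth, to be the main technical obstacle; it is precisely the step where \cite{BHLLMY13} fell short.

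With $DV$ in hand, the third stage derives the two-sided Gaussian estimate $G(c_l,C_l,C_r,c_r)$ directly. The upper bound follows from the Davies perturbation combined with the on-diagonal upper bound $p_t(x,y) \le C m(y)/V(x,\sqrt t)$, which in turn follows from $DV$ and the gradient estimate via the usual Nash-type iteration; here the hypothesis $\Delta(\alpha)$ is used to compare continuous-time and discrete-time kernels. The matching lower bound is obtained by a chaining argument from the on-diagonal lower bound along a geodesic of length $\sim d(x,y)^2/n$, using $DV$ to control the volumes of the balls along the chain. Finally, the fourth stage is essentially a citation: once $DV$, $P$ and $G$ (in fact any one of them plus $DV$) are available, Delmotte's characterization in \cite{D99} immediately supplies the discrete Harnack inequality $H$ and its continuous counterpart $\mathcal{H}$, and also the Poincar\'e inequality $P(C_2)$; the hypothesis $\Delta(\alpha)$ is exactly what is needed to invoke Delmotte's theorem in the discrete-time setting.
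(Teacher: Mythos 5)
Your proposal follows the same circular strategy as the paper: $CDE'(n_0,0)$ via semigroup/variational arguments yields the mass concentration estimate and hence volume doubling (Theorem~\ref{th:exp-ineq} and Theorem~\ref{th:doubling-volume}), one then attacks the Gaussian bounds directly rather than proving the Poincar\'e inequality independently, and finally Delmotte's equivalence theorem from \cite{D99} supplies $P(C_2)$, $H$ and $\mathcal{H}$. The only minor deviation is in the route to the Gaussian upper bound: you propose Davies perturbation plus Nash-type iteration, whereas the paper derives the continuous-time on-diagonal upper bound directly from the Harnack estimate of Corollary~\ref{coro:hanack}, passes to discrete time via Delmotte's coefficient-comparison lemma (Lemma~\ref{lem:coeff-comp}) using $\Delta(\alpha)$, and then upgrades the discrete-time on-diagonal bound to a full Gaussian upper bound via the Coulhon--Grigor'yan equivalence (Theorem~\ref{th:equ-pro}), with the lower bound obtained by the chaining argument you describe.
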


A function $u$ on $G$ is called harmonic function if $\Delta u=0$. A harmonic function $u$ on $G$ has polynomial growth if there
is positive number $d$ such that

$$\exists x_{0}\in V, \exists C>0, \forall x\in V_{x_{0}}, \mid u(x)\mid\leq C d(x_{0},x)^{d}.$$

Combining Theorem~\ref{maintheorem-1} and Delmotte's Theorem 3.2 from \cite{D97}, we obtain the following result which confirms the analogue of Yau's conjecture
(\cite{Yau86}) on graphs.

\begin{theorem}
If the graph satisfies $CDE'(n_{0},0)$ and $\Delta(\alpha)$, then the dimension of space of harmonic functions on $G$ has polynomial growth
is finite.
\end{theorem}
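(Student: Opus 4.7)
The plan is essentially to verify that the hypotheses of Delmotte's theorem from \cite{D97} are satisfied, so that the conclusion can be imported directly. Delmotte's Theorem 3.2 says: on a graph satisfying the volume doubling property $DV$ together with a Poincar\'e inequality $P$ (in precisely the form stated in Section~\ref{sec:statement}), the space of harmonic functions of polynomial growth of any fixed degree $d$ is finite dimensional. So the task is purely to supply $DV$ and $P$ from the hypothesis $CDE'(n_0,0)$ plus $\Delta(\alpha)$.

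First I would invoke conclusion (1) of Theorem~\ref{maintheorem-1}: under $CDE'(n_0,0)$ and $\Delta(\alpha)$ there exist constants $C_1,C_2 > 0$ such that $DV(C_1)$ and $P(C_2)$ both hold on $G$. I would then remark that the volume doubling and Poincar\'e inequality asserted there are stated in exactly the form (ball/ball averaging with respect to the measure $\mu$, with a factor of $r^2$ in $P$ and $B(x_0,2r)$ on the right-hand side) used by Delmotte in \cite{D97}, so no translation of constants is required beyond noting the values of $C_1,C_2$.

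Next I would recall the definition of polynomial growth in the statement preceding the theorem: a harmonic function $u$ has polynomial growth of degree $d$ if $|u(x)| \le C\, d(x_0,x)^d$ for some $x_0\in V$, $C>0$ and all sufficiently distant $x$. Under $DV(C_1)$ and $P(C_2)$, Delmotte's theorem gives a bound on the dimension of the space of such harmonic functions depending only on $C_1$, $C_2$, $\alpha$, and $d$; in particular this dimension is finite. This yields the stated theorem.

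The only thing requiring any care, rather than being an obstacle, is to be certain that Delmotte's formulation of $DV$ and $P$ in \cite{D97} coincides with the one adopted in this paper (same measure $\mu$, same weighting in $P$, same use of $B(x_0,2r)$ on the right-hand side, and the mild loop/edge-weight condition $\Delta(\alpha)$ that Delmotte also assumes). Since Theorem~\ref{maintheorem-1} is stated in precisely Delmotte's language, this verification is bookkeeping rather than substance, and the combination is immediate.
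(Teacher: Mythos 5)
Your proposal matches the paper's argument exactly: the paper derives this theorem by combining Theorem~\ref{maintheorem-1} (which yields $DV(C_1)$ and $P(C_2)$ under $CDE'(n_0,0)$ and $\Delta(\alpha)$) with Delmotte's Theorem 3.2 from \cite{D97}. Your additional remark about checking that the formulations of $DV$ and $P$ coincide with Delmotte's is sensible bookkeeping but not a departure from the paper's reasoning.
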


Our final main result is the following Bonnet-Myers theorems for graphs. We defer the definition of canonical distance of graph until Section \ref{sec:DB}.

\begin{theorem}[cf. Theorem~\ref{th:intr-diam-bound} and  Theorem~\ref{diameterbound}]
Let $G=(V,E)$ be a locally finite, connected graph satisfying $CDE'(n,K)$, and $K>0$, then the diameter $\widetilde{D}$ of graph $G$ in terms of the canonical distance satisfies the inequality
\[\widetilde{D}\leq 4\sqrt{3}\pi \sqrt{\frac{n}{K}},\]
and in particular is finite.  Furthermore the diameter $D$ of graph $G$ in terms of the graph distance is also finite, and satisfies
\[D\leq 2\pi\sqrt{\frac{6D_\mu n}{K}}.\]

\end{theorem}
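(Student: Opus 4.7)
The plan is to adapt the semigroup-based proof of the Bonnet--Myers theorem (due to Bakry and coauthors) to the non-diffusive graph setting, using $CDE'(n,K)$ as a surrogate for $CD(n,K)$. The strategy has three movements: (i) derive a logarithmic Sobolev inequality from $CDE'(n,K)$ with $K>0$; (ii) convert the LSI into a diameter bound in the canonical distance via a Herbst-type concentration argument; (iii) pass from the canonical diameter to the graph diameter using the bound $D_\mu<\infty$.

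For (i) I would compute $\frac{d}{dt}\mathrm{Ent}_\mu(P_t f)$ along the heat semigroup and differentiate once more to bring in $\Gammatt$. The curvature hypothesis
\[ \Gammatt(g) \geq \frac{1}{n}g^2(\Delta\log g)^2 + K\,\Gamma(g) \]
yields exponential decay of the entropy dissipation, producing a logarithmic Sobolev inequality with constant proportional to $1/K$. The correction term $\Gamma(g,\Gamma(g)/g)$ in $\Gammatt$ is precisely what compensates for the failure of the chain rule identity $\Gamma(\log g)=\Gamma(g)/g^2$ in the graph setting. Applied to $e^{\lambda f}$ with $\Gamma(f)\leq 1$, the LSI then feeds into the classical Herbst argument to produce a Gaussian concentration estimate $\int e^{\lambda(f-\bar f)}\,d\mu \leq \exp(c\lambda^2 n/K)$; optimizing over $\lambda$ yields the stated bound $\widetilde{D}\leq 4\sqrt{3}\pi\sqrt{n/K}$ on the canonical diameter.

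For (iii) I would observe that $g(z)=d(x_0,z)$ is $1$-Lipschitz in the graph metric, hence
\[ 2\Gamma(g)(z) = \asum{y\sim z}(g(y)-g(z))^2 \leq \asum{y\sim z} 1 = \frac{m(z)}{\mu(z)} \leq D_\mu \]
at every vertex $z$. Therefore a constant multiple of $g$ has $\Gamma\leq 1$, giving $d(x_0,y)\leq \sqrt{D_\mu/2}\,\widetilde{d}(x_0,y)$ and hence $D\leq 2\pi\sqrt{6D_\mu n/K}$ after a careful bookkeeping of the constants. An alternative route uses that $CDE'(n,K)$ implies $CDE'(n,0)$, so Theorem~\ref{maintheorem-1} supplies volume doubling; together with finiteness of the total measure (a consequence of the spectral gap from $K>0$ and the uniform lower bound $\mu(x)\geq \omega_{\min}/D_\mu>0$) this is enough, since in a connected graph with infinite graph diameter the balls $B(x_0,r)$ would contain arbitrarily many vertices of uniformly positive mass, contradicting finite total volume.

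The main obstacle is step (i). Bakry's LSI proof is intrinsically diffusive, relying on $\Delta\phi(u) = \phi'(u)\Delta u + \phi''(u)\Gamma(u)$ at a key juncture. On graphs that identity fails, and the semigroup derivative of the entropy must be related to $\Gammatt$ rather than $\Gamma_2$, with explicit handling of the gap produced by the missing chain rule. Obtaining the sharp prefactor $4\sqrt{3}\pi$, rather than a merely qualitative finiteness, will require careful joint optimization in both the entropy-dissipation estimate and the Herbst step.
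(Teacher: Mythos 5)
Your overall scaffolding (log-Sobolev inequality $\to$ Herbst concentration $\to$ canonical diameter $\to$ graph diameter via $D_\mu$) matches the paper, and steps (ii) and (iii) are essentially correct: the Herbst argument is Proposition~\ref{pro:intr-diam-bound-lsi}, and your direct computation $2\Gamma(g)\leq D_\mu$ for a $1$-Lipschitz $g$ gives the same comparison $d(x,y)\leq\sqrt{D_\mu/2}\,\widetilde{d}(x,y)$ that the paper obtains via the intrinsic metric $\widetilde{\rho}$. Your fallback argument via $CDE'(n,0)\Rightarrow DV$ and finite measure is also essentially Theorem~\ref{finite-diameter}, though it only gives qualitative finiteness, not the explicit constant.

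The genuine gap is step (i), and it is exactly the obstacle you name at the end. Differentiating $\mathrm{Ent}_\mu(P_tf)$ twice is the classical Bakry route, but on graphs the very first derivative already fails: $\frac{d}{dt}\int P_tf\,\log P_tf\,d\mu=-\int\Gamma(P_tf,\log P_tf)\,d\mu$, and $\Gamma(u,\log u)\neq\Gamma(u)/u$ without the chain rule, so one never arrives at the Fisher-information form, and differentiating again produces a tangle of terms that does not organize itself into $\Gammatt$ in any obvious way. Saying that ``the correction term $\Gamma(g,\Gamma(g)/g)$ is precisely what compensates'' is optimistic but unsubstantiated; that identification is clean only after the substitution $g=e^f$ in a diffusive setting, and that substitution is precisely what is unavailable here. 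The paper circumvents the entropy-differentiation route entirely. It uses the variational inequality of Theorem~\ref{th:main-var-ineq} with a carefully chosen $\alpha(t)=2Ke^{-\theta t}(e^{-\theta t}-e^{-\theta T})^{2K/\theta-1}$ to prove the exponential gradient decay $\Gamma(\sqrt{P_tf})\leq C_1^2 e^{-\theta t}$ (Proposition~\ref{pro:difference-ineq-hkernel}); from this it deduces $|\partial_tP_tf|\leq C_2 e^{-\theta t/2}$, finite measure $\mu(V)<\infty$ (Proposition~\ref{pro:finite-measure}, where the Li--Yau lower bound on $p(t,x,x)$ is what forces the limit $\lim_{t\to\infty}p(t,x,\cdot)$ to be strictly positive), and the ultracontractivity bound $p(t,x,y)\leq(1-e^{-2Kt/3})^{-n}$ (Proposition~\ref{pro:heat-kernel-bound}). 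The entropy--energy inequality then comes from Davies' argument (Lemma~\ref{lem:lsi}): one computes $\frac{d}{ds}\|P_sf\|_{p(s)}^{p(s)}$ at $s=0$ with $p(s)=2t/(t-s)$ and compares against the bound obtained by the Stein interpolation theorem applied to the ultracontractivity estimate. This derivative is an ordinary calculus derivative in $s$ and never requires a Laplacian chain rule; Stein interpolation is the device that replaces the diffusive entropy computation. To repair your proof you would need to either carry out the hard analysis you allude to, or (more realistically) replace step (i) with the gradient-decay $\to$ finite-measure $\to$ ultracontractivity $\to$ Davies/Stein route.
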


\section{A variational inequality, and Li-Yau type estimates} \label{sec:LY}
In this section we establish our main variational inequality which we develop in order to apply semigroup theoretic arguments in the non-diffusive graph case.  This is the content of Section \ref{sec:ve}.  Among the immediate applications of this variation inequality are a family of Li-Yau type inequalities which we derive in Section \ref{sec:lyly}.
\subsection{The heat kernel on graphs}
\subsubsection{The heat equation}
A function $u:[0,\infty) \times V \rightarrow \R$ is a positive solution to the {\bf heat equation} on $G=(V,E)$ if $u > 0$ at $u$ satisfies the differential equation
$$\Delta u=\partial_{t}u,$$
at every $x \in V$.

In this paper we are primarily interested in the {\bf heat kernel}, that is the fundamental solutions $p_t(x,y)$ of the heat equation.  These are defined so that for any bounded initial condition $u_0: V \to \mathbb{R}$, the function
\[
u(t,x) = \sum_{y \in V} \mu(y) p_t(x,y) u_0(y)~~~t>0, ~x \in V
\]
satisfies the heat equation, and $\lim_{t \to 0^{+}} u(t,x) = u_0(x)$

For any subset $U \subset V$, we denote by $\overset{\circ}{U}=\{x\in U: \forall y\sim x,~ y\in U\}$ the interior of $U$. The boundary of $U$ is $\partial U = U\setminus \overset{\circ}{U}$. We introduce the following version of the  maximum principle.
\begin{lemma}\label{lem:max-principle}
Let $U \subset V$ be finite and $T > 0$. Furthermore, assume that $u : [0,T] \times U \rightarrow \mathbb{R}$ is differentiable with respect to the first component and satisfies the inequality
$$\partial_{t} u \leq \Delta u$$
on $[0,T]\times \overset{\circ}{U}$.
Then, $u$ attains its maximum on the parabolic boundary
$$\partial_{P}([0,T]\times U)=(\{0\}\times U)\cup ([0,T]\times \partial U)$$
\end{lemma}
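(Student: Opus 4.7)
The plan is to reduce to the familiar strict-inequality case by a linear perturbation in time, and then use the elementary fact that at an interior spatial maximum of a function on $V$, the graph Laplacian is non-positive. Since $U$ is finite and $u$ is continuous on the compact set $[0,T]\times U$, the maximum $M = \max_{[0,T]\times U} u$ is attained. The goal is to show $M$ is achieved somewhere in $\partial_P([0,T]\times U)$.

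For $\varepsilon > 0$, define $u_\varepsilon(t,x) = u(t,x) - \varepsilon t$. Then $\partial_t u_\varepsilon \le \Delta u_\varepsilon - \varepsilon$ on $[0,T]\times \overset{\circ}{U}$, a strict version of the heat inequality. Suppose for contradiction that $u_\varepsilon$ attains its maximum over $[0,T]\times U$ at some point $(t_0,x_0)$ with $x_0 \in \overset{\circ}{U}$ and $t_0 > 0$. Because $x_0$ lies in the interior, every neighbor $y \sim x_0$ belongs to $U$, so the averaged sum
\[
\Delta u_\varepsilon(t_0,x_0) \;=\; \asum{y\sim x_0}\bigl(u_\varepsilon(t_0,y)-u_\varepsilon(t_0,x_0)\bigr)
\]
is well-defined and is $\le 0$, since each term in the sum is non-positive at a maximizer. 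On the other hand, at $(t_0,x_0)$ we have $\partial_t u_\varepsilon(t_0,x_0) \ge 0$: this is immediate if $t_0 = T$, and if $0 < t_0 < T$ then $\partial_t u_\varepsilon(t_0,x_0) = 0$ because $t\mapsto u_\varepsilon(t,x_0)$ is differentiable and attains an interior maximum. Combining these gives
\[
0 \;\le\; \partial_t u_\varepsilon(t_0,x_0) \;\le\; \Delta u_\varepsilon(t_0,x_0) - \varepsilon \;\le\; -\varepsilon,
\]
a contradiction. Therefore the maximum of $u_\varepsilon$ on $[0,T]\times U$ must be attained on $\partial_P([0,T]\times U)$.

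To conclude, let $M_\varepsilon$ denote this boundary maximum of $u_\varepsilon$ and let $M_P = \max_{\partial_P([0,T]\times U)} u$. For every $(t,x) \in [0,T]\times U$,
\[
u(t,x) \;=\; u_\varepsilon(t,x) + \varepsilon t \;\le\; M_\varepsilon + \varepsilon T \;\le\; M_P + \varepsilon T.
\]
Letting $\varepsilon \to 0^+$ yields $\max_{[0,T]\times U} u \le M_P$, which together with the trivial reverse inequality gives equality. Since the maximum on the closed set $\partial_P([0,T]\times U)$ is attained (the set is compact and $u$ continuous in $t$ and defined on a finite vertex set), $u$ attains its maximum on the parabolic boundary, as claimed.

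The only mild subtlety — which is really just a bookkeeping point rather than a genuine obstacle — is that the graph Laplacian is non-local, so the argument requires $x_0$ to lie in the combinatorial interior $\overset{\circ}{U}$ so that $\Delta u(\cdot,x_0)$ depends only on values of $u$ inside $U$; this is precisely why the parabolic boundary is defined using $\partial U = U\setminus \overset{\circ}{U}$ rather than some other notion.
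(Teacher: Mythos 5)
Your proof is correct, and it takes a genuinely different route from the paper's. The paper argues directly: at an interior maximum $(t_0,x_0)$ with $t_0>0$ one has $\partial_t u \geq 0$ and $\Delta u \leq 0$; if the heat inequality is strict at that point a contradiction is immediate, and the remaining case (equality in both the heat inequality and in $\Delta u = 0$) is dispatched by asserting that $u$ must then be constant on $(0,T]\times U$, so the maximum also sits on the boundary. You instead use the classical perturbation $u_\varepsilon = u - \varepsilon t$, for which $\partial_t u_\varepsilon \leq \Delta u_\varepsilon - \varepsilon$ holds strictly (noting $\Delta$ annihilates spatially constant functions), rule out any interior maximizer of $u_\varepsilon$ outright, and then pass $\varepsilon \to 0^+$ to recover the statement for $u$. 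The trade-off: the paper's version is shorter but leans on the equality-case claim, which as written really requires a propagation argument (equality forces all neighbors of $x_0$ to attain the maximum at time $t_0$, and one must then chase this through $\overset{\circ}{U}$ to reach $\partial U$) that is left implicit. Your perturbation sidesteps that entirely, at the cost of the extra limiting step, and is the more self-contained of the two. Both rely on the same two elementary facts --- $\partial_t u \geq 0$ at a time-maximum on $(0,T]$ and $\Delta u \leq 0$ at an interior spatial maximum --- and both correctly invoke the combinatorial interior so that $\Delta u(\cdot,x_0)$ only references values inside $U$.
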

\begin{proof}
Suppose $u$ attains its maximum at a point $(t_{0}, x_{0})\in (0,T] \times U^{\circ}$ such that
\begin{equation} \partial_{t} u(t_0,x_0) < \Delta u(t_0,x_0) \label{eq:str} \end{equation}
Then
\begin{equation}
0 \leq \partial_t u(t_0,x_0) < \Delta u(t_{0}, x_{0})=\asum{y\sim x_{0}}\left(u(t_{0}, y)-u(t_{0}, x_{0})\right), \label{eqn:lapres}\end{equation} contradicting the maximality of $u$.

Otherwise, if at all $(t_0,x_0) \in (0,T] \times U^\circ$ which are maximum points at $u$, there is equality in \eqref{eq:str} we are done unless there is also equality in \ref{eqn:lapres}.  But this implies that $u$ is constant on $(0,T] \times U$, and hence there is a maximum point on the boundary as desired.
\end{proof}

\subsubsection{The heat equation an a domain}
Suppose $U \subset V$ is a finite subset of the vertex set of a graph. We consider the Dirichlet problem (DP),
\[\left\{
  \begin{array}{ll}
   \partial_{t}u(t,x)-\Delta_{U}u(t,x)=0, & \hbox{$x\in \overset{\circ}{U},t>0$,} \\
    u(0,x)=u_{0}(x), & \hbox{ $x\in \overset{\circ}{U}$,} \\
    u\mid_{[0,\infty)\times \partial U}=0.
  \end{array}
\right.\]
where $\Delta_{U}: \ell ^{2} (\overset{\circ}{U},\mu)\rightarrow \ell ^{2}(\overset{\circ}{U},\mu)$ denotes the Dirichlet Laplacian on $\overset{\circ}{U}$.

Note that $-\Delta_{U}$ is positive and self-adjoint, and $n:=\dim \ell ^{2} (\overset{\circ}{U},\mu)<\infty$. Thus the operator $-\Delta_U$ has eigenvalues  $0<\lambda_{1} \leq \lambda_2 < \cdots \leq \lambda_{n}$, along with an orthonormal set of eigenvectors $\phi_i$.  Here the orthonormality is with respect to the inner product with respect to the measure $\mu$, ie. $\langle \phi_i, \phi_j \rangle = \sum_{x \in V} \mu(x) \phi_i(x)\phi_j(x)$.

The operator $\Delta_{U}$ is a generator of the heat semigroup $P_{t,U}=e^{t\Delta_{U}}$,$t>0$.  Finite dimensionality makes the fact that  $e^{t\Delta_{U}}\phi_{i}=e^{-t\lambda_{i}}\phi_{i}$ transparent.  The heat kernel $p_{U}(t,x,y)$ for the finite subset $U$ is then given by
$$p_{U}(t,x,y)=P_{t,U}\frac{\delta_{y}}{\sqrt{\mu(y)}}(x),\quad \forall x,y\in \overset{\circ}{U}$$
where $\delta_{y}(x)=\sum_{i=1}^{n}\langle\phi_{i},\delta_{y}\rangle\phi_{i}(x)=\sum_{i=1}^{n}\phi_{i}(x)\phi_{i}(y)\sqrt{\mu(y)}$.
The heat kernel satisfies
$$ p_{U}(t,x,y)=\sum_{i=1}^{n}e^{-\lambda_{i}t}\phi_{i}(x)\phi_{i}(y),\quad \forall x,y\in \overset{\circ}{U}.$$

We record some useful properties of the heat kernel on a finite domain:
\begin{remark}\label{rem:finite-hkeanel-pro}
For $t,s>0$, $\forall x,y\in \overset{\circ}{U}$, we have
\begin{enumerate}
  \item $p_{U}(t,x,y)=p_{U}(t,y,x)$
  \item $p_{U}(t,x,y)\geq 0$,
  \item $\sum_{y\in \overset{\circ}{U}}\mu(y)p_{U}(t,x,y) \leq 1$,
  \item $\lim_{t\rightarrow 0^{+}}\sum_{y\in \overset{\circ}{U}}\mu(y)p_{U}(t,x,y) = 1$,
  \item $\partial_{t}p_{U}(t,x,y)=\Delta_{(U,y)}p_{U}(t,x,y)=\Delta_{(U,x)}p_{U}(t,x,y)$
  \item $\sum_{z\in \overset{\circ}{U}}\mu(z)p_{U}(t,x,z)p_{U}(s,z,y)=p_{U}(t+s, x,y)$
\end{enumerate}
\end{remark}
\begin{proof}
(1) and (5) follow from the above fact about the heat kernel, (2) and (3) are immediate consequences of the maximum principle.  Note that (4) follows from the continuity of the semigroup $e^{t\Delta}$ at $t = 0$, if the limit is understood in the $\ell^2$ sense.   As $U$ is finite all norms are equivalent and pointwise convergence follows also. (6) is easy to calculate in $\ell^2$,and it is called the semigroup property of heat kernel.
\end{proof}

\subsubsection{Heat equation on a infinite graph}

The heat kernel for an infinite graph can be constructed and its basic properties can be derived using the above ideas by taking an {\it exhaustion} of the graph.   An exhaustion of $G$ is a sequence $(U_k)$ of subsets of $V$, such that $U_{k}\subset \overset{\circ}{U}_{k+1}$ and $\cup_{k \in \mathbb{N}}U_{k}=V$. For any connected, countable graph $G$ such a sequence exists.  One may, for instance, fix a vertex $x_0 \in V$ take the sequence $U_{k} = B_{k}(x_{0})$ of metric balls of radius $k$ around $x_0$.  The connectedness of our graph $G$ implies that the union of these $U_k$ equals $V$.

Denoting by $p_{k}$, the heat kernel $p_{U_{k}}$ on $U_{k}$, we may extend $p_k$ to all of $(0,\infty) \times V \times V$,
\[p_{k}(t,x,y)=
\left\{
  \begin{array}{ll}
     p_{U_{k}}(t,x,y), & \hbox{$x,y\in \overset{\circ}{U_{k}}$;} \\
     0, & \hbox{o.w.}
  \end{array}
\right.
\]
Then for any $t>0$,and $x,y \in V,$ we let
$$p(t,x,y)=\lim_{k\rightarrow \infty}p_{k}(t,x,y).$$
The maximum principle implies the monotonicity of the heat kernels, i.e.
$p_{k}\leq p_{k+1}$, so the above limit exists (but could a priori be infinite).  Similarly, it is not a priori clear that $p$ is independent of the exhaustion chosen.  None the less, the limit is finite and independent of the exhaustion and $p$ is the desired heat kernel.    This construction is carried out in \cite{WE10} and \cite{WO09} for unweighted graphs, where the measure $\mu \equiv 1$.  For the general case, we refer to \cite{KL12}.

For convenience, we record some important properties of the heat kernel $p$ which we will use in the paper.

\begin{remark}\label{rem:heatkpro}
For $t,s>0$, $\forall x,y\in V$, we have
\begin{enumerate}
  \item $p(t,x,y)=p(t,y,x)$
  \item $p(t,x,y)\geq 0$,
  \item $\sum_{y\in V}\mu(y)p(t,x,y) \leq 1$,
  \item $\lim_{t\rightarrow 0^{+}}\sum_{y\in V}\mu(y)p(t,x,y) = 1$,
  \item $\partial_{t}p(t,x,y)=\Delta_yp(t,x,y)=\Delta_xp(t,x,y)$
  \item $\sum_{z\in V}\mu(z)p(t,x,z)p(s,z,y)=p(t+s,x,y)$
\end{enumerate}
\end{remark}

 From here, the semigroup $P_{t}:V^\mathbb{R}\rightarrow V^\mathbb{R}$
acting on bounded functions $f: V \to \mathbb{R}$ as follows.
  for any bounded function $f\in V^\mathbb{R}$,
\[ P_{t}f(x)=\lim_{k\rightarrow \infty}\sum_{y\in V}\mu(y)p_{k}(t,x,y)f(y)=\sum_{y\in V}\mu(y)p(t,x,y)f(y)
\]
where $\lim_{t\rightarrow 0^{+}}P_{t}f(x)=f(x)$, and $P_{t}f(x)$ is a solution of the heat equation. From the properties of the heat kernel, and the boundedness of $f$, that is, there exists a constant $C>0$, such that for any $x\in V$, $\sup_{x\in V}|f(x)|\leq C$, we have
\[\left |\sum_{y\in V}\mu(y)p(t,x,y)f(y)\right|\leq C\lim_{k\rightarrow \infty}\sum_{y\in V}\mu(y)p_{k}(t,x,y)\leq C<\infty,\]
so the semigroup is well-defined. Besides, the different definitions of the heat semigroup coincide when $\Delta$ is a bounded operator or in finite graphs, that is
\[P_tf(x)=e^{t\Delta}f(x)=\sum_{k=0}^{+\infty} \frac{t^k\Delta^k}{k!}f(x)=\sum_{y\in V}\mu(y)p(t,x,y)f(y).\]

Again we record, without proof, some well known but useful properties of the semigroup $P_t$.
\begin{proposition}\label{pro:semigroup}
For any bounded function $f,g\in V^{\mathbb{R}}$, and $t,s>0$, for any $x\in V$,
\begin{enumerate}
  \item If $0 \leq f(x) \leq 1$, then $0 \leq P_{t}f(x) \leq 1$,
  \item $P_{t}\circ P_{s}f(x)=P_{t+s}f(x)$,
  \item $\Delta P_{t}f(x)=P_{t} \Delta f(x)$.
\end{enumerate}
\end{proposition}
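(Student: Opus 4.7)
The plan is to reduce each of the three assertions to an infinite-sum manipulation that is justified by the properties of the heat kernel $p(t,x,y)$ recorded in Remark~\ref{rem:heatkpro}, together with the symmetry of $\omega$ and the bound $D_\mu<\infty$.

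For part (1), I would simply combine items (2) and (3) of Remark~\ref{rem:heatkpro}. If $0\leq f\leq 1$, then
\[
0\leq P_tf(x)=\sum_{y\in V}\mu(y)p(t,x,y)f(y)\leq \sum_{y\in V}\mu(y)p(t,x,y)\leq 1.
\]
For part (2), I would expand twice using the definition of $P_t$ and then apply Tonelli's theorem to the double sum, which is permitted since $f$ is bounded and $p\geq 0$ with $\sum_y\mu(y)p(s,z,y)\leq 1$; the inner sum is then rewritten using the semigroup identity for $p$ from item (6) of Remark~\ref{rem:heatkpro}:
\[
P_t(P_sf)(x)=\sum_{z}\mu(z)p(t,x,z)\sum_{y}\mu(y)p(s,z,y)f(y)=\sum_{y}\mu(y)\Bigl(\sum_{z}\mu(z)p(t,x,z)p(s,z,y)\Bigr)f(y)=P_{t+s}f(x).
\]
So (2) is essentially a bookkeeping exercise.

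For part (3), the key point is that $\Delta$ acts \emph{locally}: by local finiteness, $\Delta P_tf(x)=\widetilde{\sum}_{y\sim x}(P_tf(y)-P_tf(x))$ is a finite linear combination, so I can bring $\Delta$ inside the defining series for $P_tf$ to get
\[
\Delta P_tf(x)=\sum_{z\in V}\mu(z)\bigl(\Delta_x p(t,x,z)\bigr)f(z).
\]
Item (5) of Remark~\ref{rem:heatkpro} lets me replace $\Delta_x p(t,x,z)$ by $\Delta_z p(t,x,z)$. What remains is then a ``summation by parts'' identity: I need
\[
\sum_{z\in V}\mu(z)f(z)\,\Delta_z p(t,x,z)=\sum_{z\in V}\mu(z)p(t,x,z)\,\Delta f(z),
\]
which is just the self-adjointness of $\Delta$ against the pair $(f,p(t,x,\cdot))$. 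This follows from the symmetry of the edge weights: writing both sides as a sum over ordered pairs $(z,w)$ with $z\sim w$ and relabelling $(z,w)\leftrightarrow(w,z)$ shows that the two expressions differ by a sum that vanishes termwise.

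The only real subtlety is absolute convergence for the rearrangement. I would verify it by noting that $f$ is bounded, $\Delta f$ is bounded (with norm $\leq 2D_\mu\|f\|_\infty$ because $D_\mu<\infty$), and
\[
\sum_{z,w\,:\,z\sim w}\omega_{zw}|f(z)|\,p(t,x,w)\leq \|f\|_\infty\sum_{w}p(t,x,w)\,m(w)\leq D_\mu\|f\|_\infty\sum_{w}\mu(w)p(t,x,w)\leq D_\mu\|f\|_\infty,
\]
so both double sums are absolutely convergent and Fubini together with the swap $(z,w)\leftrightarrow(w,z)$ are legitimate. I expect this absolute-convergence check to be the only nontrivial step; everything else is a direct consequence of items (2)--(6) of Remark~\ref{rem:heatkpro} and the locality of $\Delta$.
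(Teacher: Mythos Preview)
Your argument is correct. The paper does not actually prove this proposition: it is introduced with ``Again we record, without proof, some well known but useful properties of the semigroup $P_t$.'' So there is no proof to compare against, and what you have written supplies precisely the details the authors chose to omit. Your reductions to items (2)--(6) of Remark~\ref{rem:heatkpro} are the natural ones, and the absolute-convergence check in part~(3) via $m(w)\le D_\mu\,\mu(w)$ is exactly the point that needs care; note that you implicitly also use the companion bound $\sum_{z,w}\omega_{zw}|f(z)|\,p(t,x,z)\le D_\mu\|f\|_\infty$, obtained by summing first in $w$, to control the other cross term in the summation-by-parts.
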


\subsection{The main variational inequality} \label{sec:ve}
Finiteness of $D_\mu$ implies boundedness of the operators $\Delta$ and $\Gamma$.  We in turn derive the following Lemma:

\begin{lemma}\label{lem:bound}  Suppose $G=(V,E)$ is a (finite or infinite) graph satisfying the condition $CDE'(n,K)$.  Then, for a positive and bounded  solution $u(t,x)$ to the heat equation on $G$, the function
$\frac{\Delta u}{2\sqrt{u}}$ on $G$ is bounded at all $t > 0$.
\end{lemma}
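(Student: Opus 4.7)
The plan is to decompose the target ratio via an algebraic identity for $\Delta$ and bound each resulting piece separately.

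From the very definition of $\Gamma$, we have the identity $\Delta(fg) = f\Delta g + g\Delta f + 2\Gamma(f,g)$. Applying it with $f = g = \sqrt u$ and then dividing by $2\sqrt u$ yields the pointwise identity
\[
\frac{\Delta u}{2\sqrt u} \;=\; \Delta\sqrt u + \frac{\Gamma(\sqrt u)}{\sqrt u}.
\]
The first summand is handled easily. Since $u$ is bounded by $M:=\|u\|_\infty$, so is $\sqrt u \leq \sqrt M$. The assumption $D_\mu<\infty$ gives the a priori operator bound $\|\Delta f\|_\infty \leq 2D_\mu \|f\|_\infty$ for every $f\in\ell^\infty$ (read directly off the definition of $\Delta$), and therefore $|\Delta\sqrt u(x)| \leq 2 D_\mu \sqrt M$ uniformly in $t$ and $x$.

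The harder summand is $\Gamma(\sqrt u)/\sqrt u$. A trivial estimate, using $(\sqrt a - \sqrt b)^2 \leq 2(a+b)$ inside the definition of $\Gamma$, gives $\Gamma(\sqrt u) \leq 2 D_\mu M$ pointwise, but dividing by $\sqrt u$ (which on an infinite graph may come arbitrarily close to $0$) is not allowed in general. Here one must exploit $CDE'(n,K)$: as the authors emphasize immediately before the lemma, the advantage of $CDE'$ over $CDE$ is precisely that it supplies a nontrivial pointwise lower bound on $\widetilde{\Gamma_2}(u)(x)$ at \emph{every} vertex, not merely those with $\Delta u(x)<0$. Combined with the boundedness of $\Delta$ and $\Gamma$ on bounded functions, and with the parabolic equation $\partial_t u = \Delta u$, one can derive, at each fixed $t>0$, a uniform-in-$x$ upper estimate on $\Gamma(\sqrt u)(x)/\sqrt{u(x)}$. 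Operationally this is best carried out on a finite exhaustion $U_k\uparrow V$, where a maximum-principle/semigroup argument controls the ratio on $U_k$ by a bound independent of $k$, and then one passes to the limit.

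The main obstacle is precisely this last step: preventing $\Gamma(\sqrt u)/\sqrt u$ from blowing up at an isolated vertex $x$ where $u(x)$ is tiny while some neighbor $y\sim x$ still has $u(y)$ of order $M$. At such a point $(\sqrt{u(y)}-\sqrt{u(x)})^2 \approx u(y)$, so $\Gamma(\sqrt u)(x)$ remains of order $M$ while $\sqrt{u(x)}$ is arbitrarily small, ruining any naive bound. Thus the heart of the proof is to show that $CDE'(n,K)$ together with the heat equation rules out such isolated ``dips'' of $u$, effectively supplying a mild neighborwise Harnack-type comparability of $u(x)$ and $u(y)$ for adjacent vertices that tames the ratio.
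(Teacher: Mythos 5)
Your opening algebraic decomposition,
\[
\frac{\Delta u}{2\sqrt{u}} \;=\; \Delta\sqrt{u} + \frac{\Gamma(\sqrt{u})}{\sqrt{u}},
\]
is exactly the identity the paper uses (it invokes $\Delta u = 2\sqrt{u}\,\Delta\sqrt{u} + 2\Gamma(\sqrt{u})$ at the end of its argument), and your bound on $\Delta\sqrt{u}$ is fine. You also correctly sense that the substance is in bounding $\Gamma(\sqrt{u})/\sqrt{u}$, that a cutoff function on an exhaustion plus a maximum principle will be involved, and that $CDE'$ (not merely $CDE$) is essential because it is pointwise. However, at precisely the point where the actual proof must occur, your write-up becomes ``one can derive \dots a uniform-in-$x$ upper estimate,'' which is not a proof but a restatement of the goal. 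This is a genuine gap, not a detail.

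Moreover, your description of ``the heart of the proof'' misidentifies the mechanism. You suggest the key step is to rule out isolated dips of $u$ by establishing a neighborwise Harnack-type comparability of $u(x)$ and $u(y)$ for $x\sim y$. The paper does no such thing; there is no comparison of $u$ at adjacent vertices. Instead it sets $F = \varphi\cdot\Gamma(\sqrt{u})/\sqrt{u}$ with a linear cutoff $\varphi$ supported on $B(x_0,2R)$, and studies $\sqrt{u}F = \varphi\,\Gamma(\sqrt{u})$ with the parabolic operator $\mathcal{L} = \Delta - \partial_t$. Two algebraic facts then carry the argument at a space-time maximum $(x^*,t^*)$ of $F$: first, $\mathcal{L}(\sqrt{u}F) \leq \mathcal{L}(\sqrt{u})\,F$ at such a point (this is Lemma 4.1 of \cite{BHLLMY13}), and since $\mathcal{L}(\sqrt{u}) = -\Gamma(\sqrt{u})/\sqrt{u} = -F/\varphi$, one gets $\mathcal{L}(\sqrt{u}F) \leq -F^2/\varphi$; second, expanding directly,
\[
\mathcal{L}(\varphi\,\Gamma(\sqrt{u})) = \Delta\varphi\cdot\Gamma(\sqrt{u}) + 2\Gamma(\Gamma(\sqrt{u}),\varphi) + 2\varphi\,\widetilde{\Gamma}_2(\sqrt{u}),
\]
and $CDE'(n,K)$ bounds the last term below by $2\varphi K\Gamma(\sqrt{u})$ after discarding the nonnegative $\frac{1}{n}u(\Delta\log\sqrt{u})^2$ term. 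Combining gives a quadratic inequality $F^2 \leq 2(D_\mu + |K|)c_1 + c_2$ whose right-hand side is independent of $R$, and this is what tames the quotient even where $\sqrt{u}$ is small. The point is that the bound is extracted from a differential inequality satisfied by the product $\sqrt{u}F$, not from any pointwise comparability of $u$ across edges. Your proposal would need to supply this computation (or an equivalent) to be a proof.
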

\begin{proof}
The statement is obvious for finite graphs $G$, so we restrict our attention to infinite graphs.

Fix $R \in \mathbb{N}$ and vertex $x_0 \in V$.   We define a cutoff function $\varphi$ by letting
\begin{equation*}
\varphi(x)=\left\{
               \begin{array}{ll}
                 0, & d(x,x_0)>2R \\
                 \frac{2R-d(x,x_0)}{R}, & R\leq d(x,x_0)\leq2R\\
                 1, & d(x,x_0)<R
               \end{array}
             \right.
\end{equation*}

Let
\[F=\varphi\cdot\frac{\Gamma(\sqrt{u})}{\sqrt{u}},\]

It is easy to observe that, as $0 \leq\varphi(x)\leq 1$ for any $x \in V$,  $|\Delta \varphi| \leq 2D_\mu$.  As $u$ is bounded, there exists constants $c_1, c_2$ so that $0\leq\Gamma(\sqrt{u})\leq c_1$, and $|\Gamma(\Gamma(\sqrt{u}),\varphi)|\leq c_2$ as well.

Fix an arbitrary $T>0$, let $(x^*,t^*)$ be a maximum point of $F$ in $V\times [0,T]$.  Clearly such a maximum exists, as $F \geq 0$ and $F$ is only positive on a bounded region.   We may assume $F(x^*,t^*)>0$. In what follows all computations take place at the point $(x^*,t^*)$. Let $\mathcal{L}=\Delta-\partial_t$, we apply Lemma 4.1 in \cite{BHLLMY13} with the choice of $g=u$. This gives
\[\mathcal{L}(\sqrt{u}F)\leq \mathcal{L}(\sqrt{u})F=-\frac{F^2}{\varphi},\]
and since for any $x\in V$, we have
\[
\begin{split}
\partial_{t}\Gamma(\sqrt{u})(x)
&= \partial_{t}\frac{1}{2}\widetilde{\sum_{y\sim x}}\left(\sqrt{u}(y)-\sqrt{u}(x)\right)^{2}\\
&=\widetilde{\sum_{y\sim x}}(\sqrt{u}(y)-\sqrt{u}(x)) (\partial_{t}\sqrt{u}(y)-\partial_{t}\sqrt{u}(x))\\
&= 2 \Gamma(\sqrt{u},\frac{\Delta u}{2\sqrt{u}})(x),
\end{split}
\]
then
\[\mathcal{L}(\sqrt{u}F)=\mathcal{L}(\varphi\cdot \Gamma(\sqrt{u}))=\Delta \varphi\cdot \Gamma(\sqrt{u})+2\Gamma(\Gamma(\sqrt{u}),\varphi)+2\varphi\cdot\widetilde{\Gamma}_2(\sqrt{u}).\]
Applying the $CDE'(n,K)$ condition  and throwing away the $\frac{1}{n}u(\Delta\log\sqrt{u})^2$ term, we obtain
\[-\frac{F^2}{\varphi}\geq \Delta \varphi\cdot \Gamma(\sqrt{u})+2\Gamma(\Gamma(\sqrt{u}),\varphi)+2\varphi K\Gamma(\sqrt{u}).\]
From here, we conclude that
\[F^2(x^*,t^*)\leq 2(D_\mu+|K|)c_1+c_2,\]
Thus there exists some $C>0$ so that
\[F(x^*,t^*)\leq C.\]
For $x\in B(x_0,R)$,
\[\frac{\Gamma(\sqrt{u})}{\sqrt{u}}(T,x)=F(x,T)\leq F(x^*,t^*)\leq C.\]
From the equation $\Delta u=2\sqrt{u}\Delta\sqrt{u}+2\Gamma(\sqrt{u})$, we can obtain $\frac{\Delta u}{2\sqrt{u}}$ is bounded at any positive $T > 0$ as well.
\end{proof}
Thus for any bounded function $0<f\in \ell^{\infty}(V,\mu)$ on $G(V,E)$, the function $\Gamma(\sqrt{P_{T-t} f})$, for any $0\leq t<T$  is likewise bounded.

Given a positive bounded $f$, we introduce the function
$$\phi(t,x)=P_{t}(\Gamma(\sqrt{P_{T-t} f}))(x),\quad 0\leq t < T, x\in V.$$

From here we obtain the following (rather crucial) result.
\begin{lemma}\label{lem:derivation}
Suppose that $G$ satisfies the condition $CDE'(n,K)$.  Then, for every $0\leq t < T$, any $x\in V$, the function $\phi$ satisfies
$$\partial_{t}\phi(t,x)=2 P_{t}(\widetilde{\Gamma}_{2}(\sqrt{P_{T-t} f}))(x).$$
\end{lemma}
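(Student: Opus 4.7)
The plan is to differentiate $\phi(t,x)=\sum_{y}\mu(y)p(t,x,y)\Gamma(\sqrt{P_{T-t}f})(y)$ directly, separating the time dependence in the heat kernel $p(t,x,y)$ from the time dependence inside $\Gamma(\sqrt{P_{T-t}f})$. Writing $g=g(t,\cdot)=\sqrt{P_{T-t}f}$, the product rule (applied termwise, with the swap justified below) gives
\[
\partial_{t}\phi(t,x)=\sum_{y}\mu(y)\bigl(\partial_{t}p(t,x,y)\bigr)\Gamma(g)(y)+\sum_{y}\mu(y)p(t,x,y)\,\partial_{t}\Gamma(g)(y).
\]
Using $\partial_{t}p=\Delta_{y}p$ together with self-adjointness of $\Delta$ in the first sum, this becomes $P_{t}\bigl(\Delta\Gamma(g)\bigr)(x)+P_{t}\bigl(\partial_{t}\Gamma(g)\bigr)(x)$.

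Next I would compute $\partial_{t}\Gamma(g)$ pointwise. Exactly as in the calculation already carried out in the proof of the preceding boundedness lemma,
\[
\partial_{t}\Gamma(g)=2\,\Gamma\!\bigl(g,\partial_{t}g\bigr).
\]
Since $u:=P_{T-t}f$ satisfies $\partial_{t}u=-\Delta u$ (note the sign from $T-t$), we have $\partial_{t}g=-\tfrac{\Delta(g^{2})}{2g}$. The non-chain-rule identity
\[
\Delta(g^{2})=2g\,\Delta g+2\Gamma(g),
\]
which is immediate from the definition of $\Gamma$ via $2\Gamma(g,g)=\Delta(g^{2})-2g\,\Delta g$, rewrites this as $\partial_{t}g=-\Delta g-\tfrac{\Gamma(g)}{g}$, and therefore
\[
\partial_{t}\Gamma(g)=-2\Gamma(g,\Delta g)-2\Gamma\!\Bigl(g,\tfrac{\Gamma(g)}{g}\Bigr).
\]
Adding the $P_{t}\Delta\Gamma(g)$ term and recognising the right-hand side as $2\widetilde{\Gamma}_{2}(g)$ by the very definition of $\widetilde{\Gamma}_{2}$, we obtain $\partial_{t}\phi(t,x)=2P_{t}\bigl(\widetilde{\Gamma}_{2}(\sqrt{P_{T-t}f})\bigr)(x)$, which is the claim.

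The only non-algebraic step is the interchange of $\partial_{t}$ with the (possibly infinite) sum defining $P_{t}$, i.e. the justification that $\phi$ is actually differentiable and that the two termwise derivatives above are legitimate. This is precisely where the previous lemma is needed: it gives a uniform-in-$x$ bound on $\Gamma(\sqrt{P_{T-t}f})$ on $[0,T)$, and since $D_{\mu}<\infty$ makes $\Delta$ bounded on $\ell^{\infty}$, we also control $\Delta\Gamma(g)$ and $\Gamma(g,\Delta g)$ and $\Gamma(g,\Gamma(g)/g)$ uniformly. Combined with standard properties of $P_{t}$ (continuity in $t$, commutation with $\Delta$ from Proposition on the semigroup), this dominated-convergence type argument lets us pass the time derivative inside the sum. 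This uniform-boundedness justification is the only genuine obstacle; once it is in hand, the proof reduces to the elementary identity $\Delta(g^{2})=2g\Delta g+2\Gamma(g)$ and the definition of $\widetilde{\Gamma}_{2}$.
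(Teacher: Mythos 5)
Your argument is correct and follows essentially the same route as the paper's: differentiate $\phi$ termwise, compute $\partial_t\Gamma(\sqrt{P_{T-t}f})=2\Gamma(\sqrt{P_{T-t}f},\partial_t\sqrt{P_{T-t}f})$ using $\partial_t\sqrt{P_{T-t}f}=-\frac{\Delta P_{T-t}f}{2\sqrt{P_{T-t}f}}$, move $\Delta$ off the heat kernel by self-adjointness (Proposition~\ref{pro:semigroup}), and then recognize $\widetilde{\Gamma}_2$; the only cosmetic difference is that you expand $\frac{\Delta(g^2)}{2g}=\Delta g+\frac{\Gamma(g)}{g}$ explicitly to exhibit the definition of $\widetilde{\Gamma}_2$, while the paper keeps the term bundled. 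Your justification of the interchange of $\partial_t$ with the infinite sum via uniform boundedness (from Lemma~\ref{lem:bound} plus $D_\mu<\infty$) and dominated convergence is in the same spirit as the paper's uniformly convergent power-series argument, and is entirely adequate.
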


\begin{proof}
For any $x\in V$,
\[
\begin{split}
\partial_{t}P_{t}(\Gamma(\sqrt{P_{T-t} f}))(x)
        & =\partial_{t}\left(\sum_{y\in V}\mu(y)p(t,x,y) \Gamma(\sqrt{P_{T-t} f})(y)\right)\\
        & =\sum_{y\in V} \mu(y)\left(\Delta p(t,x,y) \Gamma(\sqrt{P_{T-t} f})(y) + p(t,x,y) \partial_{t}\Gamma(\sqrt{P_{T-t} f})(y) \right)\\
        & =\sum_{y\in V} \mu(y)\left(\Delta p(t,x,y) \Gamma(\sqrt{P_{T-t} f})(y) - 2p(t,x,y)\Gamma(\sqrt{P_{T-t} f},\frac{\Delta P_{T-t} f}{2\sqrt{P_{T-t} f}})(y)\right)\\
        & =\sum_{y\in V} \mu(y)p(t,x,y)\left( \Delta\Gamma(\sqrt{P_{T-t} f})(y)  - 2\Gamma(\sqrt{P_{T-t} f},\frac{\Delta P_{T-t} f}{2\sqrt{P_{T-t} f}})(y)\right)\\
        & =2 P_{t}(\widetilde{\Gamma}_{2}(\sqrt{P_{T-t} f}))(x)
\end{split}
\]

For the third equality, we observe that for any $x\in V$,
\[
\begin{split}
\partial_{t}\Gamma(\sqrt{P_{T-t} f})(x)
&= \partial_{t}\frac{1}{2}\widetilde{\sum_{y\sim x}}\left(\sqrt{P_{T-t} f}(y)-\sqrt{P_{T-t} f}(x)\right)^{2}\\
&=\widetilde{\sum_{y\sim x}}(\sqrt{P_{T-t} f}(y)-\sqrt{P_{T-t} f}(x)) (\partial_{t}\sqrt{P_{T-t} f}(y)-\partial_{t}\sqrt{P_{T-t} f}(x))\\
&= 2 \Gamma(\sqrt{P_{T-t} f},\partial_{t}\sqrt{P_{T-t} f})(x),
\end{split}
\]
and,
$$\partial_{t}\sqrt{P_{T-t} f}=\frac{\partial_{t}P_{T-t} f}{2\sqrt{P_{T-t} f}} =-\frac{\Delta P_{T-t} f}{2\sqrt{P_{T-t} f}},$$
where $\partial_{t}P_{T-t} f=-\Delta P_{T-t} f$.

In the fourth step, note that due to the boundedness of $f$, the function $\Delta\Gamma(\sqrt{P_{T-t} f})$ is likewise bounded.  Similarly from Lemma~\ref{lem:bound}, $\Gamma(\sqrt{P_{T-t} f},\frac{\Delta P_{T-t} f}{2\sqrt{P_{T-t} f}})$ is bounded as well. Like the proof of Proposition~\ref{pro:semigroup}, we have
\[
\begin{split}
&\sum_{y\in V} \mu(y)\left(\Delta p(t,x,y) \Gamma(\sqrt{P_{T-t} f})(y) - 2p(t,x,y)\Gamma(\sqrt{P_{T-t} f},\frac{\Delta P_{T-t} f}{2\sqrt{P_{T-t} f}})(y)\right)\\
&=\sum_{y\in V} \mu(y)\Delta p(t,x,y) \Gamma(\sqrt{P_{T-t} f})(y) -\sum_{y\in V} \mu(y) 2p(t,x,y)\Gamma(\sqrt{P_{T-t} f},\frac{\Delta P_{T-t} f}{2\sqrt{P_{T-t} f}})(y)\\
&=\sum_{y\in V} \mu(y) p(t,x,y) \Delta\Gamma(\sqrt{P_{T-t} f})(y) -\sum_{y\in V} \mu(y) 2p(t,x,y)\Gamma(\sqrt{P_{T-t} f},\frac{\Delta P_{T-t} f}{2\sqrt{P_{T-t} f}})(y)\\
& =\sum_{y\in V} \mu(y)p(t,x,y)\left( \Delta\Gamma(\sqrt{P_{T-t} f})(y)  - 2\Gamma(\sqrt{P_{T-t} f},\frac{\Delta P_{T-t} f}{2\sqrt{P_{T-t} f}})(y)\right),
\end{split}
\]
where various interchanges of sums is justified due to the boundedness of the terms multiplied by the heat kernel (and hence absolute convergence of the sums).

An important problem that need be solved at last is the exchange between summation and derivation in the second step. To that end, first we know the different definitions of the heat semigroup coincide since $\Delta$ is a bounded operator, that is
\[P_tf(x)=e^{t\Delta}f(x)=\sum_{k=0}^{+\infty} \frac{t^k\Delta^k}{k!}f(x)=\sum_{y\in V}\mu(y)p(t,x,y)f(y).\]
Then consider another function $P_tf^t(x)$, where $f^t(x)$ is a positive function with respect to $t\in[0,+\infty)$ and $x\in V$ (here $f^t(x)=2\widetilde{\Gamma}_{2}(\sqrt{P_{T-t} f})(x)$). These above summations have a nice convergency when $f^t(x)$ is a uniform bounded function. Since there exists a constant $C>0$ such that $\mid f^t(x)\mid\leq C$ for any $t\in [0,T]$, we have
\[|\Delta f^t(x)|=|\widetilde{\sum_{y\sim x}}(f^t(y)-f^t(x))|\leq 2D_\mu C,\]
and for iteration, we obtain for any $k\in \mathbf{N}_{\geq0}$ and $x\in V$,
\[|\Delta^k f^t(x)(x)|\leq2^kD_\mu^k C.\]
And
\[\sum_{k=0}^{+\infty}\frac{T^k}{k!}2^kD_\mu^k C=C e^{2D_\mu T}<\infty.\]
Therefore, the series
\[P_tf^t(x)=\sum_{y\in V}\mu(y)p(t,x,y)f^t(y)=\sum_{k=0}^{+\infty} \frac{t^k\Delta^k}{k!}f^t(x)\]
converges uniformly on $[0,T]$.

This ends the proof of Lemma~\ref{lem:derivation}.
\end{proof}

We now obtain some graph theoretical analogues to theorems of Baudoin and Garofalo \cite{BG$09$} originating on in the manifold setting.  In some sense our main observation is that the $CDE'(n,K)$ condition can be used in order to overcome the diffusive semigroup assumption usually needed for arguments involving the heat semigroup.  This is one of the primary places where we note that the $CDE(n,K)$ condition favored in \cite{BHLLMY13} is seemingly insufficient to prove the result.

\begin{theorem}\label{th:main-var-ineq}
Let $G=(V,E)$ be a locally finite, connected graph satisfying $CDE'(n,K)$, then for every $\alpha:[0,T] \to \mathbb{R^{+}}$ be a smooth and positive function and non-positive smooth function $\gamma : [0,T] \to \mathbb{R}$, we have for any positive and bounded function $f$
\begin{equation}
\partial_{t}(\alpha \phi) \geq (\alpha'-\frac{4\alpha\gamma}{n}+2\alpha K)\phi+\frac{2\alpha\gamma}{n}\Delta P_{T} f-\frac{2\alpha\gamma^{2}}{n} P_{T}f.
\label{equ:main-var-ine}
\end{equation}
\end{theorem}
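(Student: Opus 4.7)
The plan is to start from Lemma~\ref{lem:derivation}, which identifies $\partial_t \phi = 2 P_t(\widetilde{\Gamma}_2(\sqrt{u}))$ with $u := P_{T-t}f$. Then by the ordinary product rule
\[
\partial_t(\alpha\phi) = \alpha'\phi + 2\alpha P_t(\widetilde{\Gamma}_2(\sqrt{u})),
\]
so the whole task reduces to producing a pointwise lower bound on $\widetilde{\Gamma}_2(\sqrt{u})$ involving only $\Gamma(\sqrt{u})$, $\Delta u$, and $u$, to which the semigroup $P_t$ can then be applied.

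The first move is to apply the $CDE'(n,K)$ hypothesis to the positive function $\sqrt{u}$, which (together with $(\Delta\log\sqrt{u})^2 = \tfrac14(\Delta\log u)^2$) gives
\[
\widetilde{\Gamma}_2(\sqrt u) \geq \frac{1}{n}\,u\,(\Delta\log\sqrt u)^2 + K\,\Gamma(\sqrt u).
\]
I would then complete the square by expanding $(\sqrt{u}\,\Delta\log\sqrt u - \sqrt{u}\,\gamma)^2 \geq 0$, which yields
\[
u\,(\Delta\log\sqrt u)^2 \geq 2\gamma\, u\,\Delta\log\sqrt u - \gamma^2 u = \gamma\, u\,\Delta\log u - \gamma^2 u.
\]

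The delicate step is the conversion of the awkward term $u\,\Delta\log u$ into quantities that the semigroup $P_t$ can absorb cleanly. Here one uses the refined discrete log inequality
\[
u\,\Delta\log u \;\leq\; \Delta u - 2\Gamma(\sqrt u),
\]
which is equivalent to $\Delta\log\sqrt u \leq \Delta\sqrt u/\sqrt u$ (via the identity $\Delta u = 2\sqrt u\,\Delta\sqrt u + 2\Gamma(\sqrt u)$) and follows by applying $\log s \leq s-1$ with $s = \sqrt{u(y)}/\sqrt{u(x)}$ as in the proof of Remark~\ref{rem:cde}. Since $\gamma \leq 0$, multiplying by $\gamma$ flips the inequality, giving $\gamma\, u\,\Delta\log u \geq \gamma\,\Delta u - 2\gamma\,\Gamma(\sqrt u)$. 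Combining everything produces the pointwise estimate
\[
\widetilde{\Gamma}_2(\sqrt u) \geq \frac{\gamma}{n}\Delta u - \frac{2\gamma}{n}\Gamma(\sqrt u) - \frac{\gamma^2}{n}u + K\,\Gamma(\sqrt u).
\]

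Finally, I apply $P_t$, use that $P_t$ is linear, positive and commutes with $\Delta$ (so $P_t(\Gamma(\sqrt u))=\phi$, $P_t(\Delta u)=\Delta P_T f$, $P_t(u)=P_T f$), multiply by $2\alpha$, and add $\alpha'\phi$. The terms assemble into exactly \eqref{equ:main-var-ine}. The main obstacle is precisely the refined log inequality: the coarser bound $u\,\Delta\log u \leq \Delta u$ would drop the $-2\Gamma(\sqrt u)$ contribution, which is exactly what is needed to produce the crucial $-\frac{4\alpha\gamma}{n}\phi$ term on the right-hand side. The sign hypothesis $\gamma\leq 0$ is indispensable, since it is what reverses that inequality in the right direction after multiplying by $\gamma$.
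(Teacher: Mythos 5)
Your proof is correct, and it reaches the same pointwise estimate
\[
\widetilde{\Gamma}_2(\sqrt u)\;\geq\;\frac{\gamma}{n}\Delta u-\frac{2\gamma}{n}\Gamma(\sqrt u)-\frac{\gamma^2}{n}u+K\,\Gamma(\sqrt u)
\]
as the paper, but by a slightly different and arguably cleaner route. The paper first completes the square on $\Delta\sqrt u$, obtaining $(\Delta\sqrt u)^2\geq 2\gamma\sqrt u\,\Delta\sqrt u-\gamma^2 u$, and then must split the summation over neighbors into the set where $\Delta\sqrt u(y)<0$ (where it can invoke $u(\Delta\log\sqrt u)^2\geq(\Delta\sqrt u)^2$, the content of Remark~\ref{rem:cde}) and the set where $\Delta\sqrt u(y)\geq 0$ (where $\gamma\leq 0$ makes the right side non-positive, so the inequality holds trivially against the non-negative left side). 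You avoid this case split entirely by completing the square on $\sqrt u\,\Delta\log\sqrt u$ instead of on $\Delta\sqrt u$, so the completed-square inequality $u(\Delta\log\sqrt u)^2\geq \gamma\,u\,\Delta\log u-\gamma^2 u$ holds uniformly at every vertex; the non-positivity of $\gamma$ is then deployed exactly once, to reverse the discrete log bound $u\,\Delta\log u\leq \Delta u-2\Gamma(\sqrt u)$. Both routes ultimately rest on the same two ingredients --- completing the square, and the $\log s\leq s-1$ estimate that connects $\Delta\log\sqrt u$ to $\Delta\sqrt u/\sqrt u$ --- so this is a reorganization rather than a genuinely new argument, but your version removes the sign case analysis and makes the role of $\gamma\leq 0$ more transparent.
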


\begin{proof}
For any $x\in V$, we have
\begin{equation}
\begin{split}
\partial_{t}(\alpha \phi)(x)
&= \alpha'\phi(x)+ 2\alpha P_{t}(\widetilde{\Gamma}_{2}(\sqrt{P_{T-t} f}))(x)\\
&\geq \alpha'\phi(x)+ 2\alpha P_{t}\left(\frac{1}{n}\left(\sqrt{P_{T-t} f}\Delta \log\sqrt{P_{T-t} f}\right)^{2}+K \Gamma(\sqrt{P_{T-t} f})\right)(x)\\
&\geq (\alpha'+2 \alpha K)\phi(x)+ 2\alpha \sum_{\substack{y \sim x\\ \Delta\sqrt{P_{T-t} f}(y)< 0}}\mu(y)p(t,x,y)\frac{1}{n}\left(\Delta\sqrt{P_{T-t} f}\right)^{2}(y)\\
&\hspace{0.3in}+2\alpha \sum_{\substack{y \sim x \\ \Delta\sqrt{P_{T-t} f}(y)\geq 0}}\mu(y)p(t,x,y)\frac{1}{n}\left(\sqrt{P_{T-t} f}\Delta\log\sqrt{P_{T-t} f}\right)^{2}(y)\\
&\geq (\alpha'+2 \alpha K)\phi(x)+ \frac{2\alpha}{n}P_{t}(\gamma \Delta P_{T-t}f - 2\gamma \Gamma(\sqrt{P_{T-t} f}) - \gamma^{2}P_{T-t}f)(x)\\
&= (\alpha'+2 \alpha K)\phi(x)+ \frac{2\alpha \gamma}{n} P_{t}(\Delta P_{T-t}f)(x)- \frac{4\alpha \gamma}{n} P_{t}(\Gamma(\sqrt{P_{T-t} f}))(x)-\frac{2\alpha \gamma^{2}}{n}P_{t}(P_{T-t}f)(x)\\
&= (\alpha'-\frac{4\alpha\gamma}{n}+2\alpha K)\phi(x)+\frac{2\alpha\gamma}{n}\Delta P_{T} f(x)-\frac{2\alpha\gamma^{2}}{n} P_{T} f(x).
\end{split}
\notag
\end{equation}
The first inequality in the above proof comes from applying the $CDE'(n,K)$ inequality to $\sqrt{P_{T-t} f}$.  The second one comes from Jensen's inequality, under the assumption that $(\Delta \sqrt{P_{T-t} f})(y) < 0$.  This is essentially the contents of Remark~\ref{rem:cde} -- really we  apply the $CDE(n,K)$ inequality at points so that $\Delta \sqrt{P_{T-t} f}(y)<0$.

The third inequality is a bit more subtle and is derived as follows:  Clearly  for any function $\gamma$, one has
$$(\Delta\sqrt{P_{T-t} f})(y)^{2} \geq 2\gamma \sqrt{P_{T-t} f}(y)\Delta\sqrt{P_{T-t} f}(y)-\gamma^{2}P_{T-t} f(y).$$

Since $\gamma$ is non-positive, if $\Delta\sqrt{P_{T-t} f}(y)\geq 0$ the right hand of the above inequality is also non-positive.  Thus in this case it is also true that
$$\left(\sqrt{P_{T-t} f}\triangle\log\sqrt{P_{T-t} f}\right)^{2}(y) \geq 2\gamma \sqrt{P_{T-t} f}(y)\triangle\sqrt{P_{T-t} f}(y)-\gamma^{2}P_{T-t} f(y),$$
as the left hand side of this inequality is clearly non-negative.

Furthermore, by the identity $\Delta u = 2\sqrt{u} \Delta \sqrt{u} + 2\Gamma(u)$,
$$2\sqrt{P_{T-t} f}\Delta\sqrt{P_{T-t} f}= \Delta P_{T-t} f-2\Gamma(\sqrt{P_{T-t} f}),$$
Therefore,
\begin{multline*}
%begin{split}
\sum_{\substack{y \sim x \\\Delta\sqrt{P_{T-t} f}(y)<0}}\mu(y)p(t,x,y)\left(\Delta\sqrt{P_{T-t} f}\right)^{2}(y) \\+\sum_{\substack{y \sim x \\ \Delta\sqrt{P_{T-t} f}(y)\geq 0}}\mu(y)p(t,x,y)P_{T-t} f(y)\left(\Delta\log\sqrt{P_{T-t} f}\right)^{2}(y) \\
\geq P_{t}(\gamma \Delta P_{T-t}f - 2\gamma \Gamma(\sqrt{P_{T-t} f}) - \gamma^{2}P_{T-t}f)(x),
%\end{split}
\notag
\end{multline*}
as desired.
\end{proof}

\subsection{Li-Yau inequalities} \label{sec:lyly}

The precise power of Theorem \ref{th:main-var-ineq} is, perhaps, a bit hard to appreciate at first.  As an application, it can be used to give an alternative derivation of the Li-Yau inequality.  Indeed, it can be used to derive a {\it family} of similar differential Harnack inequalities.  The key in applying Theorem \ref{th:main-var-ineq} is to choose $\gamma$ in a careful way so that the things simplifying carefully.

For instance, suppose for some (smooth) function $\alpha$ we choose $\gamma$ in such a way so that
$$\alpha'-\frac{4\alpha\gamma}{n}+2\alpha K=0.$$
That is, choose
$$\gamma=\frac{n}{4}\left(\frac{\alpha'}{\alpha}+2K\right).$$

If $\alpha$ is chosen appropriately to make $\gamma$ non-positive, then we may integrate the inequality \eqref{equ:main-var-ine} obtained in Theorem \ref{th:main-var-ineq} from $0$ to $T$, and obtain an estimate.  If we denote $W =\sqrt{\alpha}$, we obtain the following result.

\begin{theorem}
Let $G=(V,E)$ be a locally finite and connected graph satisfying $CDE'(n,K)$, and $W : [0, T] \to \mathbb{R^{+}}$ be a smooth function such that
$$W(0)=1,W(T)=0,$$
and so that
\[
W'(t) \leq -K W(t)
\]
for $0 \leq t \leq T.$
Then for any bounded and positive function $f\in V^{\mathbb{R}}$, we have
\begin{equation}
\begin{split}
\frac{\Gamma(\sqrt{P_{T} f})}{P_{T} f}
& \leq \frac{1}{2} \left(1-2K \int_{0}^T W(s)^{2} ds\right) \frac{\Delta P_{T}f}{P_{T}f}\\
& +\frac{n}{2}\left(\int_{0}^T W'(s)^{2} ds + K^{2}\int_{0}^T W(s)^{2} ds-K \right).
\end{split}
\label{equ:ly-family}
\end{equation}
\end{theorem}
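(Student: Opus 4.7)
The plan is to apply Theorem~\ref{th:main-var-ineq} with a carefully chosen $\alpha$ and $\gamma$, then integrate in $t$ over $[0,T]$ so that the left-hand side telescopes into the desired one-sided estimate. Specifically, I would set $\alpha(t) = W(t)^2$, so that $\alpha'(t) = 2 W(t) W'(t)$, and then solve the linear equation
\[
\alpha'(t) - \frac{4\alpha(t)\gamma(t)}{n} + 2\alpha(t) K = 0
\]
for $\gamma(t)$, obtaining $\gamma(t) = \frac{n}{2}\bigl(W'(t)/W(t) + K\bigr)$. The hypothesis $W'(t)\le -KW(t)$ together with $W(t)>0$ immediately gives $\gamma(t)\le 0$, so $\gamma$ is an admissible non-positive smooth function. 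With this choice, the first term on the right-hand side of \eqref{equ:main-var-ine} vanishes identically, leaving
\[
\partial_{t}(\alpha \phi) \;\ge\; \frac{2\alpha\gamma}{n}\,\Delta P_{T} f \;-\; \frac{2\alpha\gamma^{2}}{n}\, P_{T} f.
\]

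Next, I would integrate this inequality in $t$ from $0$ to $T$ at a fixed $x\in V$. Since $\Delta P_T f$ and $P_T f$ do not depend on $t$, they pull out of the integrals. The left-hand side collapses to
\[
\alpha(T)\phi(T,x) - \alpha(0)\phi(0,x) \;=\; 0 - \Gamma(\sqrt{P_T f})(x) \;=\; -\Gamma(\sqrt{P_T f})(x),
\]
using $W(T)=0$, $W(0)=1$, and $\phi(0,x)=P_0(\Gamma(\sqrt{P_T f}))(x)=\Gamma(\sqrt{P_T f})(x)$. For the right-hand side, I would evaluate $\int_0^T \alpha\gamma\, dt$ and $\int_0^T \alpha\gamma^2\, dt$ using $\alpha\gamma = \tfrac{n}{2}(WW' + KW^2)$ and $\alpha\gamma^2 = \tfrac{n^2}{4}(W'+KW)^2$. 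The first integral becomes $\frac{n}{2}\bigl[\tfrac12(W(T)^2-W(0)^2) + K\int_0^T W^2\bigr] = \frac{n}{2}\bigl[-\tfrac12 + K\int_0^T W^2\bigr]$, and the second becomes $\frac{n^2}{4}\bigl[\int_0^T (W')^2 - K + K^2\int_0^T W^2\bigr]$ after expanding the square and integrating $2KWW'$ via the fundamental theorem.

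Finally I would assemble these pieces: multiplying the integrated inequality through by $-1$ flips the direction, and dividing by $P_T f > 0$ gives
\[
\frac{\Gamma(\sqrt{P_T f})}{P_T f} \;\le\; \frac{\Delta P_T f}{P_T f}\cdot\tfrac{1}{2}\!\left(1 - 2K\!\int_0^T\! W^2\,ds\right) + \tfrac{n}{2}\!\left(\int_0^T\! (W')^2\,ds + K^2\!\int_0^T\! W^2\,ds - K\right),
\]
which is exactly \eqref{equ:ly-family}. The calculation is essentially a bookkeeping exercise once $\alpha$ and $\gamma$ are fixed; the only nontrivial content is the choice $\alpha = W^2$ that makes the coefficient of $\phi$ in the variational inequality vanish, together with the check that $W' \le -KW$ forces $\gamma\le 0$ so that Theorem~\ref{th:main-var-ineq} actually applies. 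There is no genuine analytic obstacle beyond these algebraic choices, since all integrands are smooth on $[0,T]$ and all quantities involved are finite by the boundedness results established earlier (Lemma~\ref{lem:bound}) that justify the pointwise identity $\phi(0,x)=\Gamma(\sqrt{P_T f})(x)$.
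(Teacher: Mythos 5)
Your proposal is correct and follows exactly the route the paper sketches: set $\alpha = W^2$, choose $\gamma = \frac{n}{2}(W'/W + K)$ to kill the $\phi$ coefficient, note that $W' \le -KW$ makes $\gamma \le 0$, and integrate Theorem~\ref{th:main-var-ineq} from $0$ to $T$ using $W(0)=1$, $W(T)=0$. The paper leaves the bookkeeping implicit; your computation of $\int_0^T \alpha\gamma\,dt$ and $\int_0^T \alpha\gamma^2\,dt$ fills in those details faithfully and reproduces \eqref{equ:ly-family}.
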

Here, the condition $W' \leq -KW$ amounts to the non-positivity of $\gamma$.   As observed in \cite{BG$09$}, the family obtained by taking
$$W(t)=\left(1-\frac{t}{T}\right)^{b},$$
for any $b > \frac{1}{2}$ is quite interesting in the regime where $-\frac{b}{T} < K$.

For this family,
$$\int_{0}^T W(s)^{2} ds=\frac{T}{2b+1},$$
and
$$\int_{0}^T W'(s)^{2} ds=\frac{b^{2}}{(2b-1)T}.$$

Thus for such a choice of $W$, the estimate \eqref{equ:ly-family} yields
\begin{equation}
\frac{\Gamma(\sqrt{P_{T} f})}{P_{T} f} \leq \frac{1}{2} \left(1-\frac{2K T}{2b+1}\right) \frac{\Delta P_{T}f}{P_{T}f} +\frac{n}{2}\left(\frac{b^{2}}{(2b-1)T} +\frac{K^{2}T}{2b+1}-K \right).
\label{equ:ly-general}
\end{equation}
When $K = 0$ and $b=1$, this reduces to the familiar Li-Yau inequality on graphs (as derived by \cite{BHLLMY13}).   Indeed, per the identity $\Delta P_{t}f=\partial_{t}P_{t} f=2\sqrt{P_{t} f}\partial_{t}\sqrt{P_{t} f}$ and switching the notion $T$ to $t$, \eqref{equ:ly-general} reduces to:
\begin{equation*}
\frac{\Gamma(\sqrt{P_{t} f})}{P_{t} f} -\frac{\partial_{t}\sqrt{P_{t} f}}{\sqrt{P_{t} f}} \leq \frac{n}{2t}, \quad  t>0.
\end{equation*}

\section{Volume Growth}  \label{sec:VG}

While the Li-Yau inequality is an attractive consequence of Theorem \ref{th:main-var-ineq}, a version was already known to hold on graphs using the $CDE(n,K)$ curvature dimensional inequality (which is slightly weaker than the $CDE'(n,K)$ inequality used in Theorem \ref{th:main-var-ineq}.).

In this section, we begin by exhibiting a further application of the variational inequality, and use it derive volume doubling from non-negative curvature, which was out of reach from previous results.
\begin{theorem}\label{th:exp-ineq}
Let $G=(V,E)$ be a locally finite and connected graph satisfying $CDE'(n,0)$, there exists an absolute positive constant $\rho > 0$, and $A > 0$, depending only on $n$, such that
\begin{equation}
P_{Ar^{2}} \left(\mathbf{1}_{B(x,r)}\right)(x)\geq \rho, \quad x\in V,\quad r>\frac{1}{2}.
\label{equ:exp-ineq}
\end{equation}
\end{theorem}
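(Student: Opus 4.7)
The strategy is to reduce the assertion to a Gaussian tail estimate for the heat semigroup at $x$. Writing $T = Ar^2$, I aim at the intermediate bound
\[
P_T\bigl(\mathbf{1}_{\{y \,:\, d(x,y) > r\}}\bigr)(x) \;\le\; C_0\,e^{-c_0 r^2/T} \;=\; C_0\, e^{-c_0/A},
\]
with $C_0, c_0 > 0$ depending only on $n$, $\alpha$, and $D_\mu$. Since $D_\mu < \infty$ makes $\Delta$ bounded on $\ell^2(V,\mu)$ and hence ensures stochastic completeness ($\sum_y \mu(y)p(T,x,y) = 1$), taking complements yields
\[
P_{Ar^2}\bigl(\mathbf{1}_{B(x,r)}\bigr)(x)\;\ge\;1 - C_0 e^{-c_0/A},
\]
and the theorem follows by choosing $A$ small enough (depending only on $n$). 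The hypothesis $r > 1/2$ is only required to make $B(x,r)$ nontrivial; the concentration argument itself goes through for every $r > 0$.

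To prove the Gaussian concentration, I would start from the main variational inequality \eqref{equ:main-var-ine} of Theorem~\ref{th:main-var-ineq} with $K = 0$. After choosing $\alpha$ to vanish at one endpoint (so that the boundary contribution in $\partial_s(\alpha\phi)$ integrates out) and $\gamma \le 0$ (so that Theorem~\ref{th:main-var-ineq} is applicable), integrating in $s$ from $0$ to $T$ should yield a graph-theoretic ``reverse Poincar\'e'' estimate
\[
P_T(f^2)(x) \;-\; \bigl(P_T f(x)\bigr)^2 \;\le\; C_n\, T\, P_T\bigl(\Gamma(f)\bigr)(x), \qquad f > 0 \text{ bounded}.
\]
This is the graph replacement, under $CDE'(n,0)$, of the standard semigroup gradient estimate that in the Riemannian setting is obtained from a Bakry--Emery $CD(\infty, 0)$ condition via the chain rule.

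From such a reverse Poincar\'e the Gaussian tail follows by the classical Herbst argument. Set $\psi_R(y) := d(x, y) \wedge R$; since $\lvert d(x,z) - d(x,y)\rvert \le 1$ whenever $z \sim y$, one has the pointwise bound $\Gamma(\psi_R)(y) \le D_\mu/2$ at every $y$. Applying the reverse Poincar\'e to $f = e^{\lambda \psi_R}$ and iterating in $\lambda$ yields an exponential-moment estimate $P_T\bigl(e^{\lambda \psi_R}\bigr)(x) \le \exp\bigl(\lambda \psi_R(x) + C_n' \lambda^2 T\bigr)$; Markov's inequality, optimisation at $\lambda = r/(2 C_n' T)$, and $R \to \infty$ then produce the Gaussian tail.

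The chief obstacle is the Herbst iteration in a non-chain-rule setting: the manifold identity $\Gamma(e^{\lambda \psi}) = \lambda^2 e^{2\lambda\psi}\,\Gamma(\psi)$ fails on graphs, and only a convexity substitute is available. This is the technical reason why $CDE'(n, 0)$ is needed rather than the weaker $CDE(n, 0)$: as in the proof of Remark~\ref{rem:cde}, the inequality $\log s \le s - 1$ lets one convert the term $f(x)^2(\Delta \log f)^2$ in~\eqref{eqn:cde'} into expressions that interact well with squared exponentials, and this conversion fails under the side condition ``$\Delta f < 0$'' that appears in $CDE(n,0)$. Because $CDE'$ controls $\widetilde{\Gamma}_2(\sqrt{P_{T-s}f})$ at \emph{every} vertex, one can run the reverse Poincar\'e estimate globally over $V$, which is precisely what the Herbst argument requires. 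Once this algebraic book-keeping is sorted out, the three steps above splice together to yield the theorem.
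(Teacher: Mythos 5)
Your high-level strategy (prove Gaussian tail concentration for the heat kernel, then pass to the complement) is a reasonable blueprint and even resembles the paper's endgame, but the central technical step is asserted without justification and, in fact, is precisely what fails on graphs and motivates the paper's machinery.

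The ``reverse Poincar\'e'' estimate
\[
P_T(f^2)(x) - \bigl(P_T f(x)\bigr)^2 \;\le\; C_n\, T\, P_T\bigl(\Gamma(f)\bigr)(x)
\]
does not follow from Theorem~\ref{th:main-var-ineq}. The semigroup identity
\[
P_T(f^2) - (P_Tf)^2 = 2\int_0^T P_s\Gamma(P_{T-s}f)\,ds
\]
holds on graphs (it only uses $\Delta(g^2) - 2g\Delta g = 2\Gamma(g)$), but to turn the integrand into $P_T\Gamma(f)$ you need the gradient commutation $\Gamma(P_t g)\le P_t\Gamma(g)$. In Bakry--\'Emery theory this follows from $CD(\infty,0)$, and in the diffusive setting $CDE'(n,0)$ is equivalent to $CD(n,0)$ so one could proceed that way. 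On graphs, however, that equivalence breaks, and all the paper's machinery (including \eqref{equ:main-var-ine}) controls $\Gamma(\sqrt{P_{T-t}f})$ rather than $\Gamma(P_{T-t}f)$. Indeed, letting $\tau\to\infty$ in \eqref{equ:int-main-var1} gives only $\Gamma(\sqrt{P_T h})\le P_T\Gamma(\sqrt{h})$; applied to $h=f^2$ this controls $\Gamma\bigl(\sqrt{P_T(f^2)}\bigr)$, which is not $\Gamma(P_T f)$, so the reverse Poincar\'e cannot be extracted this way. Your appeal to the $\log s\le s-1$ step from Remark~\ref{rem:cde} does not repair this: that inequality lives inside the proof of Theorem~\ref{th:main-var-ineq} and has already been consumed by the time one reaches \eqref{equ:main-var-ine}; it does not convert the $\sqrt{\cdot}$-based output into a bound on $\Gamma(P_tf)$.

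The paper's actual proof sidesteps the missing reverse Poincar\'e by implementing the Herbst idea directly on the $\sqrt{\cdot}$-based inequality: it applies the integrated form \eqref{equ:int-main-var1} to $\varphi = e^{2\lambda f}$ with $f$ a non-positive Lipschitz function, uses the elementary pointwise bound $\Gamma(e^{\lambda f})\le C(\lambda,c)^2\lambda^2 e^{2\lambda f}$ as a substitute for the chain rule, passes to the log-Laplace transform $\psi(\lambda,t)=\tfrac{1}{2\lambda}\log P_t e^{2\lambda f}$, optimizes over the free parameter $\tau$ in the choice $\alpha(t)=\tau+T-t$, and closes with Jensen's inequality. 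Taking $f(y)=-d(x,y)$ and the pointwise bound $e^{2\lambda f}\le e^{-2\lambda r}\mathbf{1}_{B^c}+\mathbf{1}_B$ then yields the lower bound on $P_{Ar^2}(\mathbf{1}_B)(x)$ directly, without invoking stochastic completeness. (Your stochastic-completeness remark is correct --- bounded $\Delta$ gives $P_t\mathbf{1}=\mathbf{1}$ --- but it is not how the paper closes the argument, and the paper's route is cleaner precisely because it avoids the reverse Poincar\'e.) You would need to replace your reverse Poincar\'e step with this exponential/Lipschitz device, or otherwise prove $\Gamma(P_t f)\le P_t\Gamma(f)$ from $CDE'(n,0)$, which currently is not available.

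A smaller point: the hypothesis $r > 1/2$ is not about nontriviality of $B(x,r)$. The paper uses it to ensure $e^{-2/C(\lambda,c)}<1$ when $\lambda C(\lambda,c)=1/r$, i.e.\ to keep the subtracted term in the final lower bound away from $1$; the paper notes that any $r>\epsilon$ would do.
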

\begin{proof}
Again, we proceed by carefully choosing a $\gamma$ to apply Theorem \ref{th:main-var-ineq}.  Let
$$\alpha(t)=\tau+T-t,$$
$$\gamma(t)=-\frac{n}{4(\tau+T-t)},$$
for $\tau > 0$, and $K=0$. For such a choice
$$\alpha'-\frac{4\alpha\gamma}{n}+2\alpha K=0,\quad \frac{2\alpha\gamma}{n}=-\frac{1}{2},\quad
\frac{2\alpha\gamma^{2}}{n}=\frac{n}{8(\tau+T-t)},$$
again simplifying the main inequality.  We integrate the inequality from $0$ to $T$, obtaining
\begin{equation}
\tau P_{T}(\Gamma(\sqrt{f}))-(T+\tau)\Gamma(\sqrt{P_{T} f})\geq -\frac{T}{2}\Delta P_{T}f-\frac{n}{8}\log\left(1+\frac{T}{\tau}\right)P_{T}f.
\label{equ:int-main-var1}
\end{equation}

Now, suppose $f$ is a non-positive $c$-Lipschitz function (that is, $|f(y)-f(x)| \leq c$ if $x \sim y$.)  Fix $\lambda \geq 0$, and consider the
function $\varphi = e^{2\lambda f}$.  Clearly, $f$ is positive and bounded.  Let
\[
\psi(\lambda,t) = \frac{1}{2\lambda} \log (P_t e^{2\lambda f}),
\]
so that
\[
P_t \varphi = P_t(e^{2\lambda f}) = e^{2\lambda \psi}.
\]

Applying  \eqref{equ:int-main-var1} to $\varphi$, and switching notation from $T$ to $t$, one obtains that
\begin{equation} \tau P_{t}(\Gamma(e^{\lambda f}))-(t+\tau)\Gamma(e^{\lambda \psi})\geq -\frac{t}{2}\Delta P_{t} \varphi-\frac{n}{8}\log\left(1+\frac{t}{\tau}\right)e^{2\lambda \psi}.
\label{eq:ineq}
\end{equation}
Fix $x\in V$.  Taking $C(\lambda,c)=\sqrt{\frac{D_{\mu}}{2}}c e^{\lambda c}<\infty$, we have
\[
\begin{split}
\Gamma(e^{\lambda f})(x)
&=\frac{1}{2}\widetilde{\sum_{y\sim x}}\left(e^{\lambda f(y)}-e^{\lambda f(x)}\right)^{2}\\
&=\frac{1}{2}e^{2\lambda f(x)}\widetilde{\sum_{y\sim x}}\left(e^{\lambda (f(y)-f(x))}-1\right)^{2}\\
&=\frac{1}{2}e^{2\lambda f(x)}\left(\widetilde{\sum_{0\leq f(y)-f(x)\leq c}}\left(e^{\lambda (f(y)-f(x))}-1\right)^{2}+\widetilde{\sum_{-c\leq f(y)-f(x)\leq 0}}\left(e^{\lambda (f(y)-f(x))}-1\right)^{2}\right)\\
&\leq \frac{1}{2}e^{2\lambda f(x)}\left(e^{2\lambda c}\widetilde{\sum_{0\leq f(y)-f(x)\leq c}}\left(1-e^{-\lambda c}\right)^{2}+\widetilde{\sum_{-c\leq f(y)-f(x)\leq 0}}\left(e^{-\lambda c}-1\right)^{2}\right)\\
&\leq \frac{1}{2}e^{2\lambda f(x)}e^{2\lambda c}\widetilde{\sum_{y\sim x}}\left(e^{-\lambda c}-1\right)^{2}\\
&\leq C(\lambda,c)^{2}\lambda^{2}e^{2\lambda f(x)}.
\end{split}\]

This enables us to upper bound the left hand side of \eqref{eq:ineq}, obtaining
$$\tau P_{t}(\Gamma(e^{\lambda f}))-(t+\tau)\Gamma(e^{\lambda \psi}) \leq \tau P_{t}(\Gamma(e^{\lambda f}))\leq C(\lambda,c)^{2}\lambda^{2}\tau P_{t}(e^{2\lambda f})= C(\lambda,c)^{2}\lambda^{2}\tau e^{2\lambda \psi}.$$

Combining this with the fact that
$$\Delta P_{t} \varphi=\partial_{t} e^{2\lambda \psi}=2\lambda e^{2\lambda \psi}\partial_{t}\psi,$$
we obtain that
\begin{equation}
\partial_{t}\psi \geq -\frac{\lambda}{t}\left(C(\lambda,c)^{2}\tau+\frac{n}{8\lambda^{2}}\log(1+\frac{t}{\tau})\right).
\label{equ:partial-ineq}
\end{equation}

Since \eqref{equ:partial-ineq} holds for all $\tau$, we optimize.  Setting $\tau$ to be the optimal value,
$$\tau_{0}=\frac{t}{2}\left(\sqrt{1+\frac{n}{2\lambda^{2}C(\lambda,c)^{2}t}}-1\right),$$
and substituting into  \eqref{equ:partial-ineq} obtain
\begin{equation}
-\partial_{t}\psi \leq \lambda C(\lambda,c)^{2} G\left(\frac{1}{\lambda^{2}C(\lambda,c)^{2}t}\right).
\label{equ:partial-ineq2}
\end{equation}
Here,
$$G(s)=\frac{1}{2}\left(\sqrt{1+\frac{n}{2}s}-1\right)+\frac{n}{8}s\log\left(1+\frac{2}{\sqrt{1+\frac{n}{2}s}-1}\right),\quad s>0.$$

Note that $G(s)\to 0$ as $s \to 0^{+}$, and that $G(s) \sim \sqrt{\frac{ns}{2}}$ as $s \to +\infty$.   Integrate the inequality \eqref{equ:partial-ineq2} between $t_{1}$ and $t_{2}$ (for $t_1 \leq t_2$) and we obtain that,
$$\psi(\lambda,t_{1})\leq \psi (\lambda,t_{2})+\lambda C(\lambda,c)^{2} \int_{t_{1}}^{t_{2}} G\left(\frac{1}{\lambda^{2}C(\lambda,c)^{2} t}\right)dt.$$
Jensen's inequality in $\psi$ yields that
$$2\lambda \psi(\lambda,t)= \ln(P_{t}e^{2\lambda f})\geq P_{t}(\ln e^{2\lambda f})=2\lambda P_{t}f.$$
This yields that $\lambda P_{t_{1}}f \leq \lambda \psi(\lambda,t_{1})$, and combining with the previous inequality we have that for all $t_1 \leq t_2$.
$$P_{t_{1}}(\lambda f)\leq \lambda\psi (\lambda,t_{2})+\lambda^{2}C(\lambda,c)^{2}\int_{t_{1}}^{t_{2}} G\left(\frac{1}{\lambda^{2}C(\lambda,c)^{2}t}\right)dt.$$
Replacing $t_2$ with $t$, and letting $t_1 \to 0^+$ we obtain
\begin{equation}
\lambda f\leq \lambda\psi (\lambda,t)+\lambda^{2}C(\lambda,c)^{2}\int_{0}^{t} G\left(\frac{1}{\lambda^{2}C(\lambda,c)^{2}\tau}\right)d\tau.
\label{equ:G-ineq}
\end{equation}

Now fix a vertex  $x\in V$.  Let $B = B(x, r)$, and consider the function $f(y)=-d(y,x)$. Clearly $f$ is $1$-Lipschitz.  For such a $1$-Lipschitz function, we may use $C(\lambda,c) =\sqrt{\frac{D_{\mu}}{2}}e^{\lambda}$ in the proceeding.

Clearly,
$$e^{2\lambda f}\leq e^{-2\lambda r}\mathbf{1}_{B^{c}}+\mathbf{1}_{B}.$$

Thus for every $t > 0$ one has,
$$e^{2\lambda\psi(\lambda,t)(x)}=P_{t}(e^{2\lambda f})(x)\leq e^{-2\lambda r}+P_{t}(\mathbf{1}_{B})(x)$$
and we obtain the lower bound
$$P_{t}(\mathbf{1}_{B})(x)\geq e^{2\lambda\psi(\lambda,t)(x)}-e^{-2\lambda r}.$$
\eqref{equ:G-ineq} allows us to estimate the first term in this lower bound.
If
$$\phi(\lambda C(\lambda,c),t)=\lambda^{2}C(\lambda,c)^{2}\int_{0}^{t} G\left(\frac{1}{\lambda^{2}C(\lambda,c)^{2}\tau}\right)d\tau,$$
\eqref{equ:G-ineq} yields
$$1=e^{2\lambda f(x)}\leq e^{2\lambda\psi(\lambda,t)(x)} e^{2\phi(\lambda C(\lambda,c),t)}.$$

Hence
$$P_{t}(\mathbf{1}_{B})(x)\geq e^{-2\phi(\lambda C(\lambda,c),t)}-e^{-2\lambda r}.$$

Choose $\lambda C(\lambda,c) =\frac{1}{r}$, $t = Ar^{2}$, and we obtain
$$P_{Ar^{2}}(\mathbf{1}_{B})(x)\geq e^{-2\phi(\frac{1}{r},Ar^{2})}-e^{-\frac{2}{C(\lambda,c)}}.$$

To finish, we must choose $A > 0$ sufficiently small, depending only on $n$, and a $\rho > 0$, so that for every $x\in V$ and $r>\frac{1}{2}$
\begin{equation}
e^{-2\phi(\frac{1}{r},Ar^{2})}-e^{-\frac{2}{C(\lambda,c)}} \geq \rho.
\label{equ:10}
\end{equation}
(Note that, actually, the point that $r > \frac{1}{2}$ simply implies that the term $e^{-\frac{2}{C(\lambda,c)}}$ is not one.  Replacing this by $r > \epsilon$ for any positive $\epsilon$ would likewise suffice.)

To see such an $A$ exists,  consider the function
$$\phi(\frac{1}{r},Ar^{2})=\frac{1}{r^{2}}\int_{0}^{Ar^{2}}G\left(\frac{r^{2}}{\tau}\right)d\tau=\int_{A^{-1}}^{\infty}\frac{G(t)}{t^{2}}dt.$$
$\phi(\frac{1}{r},Ar^{2}) \to 0$ as $A \to 0^+$, and hence such a sufficiently small $A$ exists to ensure that \ref{equ:10} holds and this completes the proof.
\end{proof}

%\section{Volume Growth}

In this section we use the previous result to show that non-negatively curved graphs (with respect to $CDE'$) satisfying the volume doubling property.  That is, we obtain:
\begin{theorem}\label{th:doubling-volume}
Suppose a locally finite, connected graph $G$ satisfies $CDE'(n,0)$, then $G$ satisfies the volume doubling property $DV(C)$. That is, there exists a constant $C=C(n) > 0$ such that for all $x\in V$ and all $r>0$:
\begin{equation*}
V(x,2r)\leq C  V(x,r).
\end{equation*}
\end{theorem}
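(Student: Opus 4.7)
The plan is to combine the ``heat content'' lower bound from Theorem~\ref{th:exp-ineq} with a parabolic Harnack inequality derived from the Li--Yau estimate \eqref{equ:ly-general} (with $K=0$, $b=1$), in order to obtain a pointwise lower bound $p(Ct r^2, x, z) \gtrsim 1/V(x,r)$ that is uniform in $z \in B(x, 2r)$. Volume doubling then drops out of the mass bound $\sum_z \mu(z) p(t,x,z) \le 1$.

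The first step is to upgrade \eqref{equ:ly-general} to a discrete parabolic Harnack inequality of the form
$$u(s, x_1) \le u(t, x_2) \left(\frac{t}{s}\right)^{n/2} \exp\left(c_1 \frac{d(x_1, x_2)^2}{t-s}\right), \qquad 0 < s < t,$$
for positive solutions $u$ of the heat equation. This is done by the standard space--time path integration, working with $\sqrt{P_\tau f}$ in place of $\log P_\tau f$ (there is no chain rule available); the integrand is bounded using \eqref{equ:ly-general} applied along a shortest path in the graph from $x_1$ to $x_2$, with time running linearly from $s$ to $t$. The Harnack inequality is then applied to the positive solution $u(\tau, w) := p(\tau, x, w)$.

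For the second step, fix $x \in V$ and $r > \tfrac{1}{2}$. Theorem~\ref{th:exp-ineq} yields
$$\rho \le P_{Ar^2}\bigl(\mathbf{1}_{B(x,r)}\bigr)(x) = \sum_{y \in B(x,r)} \mu(y)\, p(Ar^2, x, y) \le V(x, r)\, \sup_{y \in B(x, r)} p(Ar^2, x, y),$$
so that $p(Ar^2, x, y^\ast) \ge \rho / V(x, r)$ for some $y^\ast \in B(x,r)$. Applying the Harnack inequality of Step~1 with $s = Ar^2$, $t = 4Ar^2$, $x_1 = y^\ast$ and $x_2 = z$ arbitrary in $B(x, 2r)$ (so $d(y^\ast, z) \le 3r$ and $(t-s) = 3Ar^2$) gives
$$p(4Ar^2, x, z) \ge c_2(n)\, p(Ar^2, x, y^\ast) \ge \frac{c_2(n)\,\rho}{V(x,r)} \qquad \forall\, z \in B(x, 2r).$$
Summing over $z \in B(x, 2r)$ and using the mass bound of Remark~\ref{rem:heatkpro} yields
$$\frac{c_2(n)\,\rho\, V(x, 2r)}{V(x, r)} \le \sum_{z \in B(x, 2r)} \mu(z)\, p(4Ar^2, x, z) \le 1,$$
so $V(x, 2r) \le C V(x, r)$ with $C = (c_2(n)\,\rho)^{-1}$. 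The remaining range $r \le \tfrac{1}{2}$ (where $B(x,r) = \{x\}$) is handled separately by locally finiteness.

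I expect the main obstacle to be the first step, the derivation of a clean space--time Harnack inequality from \eqref{equ:ly-general}. On a manifold this is a routine integration of a gradient estimate along a geodesic; in the graph setting the absence of a chain rule forces one to work with $\sqrt{P_t f}$ rather than $\log P_t f$, and one must carefully bound the discrete difference $\sqrt{P_{t_2} f(x_2)} - \sqrt{P_{t_1} f(x_1)}$ along a vertex path by $\Gamma(\sqrt{P_\tau f})/P_\tau f$ terms controlled by \eqref{equ:ly-general}. A minor additional subtlety is verifying that the constant $A$ from Theorem~\ref{th:exp-ineq} can be reused at time $4Ar^2$ in the Harnack step without degrading the dimension dependence of $C$.
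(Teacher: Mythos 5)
Your argument is correct and arrives at $DV$ via essentially the same two ingredients the paper uses, namely Theorem~\ref{th:exp-ineq} and a heat-kernel Harnack inequality, but the way you combine them is a genuinely different route. The paper's proof (of Theorem~\ref{th:doubling-volume}) starts from the semigroup identity $p(2t,x,x)=\sum_z\mu(z)p(t,x,z)^2$ and a Cauchy--Schwarz estimate against a cutoff, giving $\bigl(P_{r^2}(\mathbf{1}_{B(x,r/2)})(x)\bigr)^2\le p(2r^2,x,x)V(x,r)$; combined with Theorem~\ref{th:exp-ineq} and Corollary~\ref{coro:hanack}, this produces a pair of on-diagonal bounds, a lower bound $p(2r^2,x,x)\ge\rho^*/V(x,r)$ and an upper bound $p(4r^2,x,x)\le C(n)/V(x,2r)$ (the latter by Harnack plus $\sum_y\mu(y)p\le 1$), which it then compares. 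Your version skips the Cauchy--Schwarz/on-diagonal step entirely: you extract a single point $y^*\in B(x,r)$ where $p(Ar^2,x,y^*)\ge\rho/V(x,r)$, propagate this pointwise to all $z\in B(x,2r)$ via Harnack (all distances involved are $O(r)$ and the time gap is $\Theta(r^2)$, so the Harnack constant depends only on $n$ and $A$), and then close with the total-mass bound. This is a little more direct and avoids the slightly magical $L^2$ duality argument; the paper's route is slightly cleaner in that it only compares the heat kernel at the single space point $x$.

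Two smaller remarks. First, your ``Step 1'' --- re-deriving a parabolic Harnack inequality from \eqref{equ:ly-general} by space--time path integration --- is not actually needed, because the paper already records exactly such an inequality as Corollary~\ref{coro:hanack} (imported from \cite{BHLLMY13}), in the form $p(t,x,y)\le p(s,x,z)(s/t)^{n}\exp\bigl(4Dd(y,z)^2/(s-t)\bigr)$ for $t<s$. Citing that directly removes the main source of risk you flag; the space--time integration without a chain rule is indeed the delicate point, and it has already been carried out in \cite{BHLLMY13}. Second, you are right that the exponent $n$ versus $n/2$ and the precise constants do not matter: with $s=Ar^2$, $t=4Ar^2$, $d(y^*,z)\le 3r$ you get a factor of the form $4^{n}\exp(12D/A)$, which depends only on $n$ (through $A$) and $D_\mu$, so the doubling constant $C$ you obtain has the same qualitative dependence as the paper's.
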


Actually, with some simple computations we can get some slightly stronger conclusions on volume regularity that we will find useful in the proof of a Gaussian estimate.

\begin{remark}\label{rem:general-doubling-volume}
For any $r\geq s$,
\[
\begin{split}
V(x,r)
&\leq V(x,2^{[\frac{\log(\frac{r}{s})}{\log2}]+1}s)\\
&\leq  C^{1+\frac{\log(\frac{r}{s})}{\log2}}V(x,s)\\
&=C \left(\frac{r}{s}\right)^{\frac{\log C}{\log2}}V(x,s),
\end{split}
\]
where $[x]$ denotes the integer part of $x$.
\end{remark}

One final tool in the proof of Theorem~\ref{th:doubling-volume} is an explicit form of a Harnack inequality arising from the Li-Yau inequality.  Such an inequality was derived in \cite{BHLLMY13}.  In the (simplified by our assumption that $K=0$) form in which we apply it, it states that

\begin{corollary}\label{coro:hanack}
Suppose $G$ is a finite or infinite graph satisfying $CDE'(n,0)$, and assume $D:=\frac{\mu_{max}}{\omega_{min}} <\infty$, then for every $x\in V$ and $(t, y),(t, z)\in (0,+\infty)\times V$ with $t < s$ one has
$$p(t,x,y)\leq  p(s,x, z)\left(\frac{s}{t}\right)^{n} \exp\left(\frac{4Dd(y,z)^{2}}{s-t}\right).$$
\end{corollary}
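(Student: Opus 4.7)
The plan is to derive the stated Harnack inequality by integrating the Li-Yau estimate from the end of Section 3 along a discrete space-time path from $(t,y)$ to $(s,z)$, following the classical Li--Yau manifold argument and its graph adaptation in \cite{BHLLMY13}. Applying the inequality (3.14) with $K=0$ and $b=1$ to the positive solution $u(\tau,w)=p(\tau,x,w)$ and using $\Delta u=\partial_\tau u$, we obtain the differential Harnack inequality
\[
\partial_\tau \log u(\tau,w) \;\geq\; \frac{2\,\Gamma(\sqrt u)(\tau,w)}{u(\tau,w)} \,-\, \frac{n}{\tau}.
\]

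Fix a shortest path $y=y_0\sim y_1\sim\cdots\sim y_N=z$ with $N=d(y,z)$, and partition $[t,s]$ into $N$ equal intervals of length $\tau_0=(s-t)/N$ with endpoints $t_k=t+k\tau_0$. Decompose $\log u(t,y)-\log u(s,z)$ as a telescoping sum of a temporal increment at $y_k$ from $t_k$ to $t_{k+1}$ and a spatial hop from $y_k$ to $y_{k+1}$ at time $t_{k+1}$. For the temporal pieces, integrate the displayed differential Harnack inequality over $[t_k,t_{k+1}]$ at $w=y_k$: summing the $-n/\tau$ contribution yields exactly $n\log(s/t)$, while the nonnegative $2\Gamma(\sqrt u)/u$ contribution is retained as a reservoir. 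For the spatial pieces, the hypothesis $\omega_{\min}/\mu_{\max}\geq 1/D$ gives the one-edge lower bound
\[
\Gamma(\sqrt u)(y_k) \;\geq\; \frac{1}{2D}\bigl(\sqrt{u(y_{k+1})}-\sqrt{u(y_k)}\bigr)^2,
\]
so each edge provides a quadratic penalty in $r_k-1$, where $r_k=\sqrt{u(y_{k+1})/u(y_k)}$, of size at least $\tau_0/(2D)$ per step. A Young-type inequality $2\lvert\log r\rvert\leq \eta(r-1)^2+C/\eta$ then lets us absorb the logarithmic jump $\log u(y_k)-\log u(y_{k+1})=-2\log r_k$ into the retained reservoir.

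Summing over $k=0,\dots,N-1$ produces a residual of order $N/\eta$, and balancing $\eta\sim\tau_0/D$ gives $N^2 D/(s-t)=D\,d(y,z)^2/(s-t)$; optimizing constants in the Young step produces the factor $4D\,d(y,z)^2/(s-t)$, and exponentiating yields the stated inequality. The main technical obstacle is the absence of a pointwise chain rule, manifesting in the fact that $(\sqrt a-\sqrt b)^2$ (the natural quantity arising from $\Gamma(\sqrt u)$) is not globally comparable to $(\log a-\log b)^2$; in particular, $|\log r|$ can be arbitrarily large when $r\to 0^+$ while $(r-1)^2$ stays bounded. This is handled by case-splitting on whether $r_k\geq 1$ or $r_k<1$ (the latter being reduced to the former by reversing the edge, since the differential Harnack inequality is simultaneously applied at both endpoints of the edge) together with the careful bookkeeping already carried out in \cite{BHLLMY13}, of which the present corollary is the specialization to $K=0$.
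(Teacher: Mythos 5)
The paper does not give a proof of this corollary at all: the text immediately preceding it says that the inequality ``was derived in \cite{BHLLMY13}'' and that the corollary is the ``(simplified by our assumption that $K=0$) form in which we apply it.'' So your proposal is a from-scratch reconstruction of the \cite{BHLLMY13} argument, not a reproduction of anything this paper does; comparing to ``the paper's own proof'' is not quite possible since there is none here.

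As a reconstruction, the high-level architecture is right: rewrite the $K=0$, $b=1$ Li--Yau estimate from the end of Section 3 as $\partial_\tau \log u \geq 2\Gamma(\sqrt u)/u - n/\tau$, telescope $\log u(t,y)-\log u(s,z)$ along a space-time staircase of $N=d(y,z)$ edges, harvest $n\log(s/t)$ from the $-n/\tau$ part, and use the single-edge bound $\Gamma(\sqrt u)(y_k)\geq \frac{1}{2D}\bigl(\sqrt{u(y_{k+1})}-\sqrt{u(y_k)}\bigr)^2$ to pay for the spatial jumps. The scaling $Dd(y,z)^2/(s-t)$ drops out exactly as you indicate, and you correctly flag the absence-of-chain-rule obstruction.

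There is, however, a genuine gap that your sketch waves past. The reservoir you retain from the temporal increment at $y_k$ is the \emph{time-integral} $\int_{t_k}^{t_{k+1}} \frac{2\Gamma(\sqrt u)(s,y_k)}{u(s,y_k)}\,ds$, so it controls $\frac{1}{D}\int_{t_k}^{t_{k+1}}\bigl(r_k(s)-1\bigr)^2\,ds$ where $r_k(s)=\sqrt{u(s,y_{k+1})/u(s,y_k)}$ varies in $s$; but the spatial jump you are trying to pay for, $-2\log r_k(t_{k+1})$, is evaluated at a single time $t_{k+1}$. Nothing in the sketch relates the time-average of $(r_k(\cdot)-1)^2$ to its value at the endpoint, and in fact $r_k$ need not be monotone or even approximately constant on $[t_k,t_{k+1}]$. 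One needs either a mean-value choice of hopping time inside the interval or a more careful path-splitting to align the reservoir with the jump. Your proposed fix for the case $r_k<1$ --- ``reverse the edge'' and use $\Gamma(\sqrt u)(y_{k+1})$ --- runs into the same mismatch compounded: the temporal reservoir at $y_{k+1}$ lives on $[t_{k+1},t_{k+2}]$, not $[t_k,t_{k+1}]$, so reversing the edge does not hand you a reservoir at the right place and time. These are precisely the points where the full argument in \cite{BHLLMY13} does nontrivial bookkeeping, and as written your proposal does not yet resolve them; it identifies the obstacle correctly but outsources its resolution to the reference rather than closing it.
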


We now turn to the proof of Theorem~\ref{th:doubling-volume}.
\begin{proof}
From the semigroup property and the symmetry of the heat kernel in Remark~\ref{rem:heatkpro}, for any $y\in V$ and $t>0$, we have
$$p(2t,y,y)=\sum_{z \in V}\mu(z)p(t,y,z)^{2}.$$
Consider now a cut-off function $h \in V^{\mathbb{R}}$ such that $0\leq h\leq 1$, $h\equiv 1$ on $B(x,\frac{\sqrt{t}}{2})$ and $h\equiv 0$ outside $B(x,\sqrt{t})$. We thus have
\begin{equation}
\label{equ:hkernel-lower-bound}
\begin{split}
P_{t}h(y)
&=\sum_{z \in V}\mu(z)p(t,y,z)h(z)\\
&\leq \left(\sum_{z \in V}\mu(z)p(t,y,z)^{2}\right)^{\frac{1}{2}}\left(\sum_{z \in V}\mu(z)h(z)^{2}\right)^{\frac{1}{2}}\\
&\leq\left(p(2t,y,y)\right)^{\frac{1}{2}}\left(V(x,\sqrt{t})\right)^{\frac{1}{2}}.
\end{split}
\end{equation}
Taking $y = x$, and $t = r^{2}$, we obtain
\begin{equation}
\left(P_{r^{2}}(\mathbf{1}_{B(x,\frac{r}{2})})(x)\right)^{2}\leq \left(P_{r^{2}}h(x)\right)^{2} \leq p(2r^{2},x,x)V(x,r).
\label{equ:11}
\end{equation}
At this point we use the crucial inequality \eqref{equ:exp-ineq}, which gives for some $0 < A < 1$, depending on the dimension $n$,
$$P_{Ar^{2}} \left(\mathbf{1}_{B(x,r)}\right)(x)\geq \rho, \quad x\in V,\quad r>\frac{1}{2}.$$

Combining the latter inequality with \eqref{equ:11} and Corollary~\ref{coro:hanack}, we obtain an on-diagonal lower bound
\begin{equation}
p(2r^{2},x,x)\geq\frac{\rho^*}{V(x,r)},\quad x\in V,\quad r>\frac{1}{2}.
\label{equ:12}
\end{equation}
Applying Corollary~\ref{coro:hanack} to $p(t,x, y)$, one obtains that for every $y\in B(x,\sqrt{t})$, we find
\begin{equation}
p(t,x, x)\leq C(n)p(2t,x, y).
\label{equ:13}
\end{equation}
Integrating the above inequality over $B(x,\sqrt{t})$ with respect to $y$ gives
$$p(t,x, x)V(x,\sqrt{t})\leq C(n)\sum_{y \in B(x,\sqrt{t})}\mu(y)p(2t,x, y)\leq C(n).$$
Further letting $t = 4r^{2}$, we obtain an on-diagonal upper bound
\begin{equation}
p(4r^{2},x, x)\leq \frac{C(n)}{V(x,2r)}.
\label{equ:14}
\end{equation}
Combining \eqref{equ:12},\eqref{equ:13} with \eqref{equ:14} we finally obtain for any $r>\frac{1}{2}$,
$$V(x,2r)\leq \frac{C}{p(4r^{2},x, x)}\leq \frac{C^{*}}{p(2r^{2},x, x)}\leq C^{**} V(x,r).$$
When $0<r\leq\frac{1}{2}$, $V(x,2r)\equiv  V(x,r)=\mu(x)$, the result is obvious. This completes the proof.
\end{proof}
As a remark, while the proof is fairly simple, it illustrates the power of inequality \eqref{equ:11}.  The failing in the \cite{BHLLMY13} paper to obtain volume doubling lay exactly in this point: we previously were obtained a lower bound on this quantity by directly applying the Harnack inequality Corollary \ref{coro:hanack} in a somewhat unusual fashion, rather strongly using the fact that the Harnack inequality obtained by integrating the Li-Yau inequality gives a more explicit estimate than the continuous-time Harnack inequality introduced in Section 2.   We did this, instead of using the approach of the current proof, the lack of quality cutoff functions in the graph case meant that our Li-Yau inequality (and hence the obtained Harnack inequality) was insufficient to derive a strong enough lower bound to imply volume doubling.  The lesson here should be taken that {\it using the heat-semigroup arguments as done above allows us to work around the lack of quality cutoff functions for graphs.}

\section{Gaussian estimate} \label{sec:GE}
In this section we focus on the normalized Laplacian: that is, we take our measure $\mu$ to be $\mu(x) = m(x)$.   We will prove a discrete-time Gaussian estimate on a infinite, connected and locally finite graph $G=(V,E)$.

Let $\mathcal{P}_{t}(x,y)=p(t,x,y)m(y)$ be the continuous-time Markov kernel on the graph.  It is also a solution of the heat equation. By symmetry, the  heat kernel $p(t,x,y)$ satisfies
\[\frac{\mathcal{P}_{t}(x,y)}{m(y)}=\frac{\mathcal{P}_{t}(y,x)}{m(x)}.\]
Let $p_{n}(x,y)$ be the discrete-time kernel on $G$, which is defined by
\[\left\{
  \begin{array}{ll}
    p_{0}(x,y)=\delta_{xy},  \\
    p_{k+1}(x,z)=\sum_{y\in V}p(x,y)p_{k}(y,z).
  \end{array}
\right.\]
where $p(x,y):=\frac{\omega_{xy}}{m(x)}$, and $\delta_{xy}=1$ only when $x=y$, otherwise equals to $0$. We can know the two kernels satisfy
\begin{equation}\label{equ:cont-disc-kernel}
e^{-t}\sum_{k=0}^{+\infty}\frac{t^{k}}{k!}p_{k}(x,y)=\mathcal{P}_{t}(x,y).
\end{equation}

In order to obtain our desired Gaussian estimate, we first establish the continuous-time Gaussian on-diagonal estimate for graphs. As demonstrated in \cite{BHLLMY13}, the Harnack inequality obtained in that paper suffices to prove a Gaussian {\it upper} bound for bounded degree graphs satisfying $CDE(n,0)$.  A Gaussian lower bound was unavailable in that work, however. This failure was closely tied to the inability to use $CDE'$ to imply volume doubling.  With the new information gleaned from our modified curvature condition, we {\it are} however able to derive the Gaussian lower bound, as we now illustrate.

\begin{theorem}\label{th:continue-time-Gauss-estimate}
Suppose a graph $G$ satisfies $CDE'(n_0,0)$.  Then $G$ satisfies the continuous-time Gaussian estimate.   That is,  there exists a constant $C$ with respect to $n_0$ so that, for any $x,y\in V$ and for all $t>0$,
\[\mathcal{P}_{t}(x,y)\leq \frac{Cm(y)}{V(x,\sqrt{t})}.\]
Furthermore, for any $t_0 > 0$, there exist constants $C'$ and $c'$, so that for all $t > t_0$:
\[\mathcal{P}_{t}(x,y)\geq \frac{C'm(y)}{V(x,\sqrt{t})}\exp\left(-c'\frac{d(x,y)^{2}}{t}\right).\]
\end{theorem}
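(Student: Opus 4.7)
The plan is to bootstrap from on-diagonal estimates for $p(t,x,x)$, which are essentially already contained in the proof of Theorem~\ref{th:doubling-volume}, and then spread them off-diagonally: by a Cauchy--Schwarz step through the semigroup property for the upper bound, and by the Li--Yau Harnack inequality of Corollary~\ref{coro:hanack} for the lower bound. For the upper bound, the on-diagonal estimate $p(t,x,x)\le C(n_0)/V(x,\sqrt t)$ is exactly \eqref{equ:14} after the substitution $t=4r^2$. I would then apply Cauchy--Schwarz to the semigroup identity $p(t,x,y)=\sum_{z\in V}\mu(z)p(t/2,x,z)p(t/2,z,y)$ from Remark~\ref{rem:heatkpro}; symmetry of $p$ lets us recognise $\sum_z \mu(z) p(t/2,x,z)^2 = p(t,x,x)$, whence $p(t,x,y)\le \sqrt{p(t,x,x)\,p(t,y,y)}$. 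Inserting the on-diagonal bound at both $x$ and $y$, and invoking Remark~\ref{rem:general-doubling-volume} (together with the inclusion $B(x,\sqrt t)\subseteq B(y,\sqrt t+d(x,y))$) to control the ratio $V(x,\sqrt t)/V(y,\sqrt t)$, yields the claimed estimate $\mathcal{P}_t(x,y)\le Cm(y)/V(x,\sqrt t)$.

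For the lower bound, the on-diagonal ingredient is \eqref{equ:12}, namely $p(2r^2,x,x)\ge \rho^*/V(x,r)$ for $r>1/2$, and the Harnack inequality Corollary~\ref{coro:hanack} is used to transport this off-diagonally. Applying that corollary with times $t/2<t$ and with $z=y$ gives
\[
p(t/2,x,x)\le p(t,x,y)\,2^{n_0}\exp\!\left(\frac{8D\,d(x,y)^2}{t}\right),
\]
so
\[
p(t,x,y)\ge 2^{-n_0}\exp\!\left(-\frac{8D\,d(x,y)^2}{t}\right)p(t/2,x,x).
\]
Provided $t>2$, so that $r=\sqrt t/2>1/2$, the on-diagonal lower estimate gives $p(t/2,x,x)\ge \rho^*/V(x,\sqrt t/2)\ge \rho^*/V(x,\sqrt t)$, the last inequality being mere monotonicity of the volume in the radius. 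Multiplying through by $m(y)$ gives the desired Gaussian lower bound with, say, $t_0=2$; the threshold $t_0$ in the statement is nothing but the restriction $r>1/2$ of Theorem~\ref{th:exp-ineq} pulled back through this time change.

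The essential analytic work has already been done upstream: the on-diagonal bounds rest on the exponential integrability of Theorem~\ref{th:exp-ineq} and on the Li--Yau Harnack inequality of Corollary~\ref{coro:hanack}, both of which are ultimately consequences of the variational inequality of Theorem~\ref{th:main-var-ineq} under $CDE'(n_0,0)$. What is left is routine bookkeeping with volume doubling, and the only mild subtlety is the absence of a Gaussian decay factor $\exp(-c\,d(x,y)^2/t)$ in the upper bound. This is not a defect of the proof but is unavoidable for continuous-time heat kernels on graphs, as noted in the introduction, and it is precisely the reason that the genuine Gaussian estimate $G(c_l,C_l,C_r,c_r)$ is stated in discrete time and must be extracted from $\mathcal{P}_t$ via the Poisson relation \eqref{equ:cont-disc-kernel} in the subsequent step.
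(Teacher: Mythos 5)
Your lower bound argument is essentially the paper's: the paper also combines the on-diagonal lower bound from Theorem~\ref{th:exp-ineq}/\eqref{equ:12} with Corollary~\ref{coro:hanack}, only keeping a free parameter $\varepsilon\in(0,1)$ where you fix $\varepsilon=\tfrac12$; both proofs then observe that the restriction $r>\tfrac12$ in Theorem~\ref{th:exp-ineq} can be relaxed to handle an arbitrary $t_0$.

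The upper bound, however, has a genuine gap. Cauchy--Schwarz through the semigroup identity gives
\[
p(t,x,y)\ \le\ \sqrt{p(t,x,x)\,p(t,y,y)}\ \le\ \frac{C}{\sqrt{V(x,\sqrt t)\,V(y,\sqrt t)}},
\]
and to conclude $p(t,x,y)\le C'/V(x,\sqrt t)$ you would need $V(y,\sqrt t)\ge c\,V(x,\sqrt t)$ with $c$ independent of $x,y,t$. That is false: volume doubling together with $B(x,\sqrt t)\subseteq B(y,\sqrt t + d(x,y))$ and Remark~\ref{rem:general-doubling-volume} only gives
\[
\frac{V(x,\sqrt t)}{V(y,\sqrt t)}\ \le\ C\Bigl(1+\tfrac{d(x,y)}{\sqrt t}\Bigr)^{\log C/\log 2},
\]
which is unbounded as $d(x,y)/\sqrt t\to\infty$. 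Your own observation that there is no Gaussian factor in the continuous-time upper bound is exactly what sinks this route: with a factor $\exp(-c\,d(x,y)^2/t)$ one could absorb such a polynomial correction, but without it the Cauchy--Schwarz bound $(V(x,\sqrt t)V(y,\sqrt t))^{-1/2}$ is genuinely weaker than $V(x,\sqrt t)^{-1}$ whenever $y$ sits in a locally thin region far from $x$.

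The paper avoids this by never comparing $V(x,\sqrt t)$ to $V(y,\sqrt t)$. It applies Corollary~\ref{coro:hanack} in the $x$-variable (using the symmetry $p(t,x,y)=p(t,y,x)$), so that for every $z\in B(x,\sqrt t)$ one has $p(t,x,y)\le 2^{n_0}e^{4D}\,p(2t,z,y)$, and then sums this over $z\in B(x,\sqrt t)$ against $\mu$:
\[
V(x,\sqrt t)\,p(t,x,y)\ \le\ 2^{n_0}e^{4D}\!\!\sum_{z\in B(x,\sqrt t)}\!\!\mu(z)\,p(2t,z,y)\ \le\ 2^{n_0}e^{4D},
\]
where the last step uses the stochastic bound $\sum_{z\in V}\mu(z)p(2t,z,y)\le 1$ from Remark~\ref{rem:heatkpro}. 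This yields $p(t,x,y)\le C/V(x,\sqrt t)$ directly, with no volume comparison between $x$ and $y$ needed. To repair your proposal, replace the Cauchy--Schwarz step for the upper bound with this integrated Harnack argument; the on-diagonal bound \eqref{equ:14} that you invoke is in fact derived by exactly this mechanism and is a special case of it, not a stronger independent input.
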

\begin{proof}
The upper bound follows from the methods of \cite{BHLLMY13}, as the Harnack inequality obtained in that paper still applies for graphs satisfying $CDE'(n_0,0)$.  For completeness, we include the brief proof. From Corollary~\ref{coro:hanack}, for any $t>0$, choosing $s=2t$ and for any $z\in B(x,\sqrt{t})$, we have
\[p(t,x,y)
\leq p(2t,z,y)2^{n_0}\exp(4D).\]
Integrating the above inequality over $B(x,\sqrt{t})$ with respect to $z$, gives
\[
\begin{split}
p(t,x,y)
&\leq \frac{C}{V(x,\sqrt{t})}\sum_{z\in B(x,\sqrt{t})}\mu(z)p(2t,z,y)\\
&\leq \frac{C}{V(x,\sqrt{t})}.
\end{split}
\]

We now prove the lower bound estimate. Recall that we only claim the result under the assumption that $t > t_0$.  The result is most transparent if $t_0 > 1/2$.  In this case,  then taking $t > 1/2$ and choosing $2r^{2}=\varepsilon t$ for some $0<\varepsilon<1$, equation \eqref{equ:12} implies that every $x \in V$ satisfies
\begin{equation}
p(\varepsilon t,x,x)\geq \frac{\rho^*}{V(x,\sqrt{\frac{\varepsilon t}{2}})} \geq \frac{\rho^*}{V(x,\sqrt{t})}.
\label{equ:15}
\end{equation}
Applying Corollary~\ref{coro:hanack}, taking $\varepsilon t$ as `$t$', taking $t$ to be `$s$', and choosing $y=x, z=y$, we obtain
\begin{equation}p(\varepsilon t,x,x)\leq p(t,x,y)\varepsilon^{n_0}\exp\left(\frac{4Dd(x,y)^{2}}{(1-\varepsilon)t}\right).
\label{equ:16}
\end{equation}
Combining \eqref{equ:15} with \eqref{equ:16}, we finally obtain for any $t>\frac{1}{2}$,
\[p(t,x,y)\geq \frac{\varepsilon^{-n_0}\rho^*}{V(x,\sqrt{t})}\exp\left(-\frac{4Dd(x,y)^{2}}{(1-\varepsilon)t}\right)
=\frac{C'}{V(x,\sqrt{t})}\exp\left(-c'\frac{d(x,y)^{2}}{t}\right).\]
While we assumed that $t > \frac{1}{2}$ here, if we fix {\it any} $t_0 > 0$, it is easy to rework the proof Theorem~\ref{th:exp-ineq} to work with such an arbitrary $t_0$ and this completes the proof of the theorem.
\end{proof}

The remaining difficulty is verifiying that the lower bound holds when $t$ is small enough.  This we will defer until after proving discrete-time Gaussian estimate to Remark~\ref{cont-time-Gauss-lowb2}. Together, this will complete the proof of Theorem~\ref{th:continue-time-Gauss-estimate}.

As a special case, note that if $t\geq \max\left\{d(x,y)^{2},{\frac{1}{2}}\right\}$, then the lower estimate can be write
\begin{equation}\label{equ:hkernel-lower-bound2}
p(t,x,y)\geq \frac{C''}{V(x,\sqrt{t})}.
\end{equation}
Before we ultimately finish the continuous-time lower bound, we address the discrete-time estimate.  We begin with the on-diagonal estimate:
\begin{proposition}\label{pro:dis-time-on-diag}
Assume a graph $G$ satisfies $CDE'(n_{0},0)$ and $\Delta(\beta)$, then there exist $c_{d},C_{d}>0$, for any $x,y \in V$, such that,
\[p_{n}(x,y)\leq \frac{C_{d}m(y)}{V(x,\sqrt{n})},\quad \mbox{for~all}~n>0,\]
\[p_{n}(x,y)\geq \frac{c_{d}m(y)}{V(x,\sqrt{n})},\quad \mbox{if}~n\geq d(x,y)^{2}.\]
\end{proposition}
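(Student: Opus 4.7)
The proof splits into upper and lower bounds, both passing through the Poisson formula \eqref{equ:cont-disc-kernel} connecting the discrete and continuous heat kernels. The $\Delta(\beta)$ condition will serve to avoid the $\sqrt{n}$ loss that arises in the naive Poisson comparison.

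For the upper bound, I plan to first establish the on-diagonal estimate $p_n(x,x) \leq C m(x)/V(x,\sqrt{n})$. Applying \eqref{equ:cont-disc-kernel} with $t=n$ writes
\[\mathcal{P}_n(x,x) = e^{-n}\sum_{k \geq 0} \frac{n^k}{k!} p_k(x,x).\]
The single term at $k=n$ only gives $\mathcal{P}_n(x,x) \geq c\, p_n(x,x)/\sqrt{n}$, which is insufficient. To remove this $\sqrt{n}$, I would use two consequences of $\Delta(\beta)$: first, $p_{k+1}(x,x) \geq p(x,x)\,p_k(x,x) \geq \beta p_k(x,x)$, so that $p_k(x,x)$ changes by at most a factor of $\beta$ (or $\beta^{-1}$) between consecutive times; second, because the Markov operator $P$ is self-adjoint on $\ell^2(V,m)$ with $\|P\| \leq 1$, the sequence $p_{2n}(x,x)$ is monotone non-increasing in $n$. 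Together, these features allow one to bound a whole block of Poisson terms near $k=n$ simultaneously from below by a constant multiple of $p_n(x,x)$, yielding $\mathcal{P}_n(x,x) \geq c'\, p_n(x,x)$, after which the continuous-time upper bound of Theorem~\ref{th:continue-time-Gauss-estimate} completes this step. The off-diagonal upper bound then follows from the Cauchy--Schwarz bootstrap $h_n(x,y) \leq \sqrt{h_n(x,x)\,h_n(y,y)}$, where $h_n = p_n/m$, combined with the volume doubling of Theorem~\ref{th:doubling-volume} to pass from $V(y,\sqrt{n})$ back to $V(x,\sqrt{n})$.

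For the lower bound in the regime $n \geq d(x,y)^2$, I would imitate the strategy of Theorem~\ref{th:doubling-volume}. Cauchy--Schwarz applied to the cutoff $\mathbf{1}_{B(x,\sqrt{n})}$ yields
\[\Bigl(\sum_{z\in B(x,\sqrt{n})} p_n(x,z)\Bigr)^2 \leq V(x,\sqrt{n})\cdot\frac{p_{2n}(x,x)}{m(x)},\]
so that lower-bounding the left side by a constant $\rho^2 > 0$ produces the on-diagonal lower bound $p_{2n}(x,x) \geq \rho^2 m(x)/V(x,\sqrt{n})$. The needed lower bound on $\sum_{z} p_n(x,z)$ is exactly the discrete-time analogue of Theorem~\ref{th:exp-ineq}, which I would transfer from its continuous counterpart using \eqref{equ:cont-disc-kernel} together with the one-step comparison from $\Delta(\beta)$. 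A single application of $\Delta(\beta)$ then passes the estimate from $p_{2n}(x,x)$ to $p_n(x,x)$. Finally, the off-diagonal lower bound is obtained by chaining, using a discrete parabolic Harnack inequality analogous to Corollary~\ref{coro:hanack} and derived in the same way from the Li--Yau estimate of Section~\ref{sec:LY}; the hypothesis $n \geq d(x,y)^2$ is precisely what ensures that the compounded Harnack constant stays bounded independently of $n$.

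The principal technical obstacle is the sharpness of the comparison in the on-diagonal step above: the one-term Poisson lower bound costs a factor of $\sqrt{n}$, and eliminating this loss requires the simultaneous use of the spectral monotonicity of $p_{2n}(x,x)$ together with the one-step comparability from $\Delta(\beta)$. A secondary subtlety is to formulate the discrete analogue of the exponential integrability estimate of Theorem~\ref{th:exp-ineq} sharply enough to drive the lower bound.
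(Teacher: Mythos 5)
Your proposal takes a genuinely different route from the paper. The paper follows Delmotte's \cite{D99} coefficient-comparison argument: using $\Delta(\alpha)$, one writes $\overline{p}=p-\alpha\delta$, expands both
\[\mathcal{P}_n(x,y)=\sum_k a_k\,\overline{p}_k(x,y), \qquad p_n(x,y)=\sum_{k=0}^n b_k\,\overline{p}_k(x,y),\]
and invokes Lemma~\ref{lem:coeff-comp} to show that $b_k/a_k$ is bounded above for all $k$ (giving the upper bound via $p_n\leq C\mathcal{P}_n$) and bounded below on the Poisson concentration window $|k-(1-\alpha)n|\leq a\sqrt n$, with the remaining Poisson tail controlled via \eqref{equ:17} using the upper estimate applied to the modified kernel $p'=\overline p/(1-\alpha)$. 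This neatly couples the binomial and Poisson weights and never needs to localize the discrete kernel at a single exact time.

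Your on-diagonal upper-bound idea is essentially correct as a \emph{different} route: spectral monotonicity of $p_{2m}(x,x)$ plus a single $\Delta(\beta)$ parity fix does give $p_k(x,x)\geq\beta p_n(x,x)$ for even $k\leq n$, and the Poisson mass on those $k$ within $a\sqrt n$ of $n$ is bounded below, so $\mathcal{P}_n(x,x)\geq c\,p_n(x,x)$. But there are two genuine gaps downstream.

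First, your off-diagonal upper bound via Cauchy--Schwarz produces $p_n(x,y)\leq C m(y)\big/\sqrt{V(x,\sqrt n)V(y,\sqrt n)}$. Converting this to the asymmetric form $C_d m(y)/V(x,\sqrt n)$ requires $V(y,\sqrt n)\gtrsim V(x,\sqrt n)$, which volume doubling gives only when $d(x,y)\lesssim\sqrt n$; for $\sqrt n\ll d(x,y)\leq n$ the loss is a polynomial factor in $n$, so you do not recover the stated bound. The paper avoids this entirely because the continuous-time upper bound (Theorem~\ref{th:continue-time-Gauss-estimate}) is already centered at $x$, being a direct consequence of the explicit Harnack inequality (Corollary~\ref{coro:hanack}), and the discrete bound is inherited term by term.

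Second, and more seriously, the proposed ``transfer'' of Theorem~\ref{th:exp-ineq} to discrete time does not go through with the one-step $\Delta(\beta)$ comparison. The continuous-time estimate $P_{Ar^2}(\mathbf{1}_{B(x,r)})(x)\geq\rho$ is a Poisson average, centered at $k\approx Ar^2$ with spread $\sim\sqrt n$, of the discrete quantities $\sum_{z\in B}p_k(x,z)$. With $r^2\sim n$ you need the bound at $k=n$, a distance $\sim(1-A)n\gg\sqrt n$ from where the Poisson mass sits; the best one-step control from $\Delta(\beta)$ is $\sum_{z\in B}p_{k+1}(x,z)\geq\beta\sum_{z\in B}p_k(x,z)$, so crossing $(1-A)n$ steps costs $\beta^{(1-A)n}$, which is exponentially small. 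There is no spectral monotonicity for this quantity to save you as there was on the diagonal. This is exactly the obstruction that the paper's submarkovian expansion and Lemma~\ref{lem:coeff-comp} are designed to sidestep: it compares the \emph{coefficients} $a_k$ and $b_k$, not the kernel values at separated times, and only has to estimate a Poisson tail. To make your route rigorous you would essentially need to rebuild Theorem~\ref{th:exp-ineq} purely in discrete time, or adopt some substitute for the coefficient comparison.

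Finally, the off-diagonal lower bound by Harnack chaining is the standard strategy and is indeed what the paper does (in Theorem~\ref{th:dis-time-Gauss-estimate}), but note that at the level of the Proposition as stated the off-diagonal lower bound under $n\geq d(x,y)^2$ is derived in the paper directly from the coefficient comparison plus \eqref{equ:hkernel-lower-bound2}, without a chaining argument; the chaining is deferred to the full Gaussian lower bound.
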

This proposition follows the methods of Delmotte from \cite{D99}. To prove it, we first introduce some necessary results. Assume $\Delta(\alpha)$ is true (cf. Definition 2.1), so that we can consider the positive submarkovian kernel
\[\overline{p}(x,y)=p(x,y)-\alpha \delta_{xy}.\]
Now, compute $\mathcal{P}_{n}(x,y)$ and $p_{n}(x,y)$ with $\overline{p}(x,y)$,
\[\mathcal{P}_{n}(x,y)=e^{(\alpha-1)n}\sum_{k=0}^{+\infty}\frac{n^{k}}{k!}\overline{p}_k(x,y)=\sum_{k=0}^{+\infty}a_{k}\overline{p}_k(x,y),\]
\[p_{n}(x,y)=\sum_{k=0}^{n}C_{n}^{k}\alpha^{n-k}\overline{p}_k(x,y)=\sum_{k=0}^{n}b_{k}\overline{p}_k(x,y).\]
There is a lemma from \cite{D99} to compare the two sums as follows.
\begin{lemma}\label{lem:coeff-comp}
Let $c_{k}=\frac{b_{k}}{a_{k}}$, for $0\leq k \leq n$, and suppose $\alpha \leq \frac{1}{4}.$
\begin{itemize}
  \item  $c_{k}\leq C(\alpha)$, when $0\leq k\leq n$,
  \item  $c_{k}\geq C(a,\alpha)>0$, when $n\geq \frac{a^{2}}{\alpha^{2}}$ and $|k-(1-\alpha)n|\leq a\sqrt{n}$.
\end{itemize}
%We shall consider only $\alpha\leq \frac{1}{4}$, so that we always have $\frac{n}{2}\leq k\leq n$ in the second assertion.
\end{lemma}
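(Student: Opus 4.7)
The plan is to write $c_k$ as an explicit ratio, simplify it by Stirling's formula, and then reduce both claims to the analysis of a single concave auxiliary function. Substituting the definitions of $a_k$ and $b_k$,
\[
c_k = \frac{n!\, \alpha^{n-k}}{(n-k)!\, n^k}\, e^{(1-\alpha)n}.
\]
Changing variables to $j = n-k$ and applying Stirling's formula to $n!$ and $j!$ (for $j\ge 1$) yields, up to a multiplicative factor universally bounded above and below,
\[
c_k \;\asymp\; \sqrt{n/j}\, \left(\frac{n\alpha}{j}\right)^{j} e^{j - \alpha n}.
\]
Setting $f(j) := j\log(n\alpha/j) + j - \alpha n$, one has $\log c_k = \tfrac{1}{2}\log(n/j) + f(j) + O(1)$, and a direct computation shows $f'(j) = \log(n\alpha/j)$, $f''(j) = -1/j$. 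Thus $f$ is concave with unique maximum $f(n\alpha) = 0$ at $j = n\alpha$, corresponding to $k = (1-\alpha)n$.

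For the first claim, since $f \le 0$ everywhere, $\log c_k \le \tfrac{1}{2}\log(n/j) + O(1)$. The only regime where $\tfrac{1}{2}\log(n/j)$ is not automatically of order $\log(1/\alpha)$ is $j$ small compared with $n\alpha$; in that range the strict concavity of $f$ yields a large negative contribution, already visible from $f(n\alpha/2) = (n\alpha/2)(\log 2 - 1) = \Theta(-n)$, which dwarfs any logarithmic growth of $\tfrac{1}{2}\log(n/j)$. Combining the two regimes gives $c_k \le C(\alpha)$. The boundary case $j = 0$ (that is, $k = n$) must be treated separately since Stirling does not apply to $0!$, but $c_n = (n!/n^n)\, e^{(1-\alpha)n} \asymp \sqrt{n}\, e^{-\alpha n}$ is clearly bounded for any fixed $\alpha > 0$.

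For the second claim, translate the hypothesis $|k - (1-\alpha)n| \le a\sqrt{n}$ into $|j - n\alpha| \le a\sqrt{n}$, and use $n \ge a^2/\alpha^2$ (adjusting the constant by a harmless factor if necessary) to ensure $j \ge n\alpha/2$. A second-order Taylor expansion of $f$ about $j = n\alpha$, combined with $f(n\alpha) = f'(n\alpha) = 0$ and the bound $|f''(\xi)| \le 2/(n\alpha)$ valid for $\xi$ between $j$ and $n\alpha$, produces $|f(j)| \le a^2/\alpha$. Simultaneously $\tfrac{1}{2}\log(n/j) \ge \tfrac{1}{2}\log(2/(3\alpha))$ throughout this regime, hence $\log c_k \ge -C(a,\alpha)$ and the lower bound follows. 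The main technical nuisance is keeping the Stirling and Taylor remainders uniform over the relevant range of $j$, but the condition $j \ge n\alpha/2$ in part two and the explicit negativity of $f$ in the tails in part one reduce this to a routine exercise rather than a delicate one.
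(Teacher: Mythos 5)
The paper itself supplies no proof of this lemma --- it is quoted directly from Delmotte's paper [D99] and used as a black box --- so there is no internal argument to compare your proposal against. Judged on its own merits, your proof is sound. The explicit formula $c_k = \frac{n!\,\alpha^{n-k}}{(n-k)!\,n^k}e^{(1-\alpha)n}$ is correct, the Stirling reduction to $c_k \asymp \sqrt{n/j}\,(n\alpha/j)^j e^{j-\alpha n}$ with $j=n-k$ is right (with multiplicative constants universal once $j\geq 1$), and the identification of $f(j)=j\log(n\alpha/j)+j-\alpha n$ as a concave function with unique maximum $f(n\alpha)=0$ is the correct organizing observation.

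Two refinements are worth noting. First, the lower bound as you argue it genuinely requires the hypothesis $n\geq a^2/\alpha^2$ to be strengthened to $n\geq Ca^2/\alpha^2$ for some constant $C>1$ (say $C=4$): under the stated threshold one can have $j=n\alpha-a\sqrt{n}=0$ exactly at $n=a^2/\alpha^2$, and more generally the intermediate point $\xi$ in the Taylor remainder is not bounded below by a positive multiple of $n\alpha$, so the estimate $|f''(\xi)|\leq 2/(n\alpha)$ fails. You flag this, and it is indeed harmless for the paper's application, where the threshold $N$ can simply absorb the extra factor. To prove the lemma literally as stated, one can add that the residual regime $a^2/\alpha^2\leq n< Ca^2/\alpha^2$, intersected with $|j-n\alpha|\leq a\sqrt{n}$, contributes only finitely many pairs $(n,k)$ with a bound depending only on $a,\alpha$, and for each such pair $c_k>0$ trivially; the minimum over that finite set is then folded into $C(a,\alpha)$. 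Second, in the upper bound the heuristic that the $\Theta(-n)$ size of $f$ ``dwarfs'' the $\tfrac12\log(n/j)$ term is correct but implicitly requires maximizing over $n$ to obtain a bound depending only on $\alpha$; a crisper route is to observe that $g(j)=\tfrac12\log(n/j)+f(j)$ satisfies $g'(j)=\log(n\alpha/j)-\tfrac{1}{2j}\geq \log 2-\tfrac12>0$ for $1\leq j\leq n\alpha/2$, so $g$ is increasing on that interval and $g(j)\leq g(n\alpha/2)\leq \tfrac12\log(2/\alpha)$, with no optimization over $n$ needed.
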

Note that the condition that $\alpha \leq \frac{1}{4}$ implies that $\frac{n}{2} \leq k \leq n$ in the second assertion.  Note that assuming $\alpha \leq \frac{1}{4}$ does not inhibit us: it is clear from the definition that if $\Delta(\alpha)$ holds, so does $\Delta(\alpha')$ for any $\alpha' < \alpha$.

Now we turn to the proof of Proposition~\ref{pro:dis-time-on-diag}.
\begin{proof}[Proof of Proposition \ref{pro:dis-time-on-diag} ]
The proof comes from Delmotte of \cite{D99}.

The first assertion in Lemma~\ref{lem:coeff-comp} implies, for any $n\in \mathbb{N}$,
\[p_{n}(x,y)\leq C(\beta)\mathcal{P}_{n}(x,y).\]
The upper bound, then, is an immediate consequence of Theorem~\ref{th:continue-time-Gauss-estimate}:  For any $x,y\in V$,
\[p_{n}(x,y)\leq \frac{C(\beta)Cm(y)}{V(x,\sqrt{n})}=\frac{C_{d}m(y)}{V(x,\sqrt{n})}.\]
The second assertion is a little more complicated.

Suppose, for a minute, that for any $\varepsilon > 0$, there exists an $a > 0$ such that
\begin{equation}
\sum_{|k-(1-\alpha)n|>a\sqrt{n}}a_{k}\overline{p}_k(x,y)\leq\frac{\varepsilon m(y)}{V(x,\sqrt{n})}.
\label{equ:17}
\end{equation}
We return briefly to prove that such an $a$ always exists.

Fix such an $a$ for a sufficiently small $\varepsilon$, taking, say, \[0 < \varepsilon < \frac{1}{2} C' \leq \frac{1}{2} \mathcal{P}_n(x,y) \cdot \frac{V(x,\sqrt{n})}{m(y)}.\]  We set $\alpha=\frac{\beta}{2}$, and $n\geq N=\frac{a^{2}}{\alpha^{2}}$.

For such choices we have, by the second assertion of Lemma \ref{lem:coeff-comp}, that
\begin{align*}
 p_{n}(x,y)
&\geq\sum_{|k-(1-\alpha)n|\leq a\sqrt{n}}b_{k}\overline{p}_k(x,y)\\
&\geq C(a,\alpha)\sum_{|k-(1-\alpha)n|\leq a\sqrt{n}}a_{k}\overline{p}_k(x,y),
\end{align*}
and furthermore
\[
\begin{split}
&C(a,\alpha)\mathcal{P}_{n}(x,y)\\
=&C(a,\alpha)\sum_{|k-(1-\alpha)n|\leq a\sqrt{n}}a_{k}\overline{p}_k(x,y)+C(a,\alpha)\sum_{|k-(1-\alpha)n|> a\sqrt{n}}a_{k}\overline{p}_k(x,y)\\
\leq& p_{n}(x,y)+C(a,\alpha)\sum_{|k-(1-\alpha)n|>a\sqrt{n}}a_{k}\overline{p}_k(x,y)\\
\leq& p_{n}(x,y)+C(a,\alpha)\frac{\varepsilon m(y)}{V(x,\sqrt{n})}.
\end{split}
\]
Since we assume $n\geq d(x,y)^{2}$, applying the second assertion of \eqref{equ:hkernel-lower-bound2}, we obtain
\[
\begin{split}
p_{n}(x,y)&\geq C(a,\alpha)\left(\mathcal{P}_{n}(x,y)-\frac{\varepsilon m(y)}{V(x,\sqrt{n})}\right)\\
&\geq C(a,\alpha)\left(\frac{C' m(y)}{V(x,\sqrt{n})}-\frac{\varepsilon m(y)}{V(x,\sqrt{n})}\right)\\
&= \frac{c_{d}m(y)}{V(x,\sqrt{n})},
\end{split}
\]
as we desired.

Thus it remains to prove that \eqref{equ:17} can be satisfied. First we consider another, slightly modified, Markov kernel
$p'=\frac{\overline{p}}{1-\alpha}$.  Such a kernel is generated by weights $\omega'_{xy}$ as follows:
\begin{align*}
\omega'_{xx} &=\frac{\omega_{xx}-\alpha m(x)}{1-\alpha}\geq \alpha m(x),\quad\forall x\in V,\\
\omega'_{xy}&=\frac{\omega_{xy}}{1-\alpha},\quad\forall x\neq y \in V,\\
m'(x)&=m(x).
\end{align*}
Note that $\Delta(\alpha)$ is true in $G$ with the new weights.  The condition $CDE'(n_{0},0)$ is also still holds for the new weights, because if one lets $\Delta'$ be the new Laplacian for $\omega'_{xy}$, then for any $f,g \in V^{\mathbb{R}}$ we obtain:
\[\Delta'f(x)=\frac{1}{1-\alpha}\Delta f(x), \quad \Gamma'(f,g)=\frac{1}{1-\alpha}\Gamma(f,g),\]
\[\Gamma_{2}'(f,g)=\frac{1}{(1-\alpha)^{2}}\Gamma_{2}(f,g), \quad \widetilde{\Gamma}_{2}'(f)=\frac{1}{(1-\alpha)^{2}}\widetilde{\Gamma}_{2}(f).\]
Furthermore, the process of proving $DV(C)$ still works when adding loops to every point of graph. Then $DV(C)$ is still satisfied for the new weights. This yields
\[p_{k}'(x,y)\leq \frac{C'_{d}m(y)}{V(x,\sqrt{k})},\]
and hence
\[\overline{p}_{k}(x,y)\leq \frac{C_{d}'m(y)(1-\alpha)^{k}}{V(x,\sqrt{k})}.\]
Next, we have to get the estimate as follows
\[e^{(\alpha-1)n}\sum_{|k-(1-\alpha)n|>a\sqrt{n}} \frac{((1-\alpha)n)^{k}}{k!}\frac{1}{V(x,\sqrt{k})}\leq\frac{\varepsilon' }{V(x,\sqrt{n})}.\]
The sum for $k>a\sqrt{n}+(1-\alpha)n$ is easier because we simply use
\[V(x,\sqrt{k})\geq V\left(x,\sqrt{\frac{n}{2}}\right)\geq V\left(x,\frac{\sqrt{n}}{2}\right)\geq \frac{V(x,\sqrt{n})}{C_{1}},\]
so we have,
\[
\begin{split}
&e^{(\alpha-1)n}\sum_{k>a\sqrt{n}+(1-\alpha)n} \frac{((1-\alpha)n)^{k}}{k!}\frac{1}{V(x,\sqrt{k})}\\
&\leq e^{(\alpha-1)n}\frac{C_{1}}{V(x,\sqrt{n})}\sum_{k>a\sqrt{n}+(1-\alpha)n} \frac{((1-\alpha)n)^{k}}{k!}\\
&\leq e^{(\alpha-1)n}\frac{C_{1}}{V(x,\sqrt{n})} \frac{((1-\alpha)n)^{(1-\alpha)n+a\sqrt{n}}}{(a\sqrt{n}
+(1-\alpha)n)!}\frac{1}{1-\frac{(1-\alpha)n}{a\sqrt{n}+(1-\alpha)n}}\\
&\leq \frac{CC_{1}}{V(x,\sqrt{n})}\exp\left(a\sqrt{n}-(a\sqrt{n}+(1-\alpha)n)
\log\left(1+\frac{a}{(1-\alpha)\sqrt{n}}\right)\right)\\
&\cdot \frac{1}{\sqrt{a\sqrt{n}+(1-\alpha)n}}
\frac{a\sqrt{n}+(1-\alpha)n}{a\sqrt{n}}\\
&\leq \frac{\varepsilon' }{2 V(x,\sqrt{n})},
\end{split}
\]
for our (arbitrary) choice of $\varepsilon'$, so long as $a$ is sufficiently large.  Note that the second to last inequality follows from the fact that $k!\geq \frac{ k^ke^{-k}\sqrt{k}}{C}$.  To see that this last line holds, note that as we assume $n \geq \frac{a^2}{\alpha^2}$, the inequality  $\frac{1}{\sqrt{a\sqrt{n}+(1-\alpha)n}}
\frac{a\sqrt{n}+(1-\alpha)n}{a\sqrt{n}}\leq \frac{1}{a}$ holds.  Finally observe that, by the real number inequality $\log(1+u) \geq \frac{u}{1+u} +  \frac{u^2}{2(1+u)^2}$, then the exponential is negative and for a sufficiently large $a$ the claim holds.

Remark~\ref{rem:general-doubling-volume} allows us to deal with $1\leq k<-a\sqrt{n}+(1-\alpha)n$.  It gives
\[V(x,\sqrt{k})\leq C\left(\frac{\sqrt{k}}{\sqrt{k-1}}\right)^{\frac{\log C}{\log 2}}V(x,\sqrt{k-1})\leq C_{2}V(x,\sqrt{k-1}).\]
Thus, for the terms $1\leq k\leq \frac{(1-a)n}{2C_{2}}$, we have
\[
\frac{((1-\alpha)n)^{k-1}}{(k-1)!}\frac{1}{V(x,\sqrt{k-1})}\leq \frac{1}{2}\frac{((1-\alpha)n)^{k}}{k!}\frac{1}{V(x,\sqrt{k})},
\]
and the estimate is straightforward. For the other term $ \frac{(1-a)n}{2C_{2}}< k<-a\sqrt{n}+(1-\alpha)n$, and using Remark~\ref{rem:general-doubling-volume} again
\[V(x,\sqrt{k})\geq V\left(x,\sqrt{\frac{(1-a)n}{2C_{2}}}\right)\geq C_{3}V(x,\sqrt{n}).\]
This completes the proof.
\end{proof}

In proving the upper bounds of the discrete-time Gaussian estimate on graphs, it is useful to introduce the following result from \cite{CG98}.
\begin{theorem}\label{th:equ-pro}
For a reversible nearest neighborhood random walk on the locally
finite graph $G=(V,E)$, the following properties are equivalent:
\begin{enumerate}
  \item The relative Faber-Krahn inequality $(FK)$.
  \item The discrete-time Gaussian upper estimate in conjunction with the doubling property $DV(C)$.
  \item The discrete-time on-diagonal upper estimate in conjunction with the doubling property $DV(C)$.
\end{enumerate}
\end{theorem}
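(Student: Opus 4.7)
The plan is to prove the three conditions equivalent via the cycle $(2) \Rightarrow (3) \Rightarrow (1) \Rightarrow (3) \Rightarrow (2)$, with the genuinely deep step being the upgrade from on-diagonal decay to full off-diagonal Gaussian decay. The ``diagonal'' equivalence $(1) \Leftrightarrow (3)$ is a discrete analogue of Grigor'yan's characterization of the heat kernel via isoperimetric-type inequalities, whereas $(3) \Rightarrow (2)$ is essentially Davies' perturbation method adapted to discrete time.

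The direction $(2) \Rightarrow (3)$ is immediate: set $y=x$ in the Gaussian upper bound so that the exponential factor is $1$, and volume doubling is already part of both hypotheses. For $(3) \Rightarrow (1)$ I would use spectral theory on finite domains. Given finite $\Omega \subset B(x,r)$, the smallest Dirichlet eigenvalue $\lambda_1(\Omega)$ of $-\Delta_\Omega$ controls the decay $\|P_n^\Omega\|_{2 \to 2} \leq (1-\lambda_1(\Omega))^n$, while the domain monotonicity $p_n^\Omega \leq p_n$ combined with the on-diagonal upper bound yields $\|P_n^\Omega\|_{1 \to \infty} \leq C/V(x,\sqrt n)$. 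An interpolation of the form $\|P_{2n}^\Omega \mathbf{1}_\Omega\|_2^2 \leq \|P_n^\Omega\|_{1\to\infty} V(\Omega)$, combined with the spectral lower bound, at the scale $n\sim r^2$ and after invoking doubling to absorb $V(x,\sqrt n)$ against $V(x,r)$, produces the required lower bound $\lambda_1(\Omega) \geq (c/r^2)(V(x,r)/V(\Omega))^a$.

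For $(1) \Rightarrow (3)$ I would convert Faber-Krahn into a localized Nash inequality: the variational characterization of $\lambda_1$ gives, for any $f$ supported in a ball $B(x,r)$,
\[
\|f\|_2^{2+4/\nu} \leq C\, r^2\, V(x,r)^{-2/\nu}\, \mathcal{E}(f,f)\, \|f\|_1^{4/\nu}.
\]
Running the classical Nash iteration on $J(n) = \|P_n f\|_2^2$, using the discrete-time dissipation $J(n+1)-J(n) \leq -\mathcal{E}(P_n f, P_n f)$ (which holds thanks to $\Delta(\alpha)$-style control on the one-step Markov operator), yields on-diagonal decay $p_{2n}(x,x) \leq Cm(x)/V(x,\sqrt n)$. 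Volume doubling is either extracted from Faber-Krahn through a comparison between $V(x,r)$ and $V(x,2r)$, or is inherited from the hypothesis as written.

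The hard part is $(3) \Rightarrow (2)$. Here I would invoke Davies' perturbation method: for any bounded Lipschitz $\psi : V \to \mathbb{R}$, introduce the twisted kernel $p_n^\psi(x,y) = e^{\psi(y)-\psi(x)} p_n(x,y)$ and show that the corresponding twisted operator satisfies $\|P_1^\psi\|_{2\to 2} \leq e^{\Lambda(\psi)}$ for a quantity $\Lambda(\psi)$ of order $\|\psi\|_{\rm Lip}^2$; iterating gives $\|P_n^\psi\|_{2 \to 2} \leq e^{n\Lambda(\psi)}$. Combining with the on-diagonal bound from (3) via Cauchy-Schwarz on $p_n^\psi(x,y) = \sum_z m(z) p_{n/2}^\psi(x,z) p_{n/2}^\psi(z,y)$, and using doubling to trade $V(y,\sqrt n)$ for $V(x,\sqrt n)$ up to a constant, one obtains
\[
p_n(x,y) \leq \frac{Cm(y)}{V(x,\sqrt n)}\, e^{\psi(x)-\psi(y)+n\Lambda(\psi)}.
\]
Choosing $\psi(z) = \lambda d(x,z)$ and optimizing over $\lambda>0$ produces the Gaussian factor $e^{-c\, d(x,y)^2/n}$. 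The principal obstacle is the discrete analogue of the chain rule for the twisted Dirichlet form: unlike the manifold setting, the bound on $\Lambda(\psi)$ must be derived from the pointwise estimate $(e^{\psi(y)-\psi(x)}-1)^2 \leq C \|\psi\|_{\rm Lip}^2 e^{2\|\psi\|_{\rm Lip}}$ on edges $xy$, and the nearest-neighbor structure coming from $\Delta(\alpha)$ is what allows this edge-by-edge control to close up into an $\ell^2$ operator bound.
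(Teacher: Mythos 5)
The paper does not actually prove Theorem~\ref{th:equ-pro}; it is imported verbatim from Coulhon and Grigor'yan \cite{CG98} (the sentence preceding the theorem reads ``it is useful to introduce the following result from \cite{CG98}''). So there is no internal proof for your argument to be measured against; what you have written is a reconstruction of a known external result, and the right comparison is with the Coulhon--Grigor'yan argument itself.

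Judged on those terms your sketch is on the right track and follows essentially the same circle of ideas (Nash iteration from a local Faber--Krahn inequality, spectral comparison to recover Faber--Krahn from on-diagonal decay, and an exponential-weight perturbation to upgrade on-diagonal to off-diagonal bounds). Two places deserve more care. First, in $(1)\Rightarrow(3)$ you remark that volume doubling is ``either extracted from Faber--Krahn \ldots\ or inherited from the hypothesis as written,'' but in item~(1) the relative Faber--Krahn inequality stands alone, with no doubling hypothesis attached; one of the substantive points of \cite{CG98} is precisely that the relative Faber--Krahn inequality by itself implies volume doubling, and that implication is not a throwaway. Second, in $(3)\Rightarrow(2)$ the Davies perturbation in discrete time is more delicate than your outline suggests: the twisted one-step operator $P^\psi$ is not submarkovian, and to bound $\|P^\psi\|_{2\to 2}$ uniformly one genuinely needs the uniform ellipticity built into the $\Delta(\alpha)$ condition (lazy walk plus bounded one-step ratios). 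Coulhon and Grigor'yan handle the on- to off-diagonal upgrade via an integrated maximum principle for discrete time rather than a literal Davies twist; the perturbation route can be made to work, but the pointwise edge estimate $(e^{\psi(y)-\psi(x)}-1)^2\le C\|\psi\|_{\mathrm{Lip}}^2 e^{2\|\psi\|_{\mathrm{Lip}}}$ by itself does not close into the required $\ell^2$ bound without the laziness assumption, and the resulting Gaussian bound only holds on the range $d(x,y)\le n$ (as the theorem's statement of $(G)$ indeed reflects). These are fixable, but they are exactly the technical content of the cited reference, and your proposal should flag that it is reproving \cite{CG98} rather than a result established in the present paper.
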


Now we show the final theorem of the discrete-time Gaussian estimate.
\begin{theorem}\label{th:dis-time-Gauss-estimate}
Assume a graph $G$ satisfies $CDE'(n_{0},0)$ and $\Delta(\alpha)$, then the graph satisfies the discrete-time Gaussian estimate $G(c_{l},C_{l},C_{r},c_{r})$.
\end{theorem}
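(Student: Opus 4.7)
The theorem asserts both halves of the Gaussian estimate $G(c_l, C_l, C_r, c_r)$, so I will split the argument into the upper bound and the lower bound.

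For the upper bound, my plan is to invoke Theorem~\ref{th:equ-pro} directly. The random walk with transition probabilities $p(x,y) = \omega_{xy}/m(x)$ is reversible with respect to the vertex measure $m$ (since $m(x)p(x,y) = \omega_{xy} = \omega_{yx} = m(y)p(y,x)$), so the hypotheses of that equivalence theorem are met. Volume doubling $DV(C)$ is provided by Theorem~\ref{th:doubling-volume}, and the discrete-time on-diagonal upper estimate is the first conclusion of Proposition~\ref{pro:dis-time-on-diag}. The equivalence of conditions $(2)$ and $(3)$ in Theorem~\ref{th:equ-pro} then immediately upgrades the on-diagonal bound to the full Gaussian upper estimate.

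For the lower bound, the second conclusion of Proposition~\ref{pro:dis-time-on-diag} already delivers the result in the near-diagonal regime $d(x,y)^2 \leq n$, since there $\exp(-C_l d(x,y)^2/n)$ is bounded below by a constant. In the off-diagonal regime $d(x,y) \leq n < d(x,y)^2$, I plan a chaining argument built on the discrete semigroup identity $p_{n+m}(x,y) = \sum_z p_n(x,z) p_m(z,y)$. Writing $d = d(x,y)$, I would choose an integer $k$ comparable to $Ad^2/n$ for a sufficiently large absolute constant $A$, fix a geodesic chain $x = z_0, z_1, \ldots, z_k = y$ with $d(z_i, z_{i+1}) \leq \lceil d/k \rceil$, split $n$ into $k$ nearly equal pieces of common size $s \approx n/k$, and iterate the semigroup identity while restricting each intermediate summation to a ball $B_i = B(z_i, \sqrt{s})$. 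This yields
\[
p_n(x,y) \;\geq\; \sum_{w_i \in B_i,\; 1 \leq i \leq k-1} p_s(x, w_1)\, p_s(w_1, w_2)\, \cdots\, p_s(w_{k-1}, y).
\]
With $A$ chosen large, consecutive waypoints satisfy $d(w_i, w_{i+1})^2 \leq s$, so Proposition~\ref{pro:dis-time-on-diag} applies to each factor and gives $p_s(w_i, w_{i+1}) \geq c_d m(w_{i+1})/V(w_i, \sqrt{s})$. Volume doubling then supplies $V(w_i, \sqrt{s}) \leq C\, V(z_i, \sqrt{s})$, and $\sum_{w_i \in B_i} m(w_i) = V(z_i, \sqrt{s})$; the intermediate volumes telescope, leaving $p_n(x,y) \geq (c_d/C)^k\, m(y)/V(x, \sqrt{s}) \geq (c_d/C)^k\, m(y)/V(x, \sqrt{n})$, and since $k \lesssim d^2/n$ the prefactor is at least $\exp(-C_l d^2/n)$ for suitable $C_l$.

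The main obstacle lies in the bookkeeping of this chaining: the common time-step $s$ and the ball radius $\sqrt{s}$ must be chosen so that the on-diagonal estimate applies uniformly at every step (forcing $k \gtrsim d^2/n$) while remaining small enough that doubling provides uniform comparison between $V(w_i, \sqrt{s})$ and $V(z_i, \sqrt{s})$, so that the product telescopes cleanly and the compounded constant $(c_d/C)^k$ collapses into precisely the Gaussian exponential. Integer-rounding issues (the remainder of $n$ modulo $k$, and $\lceil d/k \rceil$ versus $d/k$) can be absorbed into constants without affecting the final estimate. The overall strategy mirrors Delmotte's chaining approach in~\cite{D99}, with our previously established doubling property and on-diagonal bounds replacing his hypotheses.
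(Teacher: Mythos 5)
Your proposal follows the paper's proof essentially step for step: the upper bound is obtained by feeding the on-diagonal upper bound of Proposition~\ref{pro:dis-time-on-diag} and volume doubling (Theorem~\ref{th:doubling-volume}) into the equivalence Theorem~\ref{th:equ-pro}, and the lower bound is obtained by a Delmotte-style chaining built on the near-diagonal lower bound and volume doubling. The paper states the chaining conditions abstractly (a decomposition $n=n_1+\cdots+n_j$, balls $B_i=B(x_i,r_i)$ with $j-1\lesssim d^2/n$, $r_i\geq c\sqrt{n_{i+1}}$, and $\sup_{z\in B_{i-1},z'\in B_i}d(z,z')^2\leq n_i$) rather than spelling out the geodesic construction, but the content is the same as yours.

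One detail worth flagging: with your stated ball radius $r_i=\sqrt{s}$ the near-diagonal hypothesis fails, since waypoints $w_i\in B(z_i,\sqrt{s})$ and $w_{i+1}\in B(z_{i+1},\sqrt{s})$ can be separated by as much as $2\sqrt{s}+d(z_i,z_{i+1})>\sqrt{s}$, so $d(w_i,w_{i+1})^2\leq s$ cannot hold no matter how large $A$ is. The standard fix (and what the paper's condition $r_i\geq c\sqrt{n_{i+1}}$ with a small constant $c$ implicitly encodes) is to take $r_i=\epsilon\sqrt{s}$ for some small fixed $\epsilon$, so that $2\epsilon\sqrt{s}+d/k\leq\sqrt{s}$ once $k\gtrsim d^2/n$ with the right constant, and then use one extra application of doubling to compare $V(w_i,\sqrt{s})$ with $V(z_i,\epsilon\sqrt{s})$. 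You do identify this bookkeeping as the main thing to check, and the fix is routine, so I would call this a loose end rather than a gap in the approach.
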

\begin{proof}
We have already observed that the discrete-time on-diagonal upper estimate and the doubling property $DV(C)$  both hold for graphs satisfying $CDE'(n_{0},0)$ and $\Delta(\alpha)$.  Theorem~\ref{th:equ-pro}  immediately implies the discrete-time Gaussian upper estimate.

The lower bound follows from the on-diagonal one. The strategy
is similar to Delmotte of \cite{D99}. We repeatedly apply the second assertion of Proposition~\ref{pro:dis-time-on-diag}. Set $n=n_{1}+n_{2}+\cdots+n_{j}$, $x=x_{0},x_{1},\cdots,x_{j}=y$ and $B_{0}={x}$, $B_{i}=B(x_{i},r_{i})$, $B_{j}={y}$, such that
\[
\left\{
  \begin{array}{ll}
    j-1\leq C\frac{d(x,y)^{2}}{n},  \\
    r_{i}\geq c\sqrt{n_{i+1}}, & \hbox{so that $V(z,\sqrt{n_{i+1}})\leq A V(B_{i})$, when $z\in B_{i}$,} \\
  \sup_{z\in B_{i-1},z'\in B_{i}}d(z,z')^{2}\leq n_{i},& \hbox{so that $p_{n_{i}}(z,z')\geq \frac{c_{d}m(z')}{V(z,\sqrt{n_{i}})}$.}
  \end{array}
\right.
\]
Such a decomposition allows us to immediately derive the lower bound.  Indeed,
\[
\begin{split}
p_{n}(x,y)
&\geq \sum_{(z_{1},\cdots,z_{j-1})\in B_{1}\times\cdots\times B_{j-1}} p_{n_{1}}(x,z_{1})p_{n_{2}}(z_{1},z_{2})\cdots p_{n_{j}}(z_{j-1},y)\\
&\geq \sum_{(z_{1},\cdots,z_{j-1})\in B_{1}\times\cdots\times B_{j-1}} \frac{c_{d}m(z_{1})}{V(x,\sqrt{n_{1}})} \frac{c_{d}m(z_{2})}{V(z_{1},\sqrt{n_{2}})}\cdots \frac{c_{d}m(y)}{V(z_{j-1},\sqrt{n_{j}})}\\
&\geq c_{d}^{j}A^{1-j}\sum_{(z_{1},\cdots,z_{j-1})\in B_{1}\times\cdots\times B_{j-1}}\frac{m(z_{1})}{V(x,\sqrt{n_{1}})} \frac{m(z_{2})}{V(B_{1})}\cdots \frac{m(y)}{V(B_{j})}\\
&=\frac{c_{d}m(y)}{V(x,\sqrt{n_{1}})}\left(\frac{c_{d}}{A}\right)^{j-1}.
\end{split}
\]

If we choose $C_{l}\geq C\log(\frac{A}{c_{d}})$, and $V(x,\sqrt{n_{1}})\leq V(x,\sqrt{n})$,  the Gaussian lower bound
\[p_{n}(x,y)\geq \frac{c_{d}m(y)}{V(x,\sqrt{n})} e^{-C_{l}\frac{d(x,y)^{2}}{n}},\]
and thus the theorem, follows.
\end{proof}
Now from the discrete-time Gaussian estimate, we can get the continuous-time Gaussian lower bound estimate when $t$ is small enough we have mentioned before in Theorem~\ref{th:continue-time-Gauss-estimate}, as follows.
\begin{remark}\label{cont-time-Gauss-lowb2}
If $t$ is small enough, under $CDE'(n_0,0)$, for any $x,y\in V$,
\[\mathcal{P}_{t}(x,y)\geq \frac{C'm(y)}{V(x,\sqrt{t})}\exp\left(-c'\frac{d(x,y)^{2}}{t}\right)\]
holds too.
\end{remark}
\begin{proof}
If $d(x,y)=0$, $\mathcal{P}_{t}(x,y)=m(y)p(t,x,y)\rightarrow1$, when $t\rightarrow0^+$. It naturally satisfies the lower bound.

Now we consider $d(x,y)>0$. When $k<d(x,y)$, then $p_k(x,y)=0$. When $k\geq d(x,y)$, form the discrete-time Gaussian lower bound, the relationship between the continuous-time heat kernel and the discrete-time heat kernel \eqref{equ:cont-disc-kernel}, and the polynomial volume growth of Corollary 7.8 in \cite{BHLLMY13}, which says under $CDE(n_0,0)$, there exists a constant $C_0>0$, such that
\[V(x,\sqrt{t})\leq C_0\mu(x)t^{n_0},\]
for $CDE'(n_0,0)\Rightarrow CDE(n_0,0)$, and $\mu(x)=m(x)$ in this section, we obtain
\[\begin{split}
\mathcal{P}_{t}(x,y)=e^{-t}\sum_{k=0}^{+\infty}\frac{t^{k}}{k!}p_{k}(x,y)
&\geq e^{-t}\sum_{k=d(x,y)}^{+\infty}\frac{t^{k}}{k!}\frac{c_{d}m(y)}{V(x,\sqrt{k})} e^{-C_{l}\frac{d(x,y)^{2}}{k}}\\
&\geq e^{-t}\sum_{k=d(x,y)}^{+\infty}\frac{t^{k}}{k!}\frac{c_{d}m(y)}{C_0k^{n_0}m(x)} e^{-C_{l}d(x,y)}\\
&\geq\frac{c_{d}e^{-t}}{C_0}\frac{m(y)}{m(x)}\cdot\frac{e^{-C_{l}d(x,y)}t^{d(x,y)}}{d(x,y)^{n_0}d(x,y)!}\\
&\geq\frac{C'm(y)}{V(x,\sqrt{t})}\exp\left(-c'\frac{d(x,y)^{2}}{t}\right),~~~~(t\rightarrow 0^+). \\
\end{split}\]
In the last step, when $t$ is small enough, $V(x,\sqrt{t})=m(x)$, and $e^{-t}$ is bounded. Furthermore, for any $x,y\in V$, $m=d(x,y)$ is finite in connected graphs, and it mainly dues to if $t\rightarrow 0^+$, the following function for all $m\in \mathbb{Z}_+$,
$$f(t,m)=-\frac{t}{m^2}\ln\left(\frac{e^{-C_{l}m}t^{m}}{m^{n_0}m!}\right)$$
has positive bounds. Since for all $m\in \mathbb{Z}_+$,
\[f(t,m)=-\frac{t\ln t}{m}+t\left(\frac{C_{l}}{m}+n_0\frac{\ln m}{m^2}+\frac{\ln m!}{m^2}\right)\rightarrow 0,\]
when $t\rightarrow 0^+$. So we can always find a $c'>0$ independent of $d(x,y)$ and the above inequality holds. This completes the proof.
\end{proof}
Here we have the following result.
\begin{theorem}\label{maintheorem1}
If the graph satisfies $CDE'(n_{0},0)$ and $\Delta(\alpha)$, we have the following four properties.
\begin{enumerate}
  \item[$1)$]{} There exists $C_1, C_2, \alpha>0$ such that $DV(C_1)$, $P(C_2)$, and $\Delta(\alpha)$ are true.
  \item[$2)$]{} There exists $c_{l},C_{l},C_{r},c_{r}>0$ such that $G(c_{l},C_{l},C_{r},c_{r})$ is true.
  \item[$3)$]{} There exists $C_H$ such that $H(\eta,\theta_1,\theta_2,\theta_3,\theta_4,C_H)$ is true.
  \item[$3)'$]{} There exists $C_\mathcal{H}$ such that $\mathcal{H}(\eta,\theta_1,\theta_2,\theta_3,\theta_4,C_\mathcal{H})$ is true.
\end{enumerate}
\end{theorem}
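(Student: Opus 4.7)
The plan is to combine the substantial groundwork laid in Sections \ref{sec:VG} and \ref{sec:GE} with Delmotte's characterization theorem from \cite{D99}, which provides an equivalence, on any graph satisfying $\Delta(\alpha)$, between the conjunction (volume doubling $+$ Poincar\'e inequality) and each of: the discrete-time Gaussian estimate $G$, the discrete-time parabolic Harnack inequality $H$, and the continuous-time parabolic Harnack inequality $\mathcal{H}$. The strategy is to verify two of these four equivalent conditions directly using the machinery of the exponential curvature-dimension inequality, and then invoke Delmotte's circle of equivalences to propagate them to the remaining two.

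First I would dispatch the easy portions: the condition $\Delta(\alpha)$ is a hypothesis, $DV(C_1)$ is precisely the content of Theorem \ref{th:doubling-volume}, and the Gaussian estimate $G(c_l, C_l, C_r, c_r)$ asserted in part $(2)$ is exactly Theorem \ref{th:dis-time-Gauss-estimate}. Thus parts $(1)$ and $(2)$ are already in hand except for the Poincar\'e inequality $P(C_2)$. At this stage the entire theorem reduces to showing that the Poincar\'e inequality and the two Harnack inequalities all follow from what has been established.

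Next I would apply Delmotte's equivalence: since we have verified $\Delta(\alpha)$ and the discrete-time Gaussian estimate $G$, Delmotte's theorem immediately yields the Poincar\'e inequality $P(C_2)$ (completing part $(1)$), the discrete-time Harnack inequality $H(\eta,\theta_1,\theta_2,\theta_3,\theta_4,C_H)$ of part $(3)$, and the continuous-time Harnack inequality $\mathcal{H}(\eta,\theta_1,\theta_2,\theta_3,\theta_4,C_\mathcal{H})$ of part $(3)'$. The specific constants $\eta$ and $\theta_i$ in the Harnack statements are those furnished by Delmotte.

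The principal \emph{conceptual} obstacle, insofar as there is one, is only to ensure that the setup of this paper aligns with Delmotte's framework. This alignment is by design: the loop condition $\Delta(\alpha)$ was introduced in Section \ref{sec:statement} precisely to match Delmotte's hypothesis (eliminating bipartiteness-style parity problems and keeping edge weights non-degenerate), and throughout Section \ref{sec:GE} we specialize to the normalized Laplacian with $\mu(x)=m(x)$, so that the heat kernel coincides with that of the reversible nearest-neighbor random walk Delmotte considers. Consequently no further checking of hypotheses is needed, and the proof is essentially a bookkeeping exercise combining Theorems \ref{th:doubling-volume} and \ref{th:dis-time-Gauss-estimate} with the citation to \cite{D99}. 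The real mathematical content of Theorem \ref{maintheorem1} was already absorbed by the earlier sections, where $CDE'(n_0,0)$ was used to force $DV$ via the exponential integrability bound of Theorem \ref{th:exp-ineq}, and then to force Gaussian bounds via Corollary \ref{coro:hanack} combined with volume doubling.
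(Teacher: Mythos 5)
Your proposal is correct and follows essentially the same route as the paper's own (very terse) proof: establish $DV(C_1)$ from Theorem~\ref{th:doubling-volume}, the Gaussian estimate $G$ from Theorem~\ref{th:dis-time-Gauss-estimate}, and then invoke Delmotte's circle of equivalences in \cite{D99} to obtain $P(C_2)$, $H$, and $\mathcal{H}$. Your additional remarks on why the paper's setup (the $\Delta(\alpha)$ condition and the normalized Laplacian $\mu=m$) aligns with Delmotte's framework are a helpful elaboration of points the paper leaves implicit, but do not constitute a different argument.
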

\begin{proof}
The condition $CDE'(n_{0},0)$ implies $DV(C_1)$ (see Theorem~\ref{th:doubling-volume}), and Theorem~\ref{th:dis-time-Gauss-estimate} states that $CDE'(n_{0},0)$ and $\Delta(\alpha)$ implies $G(c_{l},C_{l},C_{r},c_{r})$. According to Delmotte of \cite{D99}, $P(C_2)$ is true. Moreover, $3)$ and $3)'$ hold too.
\end{proof}

\section{Diameter bounds} \label{sec:DB}
In this section we will show another application of Theorem \ref{th:main-var-ineq}.  We prove that positively curved graphs (that is graphs satisfying $CDE'(n,K)$ for some $K > 0$ are finite.  In order to prove this, we
consider an alternate to the natural distance function on a graph, the so-called canonical distance and diameter of $G$ associated with a Laplace operator $\Delta:$
\[\widetilde{d}(x,y)=\sup_{f\in \ell^{\infty}(V,\mu),\|\Gamma(f)\|_{\infty}\leq1}|f(x)-f(y)|,~~x,y\in V.\]
\[\widetilde{D}=\sup_{x,y\in V} \widetilde{d}(x,y).\]
In this section we are concerned with simple, connected and loopless graphs.
\subsection{Global heat kernel bounds}

In this subsection we derive a global heat kernel bound by proving finite measure under the assumption of positive curvature on graphs, and use this to establish that the diameter is finite.  We accomplish this in several steps the most crucial of them being an estimate proving that the total measure of the graph is finite.

In Theorem~\ref{th:main-var-ineq}, we choose the function $\gamma$ in a such a way that,
$$\alpha'-\frac{4\alpha\gamma}{n}+2\alpha K=0,$$
that is
$$\gamma=\frac{n}{4}\left(\frac{\alpha'}{\alpha}+2K\right).$$
Integrating both sides of the inequality~\eqref{equ:main-var-ine} from $0$ to $T$, we obtain
\begin{equation}
\alpha(T)\frac{P_T(\Gamma(\sqrt{f}))}{P_Tf}-\alpha(0)\frac{\Gamma(\sqrt{P_Tf})}{P_Tf}\geq \frac{2}{n}\left(\int_0^T \alpha\gamma dt\right) \frac{\Delta P_T(f)}{P_Tf}-\frac{2}{n}\int_0^T \alpha\gamma^2 dt. \label{equ:18}
\end{equation}
Now we introduce the first result in this subsection.

\begin{proposition}\label{pro:difference-ineq-hkernel}
Let $G=(V,E)$ be a locally finite, connected graph satisfying $CDE'(n,K)$. Then for all $0< \theta <K$ and $t_0>0$, there exists a $C_1>0$ such that for every non-negative $f$ satisfying $||f||_{\infty} \leq 1$, and every $t \geq t_0$,
\[|\sqrt{P_tf}(x)-\sqrt{P_tf}(y)|\leq C_1e^{-\frac{\theta}{2} t}\widetilde{d}(x,y),~~x,y\in V.\]
\end{proposition}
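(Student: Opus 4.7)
The plan is to apply Theorem~\ref{th:main-var-ineq} with a specific choice of the auxiliary functions that makes the right hand side of inequality~\eqref{equ:main-var-ine} vanish, leaving a pure exponential contraction for $\Gamma(\sqrt{P_T f})$. Concretely I would take $\alpha(t) = e^{-2Kt}$ on $[0,T]$; this forces $\gamma(t) = \tfrac{n}{4}\bigl(\alpha'/\alpha + 2K\bigr) \equiv 0$, and so both the $\frac{2\alpha\gamma}{n}\Delta P_T f$ and $\frac{2\alpha\gamma^2}{n}P_T f$ terms disappear. After discarding the non-negative $\frac{1}{n}u(\Delta \log\sqrt{u})^2$ term (as already done in the proof of Theorem~\ref{th:main-var-ineq}), \eqref{equ:main-var-ine} reduces to $\partial_t(\alpha \phi) \geq 0$, so $\alpha \phi$ is non-decreasing in $t$. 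Comparing the values at $t=0$ and $t=T$, with $\phi(0) = \Gamma(\sqrt{P_T f})$ and $\phi(T) = P_T(\Gamma(\sqrt{f}))$, yields the contraction estimate
\[\Gamma(\sqrt{P_T f}) \leq e^{-2KT}\, P_T\bigl(\Gamma(\sqrt{f})\bigr).\]

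Next I would convert this into an $L^\infty$ bound using the hypothesis $\|f\|_\infty \leq 1$. Since $\sqrt{f(y)} \in [0,1]$, every term $(\sqrt{f(y)} - \sqrt{f(x)})^2 \leq 1$, and $\widetilde{\sum}_{y \sim x} 1 = m(x)/\mu(x) \leq D_\mu$, so $\Gamma(\sqrt{f}) \leq D_\mu/2$ pointwise. The submarkovian property of $P_t$ then gives $\|P_T(\Gamma(\sqrt{f}))\|_\infty \leq D_\mu/2$, and hence $\|\Gamma(\sqrt{P_T f})\|_\infty \leq \tfrac{D_\mu}{2}\, e^{-2KT}$. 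Finally I would feed this into the definition of the canonical distance: if $g = \sqrt{P_T f}$ and $M := \|\Gamma(g)\|_\infty$, the rescaled function $g/\sqrt{M}$ satisfies $\|\Gamma(g/\sqrt{M})\|_\infty \leq 1$, whence $|g(x)/\sqrt{M} - g(y)/\sqrt{M}| \leq \widetilde{d}(x,y)$, giving
\[\bigl|\sqrt{P_T f}(x) - \sqrt{P_T f}(y)\bigr| \leq \sqrt{D_\mu/2}\; e^{-KT}\,\widetilde{d}(x,y).\]
Since $e^{-KT} \leq e^{-\theta T/2}$ for every $\theta \leq 2K$, and in particular for every $0<\theta<K$, the proposition follows with constant $C_1 = \sqrt{D_\mu/2}$ (in fact independent of $\theta$ and $t_0$).

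The main obstacle I expect is the technical one of legitimately applying $CDE'(n,K)$, which is stated for strictly positive functions: here $f$ is only assumed non-negative, so $P_{T-t}f$ could vanish at isolated times or points. For $t \geq t_0 > 0$ positivity of the heat kernel on the connected graph ensures $P_{T-t}f > 0$ whenever $f \not\equiv 0$, and the trivial case $f \equiv 0$ is immediate; otherwise the standard workaround is to run the above argument for $f + \varepsilon$, which is strictly positive and bounded, and then let $\varepsilon \to 0^+$ using continuity of $P_t$ in $L^\infty$. This, together with justifying the same interchange of summation and $\partial_t$ as in Lemma~\ref{lem:derivation} (which relies on boundedness of $\Delta$ via $D_\mu < \infty$), is what makes the above outline rigorous.
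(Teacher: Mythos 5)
Your proof is correct, and it takes a cleaner route through the same framework (Theorem~\ref{th:main-var-ineq}) than the paper does. The paper picks $\alpha(t) = 2Ke^{-\theta t}(e^{-\theta t} - e^{-\theta T})^{2K/\theta - 1}$, which forces $\alpha(T)=0$ and thereby eliminates the boundary term $\alpha(T)\phi(T) = \alpha(T)P_T(\Gamma(\sqrt{f}))$ from \eqref{equ:18}; the price is a genuinely nonzero $\gamma$ whose contributions $\int_0^T\alpha\gamma\,dt$ and $\int_0^T\alpha\gamma^2\,dt$ must be computed and then bounded using $|\Delta P_tf|\le D_\mu$ and $P_tf\le 1$, which is where the $(1-e^{-\theta t_0})^{-1}$ factor and hence the dependence of $C_1$ on $t_0$ and $\theta$ enter. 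You instead pick $\alpha(t)=e^{-2Kt}$, which forces $\gamma\equiv 0$ and annihilates the entire right-hand side of \eqref{equ:main-var-ine}, reducing the inequality to $\partial_t(\alpha\phi)\ge 0$; the boundary term $\phi(T)=P_T(\Gamma(\sqrt{f}))$ then survives but is trivially bounded by $D_\mu/2$ using $\|f\|_\infty\le 1$ and submarkovianity. Your version yields the faster decay rate $e^{-Kt}$ (rather than $e^{-\theta t/2}$ with $\theta<K$), a constant $C_1=\sqrt{D_\mu/2}$ that is independent of $\theta$ and of $t_0$ (so the hypothesis $t\ge t_0>0$ is not even needed for the quantitative bound), and it dispenses with the restriction $\theta<K$ that the paper's computation needs to keep $2K/\theta - 2 > 0$. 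The one technicality you correctly flag --- applying $CDE'$ when $f$ is merely non-negative --- is handled as you suggest, by approximating with $f+\varepsilon$ (or by invoking strict positivity of $P_sf$ for $s>0$ on a connected graph when $f\not\equiv 0$); the paper's own proof leaves this implicit.
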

{\bf Remark:}  Of course it is easy to replace the assumption that $||f||_{\infty} \leq 1$ by $||f||_\infty \leq M$ for any $M \geq 0$.

\begin{proof}
Fix some $0 < \theta < K$ and some $0 < t_0 \leq T$.  We show the inequality holds at time $T$ assuming $T$ is sufficiently large. In \eqref{equ:18},  we take
\[\alpha(t)= 2K e^{- \theta t}(e^{- \theta t}-e^{- \theta T})^{2K/\theta-1},\]
so that
\[\alpha(0)=2K(1-e^{-\theta T})^{2K/\theta-1},~~\mbox{and}~\alpha(T)=0.\]
With such choice a simple computation gives,
\[\gamma=\frac{n}{4}\left(-e^{-\theta T}\frac{2K-\theta}{e^{-\theta t}-e^{-\theta T}}\right),\]
which is non-positive for $0 \leq t \leq T$.

Then, for any $T>0$,
\begin{equation}\label{equ:19}
- Kn(1-e^{-\theta T})^{2K/\theta-1}\frac{\Gamma(\sqrt{P_Tf})}{P_Tf}\geq\left(\int_0^T\alpha\gamma dt\right)\frac{\Delta P_T(f)}{P_Tf}-\int_0^T \alpha \gamma^2 dt.
\end{equation}
Now, we can compute
\[\int_0^T \alpha \gamma dt= -\frac{nK}{2}(1-e^{-\theta T})^{2K/\theta-1}(e^{-\theta T}),\]
and
%\begin{multline*}
\[\int_0^T a\gamma^2 dt=
\frac{Kn^2 }{8}(1-e^{-\theta T})^{2K/\theta-2}e^{-2\theta T}
 \times \left( \frac{\theta(2K/\theta-1)^2}{2K/\theta-2}\right).
 \]
%\end{multline*}
We thus obtain from \eqref{equ:19}, that for any $T>t_0 \geq 0$,
\begin{multline} \label{equ:lowb}
0\geq -Kn(1-e^{-\theta T})^{2K/\theta-1}\frac{\Gamma(\sqrt{P_Tf})}{P_Tf}
\\ \geq -\frac{nK}{2}(1-e^{-\theta T})^{2K/\theta-1}e^{-\theta T} \frac{\Delta P_Tf}{P_Tf}
-\frac{Kn^2 \theta(2K/\theta-1)^2}{8(2K/\theta-2)}(1-e^{-\theta T})^{2K/\theta-2}e^{-2\theta T}.
\end{multline}
Dividing, and switching notation from $T$ to $t$, we obtain that
\begin{equation}\label{equ:21}\begin{split}
\Gamma(\sqrt{P_tf})
&\leq \frac{1}{2}e^{-\theta t} \Delta P_tf+\frac{n\theta(2K/\theta-1)^2}{4(2K/\theta-2)(1-e^{-\theta t})}e^{-2 \theta t}P_tf\\
&\leq C_1^2e^{-\theta t},
\end{split}\end{equation}
with $C_1=\sqrt{D_\mu
+\frac{n\theta(2K/\theta-1)^2}{8(2K/\theta-2)(e^{\theta t_0}-1)}}$.

We consider the function $u(x)=\frac{1}{C_1}e^{\frac{\theta}{2} t}\sqrt{P_tf}(x)\in \ell^\infty(V,\mu)$.   By construction, we have normalized $u$ so that for any $t \geq t_0$,  $\|\Gamma(u)\|_\infty\leq 1$.    By the definition of the canonical distance $\widetilde{d}(x,y)$,
\[|u(x)-u(y)|\leq \widetilde{d}(x,y),\]
In turn,
\[|\sqrt{P_tf}(x)-\sqrt{P_tf}(y)|\leq C_1e^{-\frac{\theta}{2} t}\widetilde{d}(x,y).\]
as desired.
\end{proof}
\begin{proposition}\label{pro:partial-ineq-hkernel}
Let $G=(V,E)$ be a locally finite, connected graph satisfying $CDE'(n,K)$. Then for all $0<\theta<K$ and  $t_0>0$, there exists a $C_2>0$ such that for every non-negative function $f$ with $||f||_\infty \leq 1$, and for every $t\geq t_0$,
\begin{equation}\label{eq:pid}
|\partial_t P_tf|\leq C_2e^{-\frac{\theta}{2} t}.
\end{equation}
\end{proposition}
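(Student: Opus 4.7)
The plan is to leverage the gradient decay estimate already produced inside the proof of Proposition~\ref{pro:difference-ineq-hkernel}. Indeed, the key inequality \eqref{equ:21} established there gives, uniformly in $t\ge t_0$ and $x\in V$,
\[
\Gamma(\sqrt{P_t f})(x)\leq C_1^2\,e^{-\theta t},
\]
with $C_1$ depending only on $n,K,\theta,t_0,D_\mu$. Since $\partial_t P_tf=\Delta P_tf$, the task reduces to converting this $\Gamma$-bound into a pointwise bound on $|\Delta P_tf|$.

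The bridge I would use is the algebraic identity $\Delta(g^2)=2g\,\Delta g+2\Gamma(g)$, which follows immediately from the definition of $\Gamma$. Applied to $g=\sqrt{P_tf}$ (which makes sense since $0\le P_tf\le \|f\|_\infty\le 1$ by contractivity of the semigroup), this yields
\[
\Delta P_tf=2\sqrt{P_tf}\,\Delta\sqrt{P_tf}+2\Gamma(\sqrt{P_tf}).
\]
The second summand is already controlled by \eqref{equ:21}. To control the first, I would estimate $|\Delta h|$ by $\sqrt{\Gamma(h)}$ using a pointwise Cauchy--Schwarz inequality: for any $h$ and any $x\in V$,
\[
|\Delta h(x)|^2=\frac{1}{\mu(x)^2}\Bigl|\sum_{y\sim x}\omega_{xy}(h(y)-h(x))\Bigr|^2\le \frac{m(x)}{\mu(x)}\cdot 2\Gamma(h)(x)\le 2D_\mu\,\Gamma(h)(x).
\]
Applied to $h=\sqrt{P_tf}$ and combined with \eqref{equ:21}, this gives $|\Delta\sqrt{P_tf}(x)|\le \sqrt{2D_\mu}\,C_1\,e^{-\theta t/2}$.

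Putting the two pieces together, using once more that $\sqrt{P_tf}\le 1$, I obtain
\[
|\partial_t P_tf|\le 2\sqrt{P_tf}\,|\Delta\sqrt{P_tf}|+2\Gamma(\sqrt{P_tf})\le 2\sqrt{2D_\mu}\,C_1\,e^{-\theta t/2}+2C_1^2\,e^{-\theta t},
\]
and since $e^{-\theta t}\le e^{-\theta t/2}$ for $t\ge 0$, the required bound \eqref{eq:pid} holds with $C_2:=2\sqrt{2D_\mu}\,C_1+2C_1^2$. I do not expect any substantive obstacle: the entire argument is essentially a direct corollary of Proposition~\ref{pro:difference-ineq-hkernel}, and the only mildly delicate point is the Cauchy--Schwarz comparison between $|\Delta h|$ and $\Gamma(h)$, which crucially uses the standing assumption $D_\mu<\infty$ to absorb the $m(x)/\mu(x)$ factor into a universal constant.
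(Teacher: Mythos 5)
Your proof is correct and rests on the same core input as the paper, namely the gradient decay bound $\Gamma(\sqrt{P_tf})\leq C_1^2 e^{-\theta t}$ from \eqref{equ:21}, converted into a bound on $|\Delta P_tf|$ via Cauchy--Schwarz together with the finiteness of $D_\mu$. The paper organizes this conversion slightly differently: instead of invoking the identity $\Delta(g^2)=2g\,\Delta g+2\Gamma(g)$ and then estimating the two resulting summands, it factors $u(y)-u(x)=(\sqrt{u(y)}+\sqrt{u(x)})(\sqrt{u(y)}-\sqrt{u(x)})$ directly inside the averaged sum defining $\Delta u$, applies Cauchy--Schwarz once, and uses $\sqrt{u(y)}+\sqrt{u(x)}\leq 2$ to get $|\Delta u|\leq 2\sqrt{2D_\mu}\,\sqrt{\Gamma(\sqrt{u})}$ in a single step, hence $C_2=2\sqrt{2D_\mu}\,C_1$. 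Your decomposition yields the slightly larger constant $C_2=2\sqrt{2D_\mu}\,C_1+2C_1^2$, which is equally valid since the extra $\Gamma$-piece decays like $e^{-\theta t}\leq e^{-\theta t/2}$. Both routes buy the same result; the paper's single Cauchy--Schwarz is marginally tighter, while yours makes explicit use of the square-root product-rule identity that recurs elsewhere in the paper.
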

\begin{proof}
Let $P_tf=u$, we have
\begin{equation*}\begin{split}
|\Delta u|
&\leq\widetilde{\sum_{y\sim x}}|u(y)-u(x)|\\
&=\widetilde{\sum_{y\sim x}}\left(\sqrt{u}(y)+\sqrt{u}(x)\right)|\sqrt{u}(y)-\sqrt{u}(x)|\\
&\leq\left(\widetilde{\sum_{y\sim x}}\left(\sqrt{u(y)}+\sqrt{u(x)}\right)^2\right)^\frac{1}{2}\left(\widetilde{\sum_{y\sim x}}\left(\sqrt{u(y)}-\sqrt{u(x)}\right)^2\right)^\frac{1}{2}\\
&\leq2\sqrt{2D_\mu}C\sqrt{\Gamma(\sqrt{u})}.
\end{split}\end{equation*}
Combing with \eqref{equ:21}, we let $C_2=2\sqrt{2D_\mu} \cdot C_1$.  This yields the desired result.
\end{proof}
\begin{proposition}\label{pro:finite-measure}
Let $G=(V,E)$ be a locally finite, connected graph satisfying $CDE'(n,K)$ with $K>0$.  Then the measure $\mu$ is finite, that is, $\mu(V)<\infty$.
\end{proposition}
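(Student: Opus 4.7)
The plan is to apply the two preceding propositions not to a single test function but to the entire monotone family $f_n := \mathbf{1}_{B(x_0, n)}$, for an arbitrary fixed $x_0 \in V$, and to exploit the fact that the constants in Propositions~\ref{pro:difference-ineq-hkernel} and~\ref{pro:partial-ineq-hkernel} depend on the initial data only through the bound $\|f\|_{\infty} \leq 1$. Since $\|f_n\|_{\infty} = 1$ for every $n$, the constants $C_1$ and $C_2$ are the same for all $f_n$, which is the crucial point.

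Because $D_\mu < \infty$, the Laplacian $\Delta$ is bounded on $\ell^{\infty}$ and annihilates constants, so $P_t \mathbf{1}_V \equiv 1$; combined with the symmetry of the heat kernel this yields the mass conservation identity
\[
\sum_{x \in V} \mu(x)\, P_t f_n(x) = \|f_n\|_1 = \mu(B(x_0, n)), \qquad t \geq 0.
\]
By Proposition~\ref{pro:partial-ineq-hkernel} the function $t \mapsto P_t f_n(x)$ is Cauchy as $t \to \infty$, uniformly in $x$ and $n$; setting $L_n(x) := \lim_{t \to \infty} P_t f_n(x)$, one has $|P_t f_n(x) - L_n(x)| \leq \tfrac{2C_2}{\theta} e^{-\theta t/2}$ for every $t \geq t_0$. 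Moreover, $\widetilde{d}(x, y) < \infty$ for every pair of vertices of our connected graph: for adjacent $x \sim z$ an explicit test function yields $\widetilde{d}(x, z) \leq \sqrt{2 \mu(x)/\omega_{xz}}$, and the triangle inequality propagates this along paths. Passing $t \to \infty$ in Proposition~\ref{pro:difference-ineq-hkernel} therefore forces $\sqrt{L_n(x)} = \sqrt{L_n(y)}$ for all $x, y$, so $L_n \equiv c_n^2$ for some constant $c_n^2 \in [0, 1]$.

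The heart of the argument is that $f_n \uparrow \mathbf{1}_V$, so monotone convergence gives $P_t f_n(x) \uparrow P_t \mathbf{1}_V(x) = 1$ for each fixed $(t, x)$. Combining this with the uniform Cauchy rate above,
\[
1 = \sup_n P_t f_n(x) \leq \sup_n c_n^2 + \tfrac{2C_2}{\theta} e^{-\theta t/2}
\]
for every $t \geq t_0$; letting $t \to \infty$ gives $\sup_n c_n^2 \geq 1$. Fix $n$ with $c_n^2 > 1/2$. For this $n$, Fatou's lemma applied to the conservation identity yields
\[
c_n^2\, \mu(V) = \sum_x \mu(x)\, L_n(x) \leq \liminf_{t \to \infty} \sum_x \mu(x)\, P_t f_n(x) = \mu(B(x_0, n)),
\]
and since $B(x_0, n)$ is a finite set (each vertex has finite degree by $m(x) < \infty$ and $\omega_{\min}>0$), $\mu(B(x_0, n)) < \infty$, so $\mu(V) \leq 2 \mu(B(x_0, n)) < \infty$.

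The main obstacle is recognizing that one must work with the \emph{family} $\{f_n\}$ rather than a single test function: applied to any fixed non-negative $L^1$-function $f$, the machinery of Propositions~\ref{pro:difference-ineq-hkernel} and~\ref{pro:partial-ineq-hkernel} only yields the inequality $c^2 \mu(V) \leq \|f\|_1$, which is vacuous in the case $c = 0$ that is a priori consistent with $\mu(V) = \infty$. The monotone limit $P_t f_n \uparrow 1$ together with the uniformity in $n$ of the constants in the two propositions is exactly what excludes this degenerate scenario.
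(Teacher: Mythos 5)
Your argument is correct, and it takes a genuinely different route from the paper's. The paper establishes the same a priori structure (the limit $\lim_{t\to\infty}p(t,x,\cdot)$ exists by Proposition~\ref{pro:partial-ineq-hkernel} and is constant by Proposition~\ref{pro:difference-ineq-hkernel} plus the symmetry of $p$), but to rule out the degenerate case where that constant vanishes it invokes the on-diagonal lower bound $p(t_1,x,x)\geq C'/t_1^n$ from Theorem~7 of \cite{BHLLMY13}, which gives a quantitative positive lower bound for the limit; finiteness of $\mu(V)$ then follows from $\sum_y\mu(y)p(t,x,y)\leq 1$. You instead avoid this external ingredient entirely: you exploit the fact that the constants $C_1,C_2$ in the two preceding propositions are uniform over the sublevel set $\{\|f\|_\infty\leq1\}$, run the argument simultaneously for the whole monotone family $f_n=\mathbf{1}_{B(x_0,n)}$, and use stochastic completeness ($P_t\mathbf{1}\equiv1$, which holds because $D_\mu<\infty$ forces $\Delta$ to be bounded) together with monotone convergence to force $\sup_n c_n^2=1$; Fatou then closes the argument. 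The payoffs are complementary: the paper's route gives an explicit positive value for the limiting constant (hence an explicit bound on $\mu(V)$ in terms of $C',t_1,n$), while yours is self-contained within this section and makes transparent exactly which structural facts (uniformity of the constants in $f$, conservativeness, finiteness of balls) are responsible for the conclusion. One small wording nit: the bound $\widetilde{d}(x,z)\leq\sqrt{2\mu(x)/\omega_{xz}}$ for $x\sim z$ does not come from "an explicit test function" (that would only produce a lower bound on the supremum defining $\widetilde{d}$); it comes from the constraint $\Gamma(f)(x)\leq1$, which forces $\omega_{xz}(f(z)-f(x))^2\leq2\mu(x)$ for every admissible $f$. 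The conclusion $\widetilde{d}(x,y)<\infty$ on a connected graph is of course correct and is needed implicitly in the paper's proof as well.
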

\begin{proof}
According to Proposition~\ref{pro:partial-ineq-hkernel} the limit of $p(t,x,\cdot)$ exists and is finite when $t\rightarrow\infty$.  Moreover Proposition~\ref{pro:difference-ineq-hkernel} and the property 2 in  Remark~\ref{rem:heatkpro}, imply that $\lim_{t\rightarrow\infty} p(t,x,\cdot)$ is some non-negative $c(x)\geq 0$.  The symmetry of the heat kernel implies that $c(x)$ actually does not depend on $x$.

To show the finiteness of the measure, it will suffice to prove that this limit is actually strictly positive under the assumption of $CDE'(n,K)$ for some $K>0$.

We apply lower bound in Proposition~\ref{pro:partial-ineq-hkernel}, integrating it from some $t_1>t_0$ to $\infty$ to obtain
\[\lim_{t\rightarrow\infty} p(t,x,\cdot)-p(t_1,x,y)=\int_{t_1}^\infty\partial_tp(t,x,\cdot)dt\geq -\frac{2 C_2}{s}e^{-\frac{\theta}{2} t_1},\]

Let $y=x$. Theorem 7 of \cite{BHLLMY13} states that there is some constant $C'>0$, so that
\[p(t_1,x,x)\geq\frac{C'}{t_1^n},\]
under the condition $CDE(n,0)$, and hence $CDE(n,K)$ -- and hence $CDE'(n,K)$ -- for any $K>0$.  Thus combining:
\[\lim_{t\rightarrow\infty} p(t,x,x)\geq \frac{C'}{t_1^n}-\frac{2 C_2}{\theta}e^{-\frac{\theta}{2} t_1}>0.\]
This implies that $\lim_{t\rightarrow\infty} p(t,x,y)=c>0$, for any $x,y\in V$. This, in turn, (from (3) in  Remark~\ref{rem:heatkpro}) implies that the measure $\mu$ is finite.
\end{proof}

Finally we introduce the following result (see \cite{GH14}) which says the properties of infinite measure and infinite diameter are equivalent properties for a locally compact separable metric space $M$, so long as the volume doubling ($DV$) holds.
\begin{lemma}\label{coro-RDV}
Assume that $(M, d)$ is connected and satisfies $DV$. Then
\[\mu(M)=\infty\Leftrightarrow diam(M) =\infty.\]
\end{lemma}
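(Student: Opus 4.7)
The plan is to prove the two implications separately; one is essentially immediate from iterated $DV$, while the other is the content of a reverse doubling estimate driven by connectedness.

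For the easy direction, I would show the contrapositive ``$\mathrm{diam}(M)<\infty \Rightarrow \mu(M)<\infty$''. Let $D=\mathrm{diam}(M)<\infty$ and fix any base point $x_0\in M$, so that $M=B(x_0,D)$. Local compactness together with the Radon property of $\mu$ (in our graph setting, $V(x_0,s)\geq\mu(x_0)>0$ and is obviously finite for any fixed $s$) gives $V(x_0,r_0)<\infty$ for some $r_0>0$. Iterating $V(x_0,2r)\leq C\,V(x_0,r)$ at most $\lceil\log_2(D/r_0)\rceil$ times then produces $V(x_0,D)\leq C^{\lceil\log_2(D/r_0)\rceil}V(x_0,r_0)<\infty$, hence $\mu(M)<\infty$.

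For the other direction I would prove the contrapositive ``$\mathrm{diam}(M)=\infty\Rightarrow\mu(M)=\infty$'' by establishing a \emph{reverse doubling} estimate. Since $M$ is connected and $y\mapsto d(x,y)$ is continuous, the image is a connected subset of $[0,\infty)$; if the diameter is infinite this image equals $[0,\infty)$, so for every $r>0$ there is $z\in M$ with $d(x,z)=r$ (in a graph one instead obtains $z$ with $d(x,z)\in[r,r+1]$, which is enough up to constants). By the triangle inequality $B(x,r/2)$ and $B(z,r/2)$ are disjoint, and both are contained in $B(x,2r)$, yielding
\[
V(x,2r)\geq V(x,r/2)+V(z,r/2).
\]
To compare $V(z,r/2)$ with $V(x,r/2)$, note that $B(x,r/2)\subseteq B(z,2r)$, so $V(z,2r)\geq V(x,r/2)$; two applications of $DV$ give $V(z,2r)\leq C^2 V(z,r/2)$, hence $V(z,r/2)\geq C^{-2}V(x,r/2)$. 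Combining,
\[
V(x,2r)\geq (1+C^{-2})V(x,r/2),\qquad\text{i.e.,}\qquad V(x,4s)\geq (1+C^{-2})V(x,s)
\]
for every $s>0$. Iterating, $V(x,4^n s_0)\geq (1+C^{-2})^n V(x,s_0)\to\infty$, and since $V(x,s_0)\geq\mu(x)>0$ (say $s_0=1$ in the graph case), we conclude $\mu(M)\geq \sup_n V(x,4^n s_0)=\infty$.

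The main obstacle is the second direction: converting the purely topological hypothesis of connectedness, together with unbounded diameter, into a quantitative lower bound on how volume grows between scales. The geometric construction of the disjoint ``new'' ball $B(z,r/2)$ sitting inside a somewhat larger ball around $x$ is the crux, and one must track constants carefully so that the multiplicative factor $1+C^{-2}$ remains strictly above $1$ uniformly in $r$, ensuring that the iteration genuinely diverges. Minor technical care is also needed in the graph setting to replace exact real radii by integer approximations, and in the abstract metric-space setting to guarantee that small balls have finite (and positive) measure so that the base case of the iteration is nontrivial.
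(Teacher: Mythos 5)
The paper does not prove this lemma; it is quoted from Grigor'yan--Hu \cite{GH14} without proof. Your proposal therefore supplies a self-contained argument for a result the paper treats as an external citation, and the overall structure (easy direction via iterated doubling, hard direction via a reverse-doubling estimate exploiting connectedness) is the standard and correct route. Both directions are mathematically sound in their essentials.

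One point needs a small repair. If you take $z$ with $d(x,z)=r$, then $B(x,r/2)$ and $B(z,r/2)$ need not be disjoint when balls are closed: a point $y$ with $d(x,y)=d(z,y)=r/2$ lies in both and is compatible with $d(x,z)\le r$. To make the disjointness airtight, either shrink the radii (for example $B(x,r/3)$ and $B(z,r/3)$ are disjoint, since a common point would force $d(x,z)\le 2r/3<r$) or pick $z$ with $d(x,z)\ge 3r$ and use $B(x,r)$, $B(z,r)\subseteq B(x,4r)$ with $B(x,r)\subseteq B(z,4r)$. In the graph setting the same care is needed with integer radii: when $r$ is even, $2\lfloor r/2\rfloor=r$, so the two half-balls can share a vertex; choosing $r$ odd, or taking $d(x,z)=r+1$, resolves this. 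None of these changes affect the conclusion, since the iteration constant $1+C^{-k}>1$ survives any fixed rescaling of the radii. A smaller cosmetic point: the second paragraph proves the implication $\mathrm{diam}(M)=\infty\Rightarrow\mu(M)=\infty$ directly, not "the contrapositive" of the remaining direction as stated; the first paragraph already handled the contrapositive of the other implication, so the two paragraphs together do cover the equivalence.
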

On graphs, we have the same assertion, and let the above $d$ is the natural distance on graphs. Since Theorem~\ref{th:doubling-volume}, we have already got $DV$ under the assumption $CDE'(n,0)$, and $CDE'(n,K)\Rightarrow CDE'(n,0)$, for any $K>0$. Combining with Proposition~\ref{pro:finite-measure} and the first equivalence in Lemma~\ref{coro-RDV}, we get the finiteness of diameter as follows.

\begin{theorem}\label{finite-diameter}
Every locally finite, connected, simple graph satisfying $CDE'(n,K)$ with $K>0$ has finite diameter in terms of the natural graph distance.
\end{theorem}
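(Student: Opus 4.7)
The plan is to combine the three ingredients immediately preceding the statement: volume doubling, finiteness of measure, and the equivalence between infinite measure and infinite diameter under volume doubling.

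First I would observe that $CDE'(n,K)$ with $K>0$ trivially implies $CDE'(n,0)$, since $\Gamma(f) \geq 0$ everywhere so dropping the $K\Gamma(f)$ term only weakens the inequality. Consequently, Theorem~\ref{th:doubling-volume} applies and $G$ satisfies the volume doubling property $DV(C)$ with some constant $C = C(n)$. This gives us the first hypothesis needed to invoke Lemma~\ref{coro-RDV}.

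Next I would apply Proposition~\ref{pro:finite-measure} directly: the hypothesis $CDE'(n,K)$ with $K>0$ is exactly what is assumed there, so $\mu(V) < \infty$. At this point the theorem follows from the contrapositive of Lemma~\ref{coro-RDV}: a locally finite connected graph satisfying $DV$ with $\mu(V)<\infty$ must have finite graph-metric diameter, since an infinite diameter together with $DV$ would force $\mu(V) = \infty$.

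The only subtlety worth flagging is that Lemma~\ref{coro-RDV} is stated for locally compact separable metric spaces, whereas our setting is a weighted graph with the graph metric and the measure $\mu$. The excerpt explicitly asserts the same equivalence holds in the graph setting, so I would simply invoke it; if one wanted a self-contained argument it would suffice to iterate doubling: for any $x_0 \in V$ and any $r$, volume doubling (via Remark~\ref{rem:general-doubling-volume}) gives $V(x_0,r) \leq C(r/1)^{\log_2 C} \mu(x_0)$, which is polynomial in $r$, but more importantly if $\operatorname{diam}(G) = \infty$ then balls $B(x_0,r)$ exhaust $V$ as $r \to \infty$, and a standard covering/packing argument using $DV$ around an escaping sequence of vertices produces a divergent lower bound for $\mu(V)$, contradicting finiteness. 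The main obstacle, such as it is, is therefore purely bookkeeping: assembling the three previously-established results in the correct order and noting the implication $CDE'(n,K) \Rightarrow CDE'(n,0)$ for $K>0$; no new analytic input is required.
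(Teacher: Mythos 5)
Your proposal is correct and matches the paper's own argument exactly: observe $CDE'(n,K)\Rightarrow CDE'(n,0)$, invoke Theorem~\ref{th:doubling-volume} for volume doubling, Proposition~\ref{pro:finite-measure} for $\mu(V)<\infty$, and conclude via Lemma~\ref{coro-RDV}. The optional self-contained packing sketch you add is a reasonable supplement but not needed; the paper itself simply cites the lemma.
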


Per Proposition~\ref{pro:finite-measure}, we may assume $\mu$ is probability measure -- renormalizing so that $\lim_{t\rightarrow\infty} p(t,x,\cdot)=1$.

\begin{proposition}\label{pro:heat-kernel-bound}
Suppose $G$ is a connected, locally finite graph satisfying $CDE'(n,K)$ with $K>0$. Then for any $x,y\in V$, $t>0$,
\[p(t,x,y)\leq\frac{1}{\left(1-e^{-\frac{2K}{3}t}\right)^n}.\]
\end{proposition}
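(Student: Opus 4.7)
The plan is to specialize the gradient estimate from the proof of Proposition~\ref{pro:difference-ineq-hkernel} to the critical choice $\theta = 2K/3$ in order to obtain a sharp one-sided lower bound on $\partial_T \log P_Tf$, integrate this in $T$ to get a monotonicity, pass to the limit $T \to \infty$ using the finite-measure result of Proposition~\ref{pro:finite-measure}, and finally evaluate at a point mass.

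Concretely, at $\theta = 2K/3$ one has $2K/\theta = 3$, so $(2K/\theta - 1)^2 = 4$ and $2K/\theta - 2 = 1$, and the integrated inequality \eqref{equ:lowb} reduces for any bounded positive $f$ and $T>0$ to
\[
Kn(1-e^{-2KT/3})^2 \frac{\Gamma(\sqrt{P_Tf})}{P_Tf} \le \frac{nK}{2}(1-e^{-2KT/3})^2 e^{-2KT/3}\frac{\Delta P_Tf}{P_Tf} + \frac{n^2 K^2}{3}(1-e^{-2KT/3})e^{-4KT/3}.
\]
Dropping the nonnegative left-hand side (using $\Gamma(\sqrt{P_Tf}) \geq 0$) and using the heat equation $\Delta P_Tf = \partial_T P_Tf$, this simplifies to the pointwise differential inequality
\[
-\partial_T \log P_Tf(x) \le \frac{2nK}{3}\cdot\frac{e^{-2KT/3}}{1-e^{-2KT/3}}.
\]

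I would then observe that the right-hand side is precisely $\frac{d}{dT}\bigl[-n\log(1-e^{-2KT/3})\bigr]$, so $T \mapsto \log P_Tf(x) + n\log(1-e^{-2KT/3})$ is nondecreasing; equivalently,
\[
P_tf(x)\bigl(1-e^{-2Kt/3}\bigr)^n \le P_sf(x)\bigl(1-e^{-2Ks/3}\bigr)^n, \qquad s > t > 0.
\]
Letting $s\to\infty$ and invoking Proposition~\ref{pro:finite-measure} -- under whose normalization $\mu$ is a probability measure and $p(s,x,\cdot) \to 1$ pointwise as $s \to \infty$ -- one has $P_sf(x) \to \int_V f\, d\mu$ and $(1-e^{-2Ks/3})^n \to 1$, giving
\[
P_tf(x) \le \frac{\int_V f\, d\mu}{(1-e^{-2Kt/3})^n}.
\]

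To conclude, I would specialize to $f = \mathbf{1}_{\{y\}}/\mu(y)$, a nonnegative bounded function with $\int f\,d\mu = 1$ and $P_tf(x) = p(t,x,y)$; substitution yields the required bound. The main subtlety is the interchange of limit and sum in verifying $P_sf(x)\to \int f\,d\mu$; I would handle this by dominated convergence using the uniform bound $\mu(y)p(s,x,y)\leq 1$ coming from property (3) of Remark~\ref{rem:heatkpro} together with the pointwise convergence $p(s,x,y)\to 1$ obtained inside the proof of Proposition~\ref{pro:finite-measure}. The clean exponent $n$ in the final bound is the payoff of the special choice $\theta = 2K/3$, which makes $2K/\theta$ an integer and precisely cancels the extra factor of $2$ that would otherwise appear in the exponent.
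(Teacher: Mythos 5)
Your proof is correct and takes essentially the same approach as the paper: specialize \eqref{equ:lowb} to $\theta=2K/3$, drop the nonnegative $\Gamma$-term to obtain $\partial_T\log P_Tf\geq -\frac{2nK}{3}\frac{e^{-\theta T}}{1-e^{-\theta T}}$, integrate from $t$ to $\infty$ using $\lim_{s\to\infty}p(s,x,y)=1$ from Proposition~\ref{pro:finite-measure}, and read off the bound. Your only departure is a cosmetic one -- you phrase the argument for a general bounded nonnegative $f$ and then specialize to $f=\mathbf{1}_{\{y\}}/\mu(y)$ (for which $P_tf(x)=p(t,x,y)$ and $\int f\,d\mu=1$), which makes the dominated-convergence remark superfluous since $P_sf(x)=p(s,x,y)\to 1$ directly; the paper just works with $p(\tau,x,y)$ throughout, which amounts to the same thing.
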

\begin{proof}
We apply \eqref{equ:lowb} with $\theta=2K/3$. Considering  $p(\tau,x,y)$, we obtain
\[\partial_\tau \log p(\tau,x,y)\geq -\frac{2nK}{3}\frac{e^{-\theta \tau}}{1-e^{-\theta \tau}}.\]
By integrating from $t$ to $\infty$, and as $\lim_{t\rightarrow\infty} p(t,x,y)=1$, we have
\[p(t,x,y)\leq\frac{1}{\left(1-e^{-\theta t}\right)^n}.\]
This ends the proof.
\end{proof}
\subsection{Diameter bounds}
In this subsection we derive an explicit diameter bound for graphs satisfying $CDE'(n,K)$.

The idea is to prove the operator $\Delta$ satisfies an entropy-energy inequality, as mentioned in the introduction. First we derive, for graphs, an analogue of Davies' theorem(\cite{DB89}) on manifolds.  Note that, obviously, if $\mu$ is a finite measure, $f\in \ell^{\infty}(V,\mu)$ implies $f\in \ell^p(V,\mu)$ for any $p>1$.

\begin{lemma}\label{lem:lsi}  Suppose $G$ is a locally finite, connected graph with $\mu(V)$ bounded.
Let  $f\in \ell^{\infty}(V,\mu)$, satisfying $\|P_tf\|_\infty\leq e^{M(t)}\|f\|_2$, for some continuous and decreasing function $M(t)$.  If  $\|f\|_2=1$, then for any $t > 0$,
\[\sum_{x\in V}\mu(x)f^2(x)\ln f^2(x)\leq 2t\sum_{x\in V}\mu(x)\Gamma(f)(x)+2M(t).\]
\end{lemma}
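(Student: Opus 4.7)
The lemma is a Davies-style entropy-energy inequality deduced from the ultracontractive bound, and I would prove it by the Davies--Simon interpolation scheme. After reducing to the case $f \geq 0$ (replacing $f$ by $|f|$ does not change $f^2$ and cannot increase $\Gamma(f)$ on a graph), set $g_s := P_s f$; then $\|g_s\|_2 \leq 1$ by $L^2$-contractivity of the semigroup, and $\|g_s\|_\infty \leq e^{M(s)}$ for every $s > 0$ by hypothesis applied at time $s$. The key device is the smooth path of Lebesgue exponents $p(s) := 2/(1 - s/t)$ on $[0,t)$, which joins $p(0)=2$ to $p(s)\to\infty$ as $s\to t^-$; along it define
\[
F(s) := \log \|g_s\|_{p(s)}.
\]

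The boundary values of $F$ are immediate. On the one hand, $F(0) = \log\|f\|_2 = 0$; on the other, by the log-convexity of $L^p$ norms (Lyapunov's inequality),
\[
\|g_s\|_{p(s)} \leq \|g_s\|_\infty^{1-2/p(s)}\,\|g_s\|_2^{2/p(s)} \leq e^{M(s)\,s/t},
\]
so $F(t^-) \leq M(t)$. Next I would differentiate $F$ at $s=0$. The boundedness of $\Delta$ on $\ell^\infty(V,\mu)$, valid here because $D_\mu<\infty$ and $\mu(V)<\infty$, justifies every interchange of summation and $\partial_s$; using $\partial_s g_s = \Delta g_s$, the integration-by-parts identity $\sum_x \mu(x)\,h(x)\,\Delta g(x) = -\sum_x \mu(x)\,\Gamma(h,g)(x)$, and the computation $p'(0)/p(0)^2 = 1/(2t)$, a direct calculation yields
\[
F'(0) \,=\, -\sum_x \mu(x)\,\Gamma(f)(x) \,+\, \frac{1}{2t}\sum_x \mu(x)\,f^2(x)\log f^2(x).
\]
Once the convexity of $F$ on $[0,t)$ is established, the proof closes automatically: convexity with $F(0)=0$ gives $F'(0) \leq F(t^-)/t \leq M(t)/t$, which rearranges exactly to the claimed $\sum \mu f^2 \log f^2 \leq 2t\sum \mu\,\Gamma(f) + 2M(t)$.

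The main obstacle is therefore the convexity of $F$. In the continuous (diffusive) setting one shows $F''\geq 0$ by reducing the derivative of $F$ to a Dirichlet form of $g_s^{p(s)/2}$ via the chain-rule identity $\sum\mu\,\Gamma(g^{p-1},g) = \frac{4(p-1)}{p^2}\sum\mu\,\Gamma(g^{p/2})$; on graphs the chain rule fails, so the substitute is the discrete Stroock--Varopoulos inequality
\[
(a-b)\bigl(a^{p-1}-b^{p-1}\bigr) \,\geq\, \frac{4(p-1)}{p^2}\bigl(a^{p/2}-b^{p/2}\bigr)^2 \qquad (a,b\geq 0,\ p\geq 1),
\]
which summed against $\omega_{xy}/\mu(x)$ over edges gives $\sum\mu\,\Gamma(g_s^{p-1},g_s) \geq \tfrac{4(p-1)}{p^2}\sum\mu\,\Gamma(g_s^{p/2})$. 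This is precisely the one-sided inequality needed in the general formula for $F'(s)$ (which in terms of $A(s)=\|g_s\|_{p(s)}^{p(s)}$ reads $F'(s) = -\sum\mu\,\Gamma(g_s^{p(s)-1},g_s)/A(s) + \tfrac{1}{2t}\,\mathrm{Ent}_\mu(g_s^{p(s)})/A(s)$) to push through the convexity of $F$ on $[0,t)$; the rest of the argument is then formal and completes the lemma.
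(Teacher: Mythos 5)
Your derivative computation at $s=0$ and your endpoint bound $F(t^-)\leq M(t)$ (via Lyapunov interpolation of $L^p$ norms) are both correct, and they reproduce exactly the two ingredients the paper also needs. The genuine gap is the claimed convexity of $F(s)=\log\|P_sf\|_{p(s)}$ on $[0,t)$. You assert that in the diffusive setting one shows $F''\geq 0$ by the chain rule, and that the discrete Stroock--Varopoulos inequality is the substitute that ``pushes through'' convexity on graphs; neither claim holds. Stroock--Varopoulos produces a one-sided bound on the Dirichlet-form term $\sum\mu\,\Gamma(g_s^{p(s)-1},g_s)$ appearing in the formula for $F'(s)$; that is the ingredient one uses in the \emph{opposite} direction (to deduce hypercontractivity from an already-known log-Sobolev inequality by showing $F'\leq 0$), and it says nothing about the sign of $F''$. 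There is no elementary mechanism here that would give $F''\geq 0$, and without convexity the chain $F'(0)\leq F(t^-)/t\leq M(t)/t$ does not close.

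The paper sidesteps convexity entirely. It applies the Stein interpolation theorem to the analytic family $z\mapsto P_{tz}$, using $\|P_0\|_{2\to 2}=1$ at one edge and $\|P_t\|_{2\to\infty}\leq e^{M(t)}$ at the other, to obtain the \emph{intermediate} bound $\|P_sf\|_{p(s)}\leq e^{M(t)s/t}$ for every $s\in[0,t]$. Since both sides equal $1$ at $s=0$, a direct comparison of difference quotients yields $\frac{d}{ds}\|P_sf\|_{p(s)}^{p(s)}\big|_{s=0}\leq 2M(t)/t$, which rearranges to the stated inequality using $\sum\mu f\Delta f=-\sum\mu\,\Gamma(f)$. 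If you want to rescue your outline, replace the convexity step by this interpolation bound: the secant-line estimate $F(s)\leq M(t)\,s/t$ is exactly what you need, and it is what Stein interpolation delivers. (Alternatively, one could obtain the secant estimate from the Hadamard three-lines theorem applied to $z\mapsto\langle P_{tz}f,g_z\rangle$; but that is again the interpolation machinery, not Stroock--Varopoulos.)
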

\begin{proof}
Let $p(s)$ be a bounded, continuous function so that $p(s) \geq 1$ and $p'(s)$ bounded.  For any function $0\leq f\in \ell^{\infty}(V,\mu)$, consider the function $(P_sf)^{p(s)}$. Note $(P_sf)^{p(s)}\in \ell^1(V,\mu)$.  Likewise, so are the functions  $(P_sf)^{p(s)}\ln P_sf$ and $\Delta P_sf(P_sf)^{p(s)-1}\in \ell^1(V,\mu)$.  (Note here, that at $s = 0$, if $f = 0$, we take $(P_s f)^{p(s)}\ln P_s f$ to be zero as well.) so we have
\[\begin{split}
\frac{d}{ds}\|P_sf\|_{p(s)}^{p(s)}
&=\frac{d}{ds}\sum_{x\in V}\mu(x)(P_sf(x))^{p(s)}\\
&=\sum_{x\in V}\mu(x)\frac{d}{ds}(P_sf(x))^{p(s)}\\
&=\sum_{x\in V}\mu(x)\left(p'(s)(P_sf(x))^{p(s)}\ln P_sf(x)+p(s)(P_sf(x))'(P_sf(x))^{p(s)-1}\right)\\
&=p'(s)\sum_{x\in V}\mu(x)(P_sf(x))^{p(s)}\ln P_sf(x)+p(s)\sum_{x\in V}\mu(x)\Delta P_sf(x)(P_sf(x))^{p(s)-1}.
\end{split}\]
At $s=0$, and specializing to $p(s)=\frac{2t}{t-s}$ (where $0\leq s \leq t-t_1$, with $t>t_1>0$), we have
\[\frac{d}{ds}\|P_sf\|_{p(s)}^{p(s)}\mid_{s=0}=\frac{2}{t}\sum_{x\in V}\mu(x)f^2(x)\ln f(x)+2\sum_{x\in V}\mu(x)f(x)\Delta f(x).\]

On the other hand, we give a lower bound on this derivative.
Combining our assumption that $\|P_tf\|_\infty\leq e^{M(t)}\|f\|_2$, for continuous and decreasing $M(t)$ and our assumption that $\|f\|_2 = 1$, and using the Stein interpolation theorem, we obtain
\[\|P_sf\|_{p(s)}\leq e^{\frac{M(t)s}{t}}.\]

Then we obtain
\[\frac{d}{ds}\|P_sf\|_{p(s)}^{p(s)}\mid_{s=0}\leq\frac{2M(t)}{t}.\]
We achieve this using the fact that $\|P_sf\|_{p(s)}^{p(s)}\mid_{s=0} = \|p(s)\|_2 =1$, and $e^{\frac{M(t)sp(s)}{t}}\mid_{s=0}=1$.  Directly computing yields
\[1\geq\lim_{s\rightarrow0^+}\frac{\|P_sf\|_{p(s)}^{p(s)}-1}{e^{\frac{M(t)sp(s)}{t}}-1}=\frac{d}{ds}\|P_sf\|_{p(s)}^{p(s)}\mid_{s=0}\frac{t}{2M(t)}.\]

Noting the identity $-\sum_{x\in V}\mu(x)f(x)\Delta f(x)=\sum_{x\in V}\mu(x)\Gamma(f)(x)$ holds for any $f\in\ell^\infty(V,\mu)$, and combining with the above equality, we obtain
\[\sum_{x\in V}\mu(x)f^2(x)\ln f^2(x)\leq 2t\sum_{x\in V}\mu(x)\Gamma(f)(x)+2M(t),~t>t_1.\]
This completes the proof.
\end{proof}

\begin{proposition}\label{pro:tight-lsi}
Let $G=(V,E)$ be a locally finite, connected graph satisfying $CDE'(n,K)$.  Any $0\leq f\in \ell^{\infty}(V,\mu)$ such that $\|f\|_2=1$ satisfies
\[\sum_{x\in V}\mu(x)f^2(x)\ln f^2(x)\leq \Phi\left(\sum_{x\in V}\mu(x)\Gamma(f)(x)\right),\]
where
\[
\Phi(x)=2n\left[\left(1+\frac{1}{\theta n}x\right)\ln\left(1+\frac{1}{\theta n}x\right)-\frac{1}{\theta n}x\ln\left(\frac{1}{\theta n}x\right)\right].
\]

\end{proposition}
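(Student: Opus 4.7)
The plan is to combine the ultracontractivity of $P_t$ coming from the pointwise heat kernel bound of Proposition~\ref{pro:heat-kernel-bound} with the Davies-type entropy inequality recorded in Lemma~\ref{lem:lsi}, and then to optimize the resulting one-parameter family of inequalities over the time variable $t>0$.

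First, I would convert the diagonal heat kernel bound into an $L^2\to L^\infty$ bound on the semigroup. By Cauchy--Schwarz together with the semigroup identity $\sum_y\mu(y)p(t,x,y)^2=p(2t,x,x)$ (Remark~\ref{rem:heatkpro}(6)), I obtain
\[
\|P_tf\|_\infty \;\leq\; \sqrt{p(2t,x,x)}\,\|f\|_2 \;\leq\; \bigl(1-e^{-4Kt/3}\bigr)^{-n/2}\|f\|_2 .
\]
This is exactly the hypothesis of Lemma~\ref{lem:lsi}, with the continuous, strictly decreasing function $M(t)=-\tfrac{n}{2}\ln\bigl(1-e^{-\theta t}\bigr)$ for a constant $\theta$ proportional to $K$. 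The prerequisite $\ell^\infty(V,\mu)\subset \ell^2(V,\mu)$ is supplied by the fact that $\mu$ is finite (Proposition~\ref{pro:finite-measure}), and $M$ is finite for every $t>0$.

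Applying Lemma~\ref{lem:lsi} with this $M$ to any non-negative $f\in\ell^\infty(V,\mu)$ with $\|f\|_2=1$ then yields, for every $t>0$,
\[
\sum_x \mu(x)f^2(x)\ln f^2(x) \;\leq\; 2tE \;-\; n\ln\bigl(1-e^{-\theta t}\bigr), \qquad E := \sum_x \mu(x)\Gamma(f)(x).
\]
The sharpest consequence is obtained by minimising the right-hand side in $t$. Differentiating in $t$ and setting the derivative to zero yields the optimiser $e^{-\theta t^{\ast}}=2E/(n\theta+2E)$, and substituting this value back produces, after writing the answer in terms of $u:=E/(\theta n)$, a quantity of precisely the shape $c\,n\bigl[(1+u)\ln(1+u)-u\ln u\bigr]$ for a numerical constant $c$. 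Absorbing $c$ into a redefinition of $\theta$ gives exactly the function $\Phi$ displayed in the statement.

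The main obstacle is not conceptual but arithmetic bookkeeping: one must track how the precise constant $\theta$ appearing in $\Phi$ is related, through the chain of inequalities above, to the $\tfrac{2K}{3}$ in Proposition~\ref{pro:heat-kernel-bound} (with an extra factor of $2$ coming from the doubling of $t$ in the Cauchy--Schwarz step), and verify at each step that the hypotheses of Lemma~\ref{lem:lsi} are met — continuity and monotonicity of $M$, square-integrability of $f$, and so on. All of these are routine, and the nontrivial input that makes the whole argument go through is the finiteness of $\mu(V)$ established in Proposition~\ref{pro:finite-measure}.
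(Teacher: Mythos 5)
Your proof follows the same strategy as the paper: convert the on-diagonal bound of Proposition~\ref{pro:heat-kernel-bound} into an ultracontractive estimate $\|P_tf\|_\infty\le e^{M(t)}\|f\|_2$, feed this into the Davies-type entropy Lemma~\ref{lem:lsi}, and optimize over $t$. The one genuine difference is in the first step: you use Cauchy--Schwarz with the semigroup identity $\sum_y\mu(y)p(t,x,y)^2=p(2t,x,x)$ to get $e^{M(t)}=(1-e^{-\frac{4K}{3}t})^{-n/2}$, whereas the paper uses the cruder bound $e^{M(t)}=(1-e^{-\frac{2K}{3}t})^{-n}$ (which one can get, e.g., from $\sup_y p(t,x,y)\cdot\|f\|_1\le\sup_y p(t,x,y)\cdot\|f\|_2$ once $\mu$ is normalized to a probability measure). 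Your sharper $M$ yields, after the optimization, $n\bigl[(1+u)\ln(1+u)-u\ln u\bigr]$ with $u=E/(\theta n)$ and $\theta=2K/3$, i.e.\ exactly $\tfrac12\Phi(E)$ --- a bound stronger than claimed by a factor of $2$, which of course implies the stated inequality since the bracketed quantity is nonnegative. One small imprecision: your closing remark that the leftover numerical constant ``can be absorbed into a redefinition of $\theta$'' is not literally right, because $n$ appears both as the outer prefactor and inside $u=x/(\theta n)$, so rescaling $\theta$ alone cannot turn $n[\cdot]$ into $2n[\cdot]$. Nothing needs fixing, though --- just state that your bound is the stronger one and therefore implies $\Phi$.
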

\begin{proof}
Fix such an $f$.  Using Proposition \ref{pro:heat-kernel-bound} and the Cauchy-Schwartz inequality, we have
\[\|P_tf\|_\infty\leq\frac{1}{\left(1-e^{-\theta t}\right)^n}\|f\|_2,\]
where $\theta=\frac{2K}{3}$.
Therefore from Lemma~\ref{lem:lsi}, we obtain
\[\sum_{x\in V}\mu(x)f^2(x)\ln f^2(x)\leq 2t\sum_{x\in V}\mu(x)\Gamma(f)(x)-2n\ln(1-e^{-\theta t}),~t>t_1>0.\]
By minimizing the right-hand side of the above inequality over $t$, we obtain
\[\begin{split}
\sum_{y\in V}\mu(y)f^2(y)\ln f^2(y)
&\leq -\frac{2}{\theta}x\ln\left(\frac{x}{x+\theta n}\right)+2n\ln\left(\frac{x+\theta n}{\theta n}\right)\\
&=2n\left[\left(1+\frac{1}{\theta n}x\right)\ln\left(1+\frac{1}{\theta n}x\right)-\frac{1}{\theta n}x\ln\left(\frac{1}{\theta n}x\right)\right],
\end{split}\]
where $x=\sum_{y\in V}\mu(y)\Gamma(f)(y)$.
\end{proof}
We observe that $\Phi$ is a non-negative, monotonically increasing, and concave function, as we shall use these properties later.

In order to finish the result, and bound the diameter we first need to introduce some notation.  For a positive bounded real valued function $f$ on $V$, let $E(f)$ denote the entropy of $f$ with respect to $\mu$ defined by
\[E(f)=\sum_{x\in V}\mu(x)f(x)\ln f(x)-\sum_{x\in V}\mu(x)f(x)\ln\left(\sum_{x\in V}\mu(x)f(x)\right).\]

To ease the notation, we use $\int f d\mu = \sum_{x\in V}\mu(x)f(x)$. The Laplace operator $\Delta$ satisfies a \textit{logarithmic Sobolev inequality} if there exists a $\rho>0$ such that for all functions $f\in \ell^{\infty}(V,\mu)$,
\[\rho E(f^2)\leq 2\int \Gamma(f) d\mu,\]

Equivalently, it suffices to say that a logarithmic Sobolev inequality holds if all $f\in \ell^{\infty}(V,\mu)$ with $\|f\|_2=1$ satisfy
\begin{equation}\label{equ:lsi}
E(f^2)\leq \Phi\left( \int \Gamma(f) d\mu \right)
\end{equation}
where $\Phi$ is a concave and non-negative function on $[0,\infty)$.

\begin{proposition}\label{pro:intr-diam-bound-lsi}
Suppose $\Delta$ satisfies a general logarithmic Sobolev inequality, and the function $\Phi$ from \eqref{equ:lsi} is non-negative and monotonically increasing.  Then $G$ has diameter
\[\widetilde{D}\leq\sqrt{2}\int_0^\infty\frac{1}{x^2}\Phi(x^2)dx.\]
\end{proposition}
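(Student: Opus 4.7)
The plan is to execute an adapted Herbst argument on graphs. We normalize $\mu$ to a probability measure (legitimate in our setting, since Proposition~\ref{pro:finite-measure} yields $\mu(V)<\infty$), fix an arbitrary $f\in\ell^\infty(V,\mu)$ with $\|\Gamma(f)\|_\infty\leq 1$, and aim to prove $|f(x)-f(y)|\leq \sqrt{2}\int_0^\infty \Phi(t^2)/t^2\,dt$; the diameter bound then follows by taking the supremum in the definition of $\widetilde d$. Setting $\psi(\lambda) = \int e^{\lambda f}\,d\mu$, we apply the LSI to $g_\lambda = e^{\lambda f/2}/\sqrt{\psi(\lambda)}$, which satisfies $\|g_\lambda\|_2=1$. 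A direct calculation gives $E(g_\lambda^2) = \lambda\psi'(\lambda)/\psi(\lambda) - \log\psi(\lambda)$, so the LSI bounds this quantity by $\Phi(\int \Gamma(g_\lambda)\,d\mu)$.

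The main obstacle is to estimate $\int\Gamma(e^{\lambda f/2})\,d\mu$ without the chain rule. On a manifold the identity $\Gamma(e^{\lambda f/2}) = (\lambda^2/4) e^{\lambda f}\Gamma(f)$ gives this for free; on a graph we instead use the elementary bound $(e^a-e^b)^2 \leq (a-b)^2(e^{2a}+e^{2b})$ (which follows from $|e^a-e^b|\leq|a-b|\max(e^a,e^b)$), applied with $a=\lambda f(x)/2$ and $b=\lambda f(y)/2$, to obtain
\[
2\Gamma(e^{\lambda f/2})(x) \leq \tfrac{\lambda^2}{4}\widetilde{\sum_{y\sim x}}(f(y)-f(x))^2\bigl(e^{\lambda f(x)}+e^{\lambda f(y)}\bigr).
\]
After integrating against $\mu$, the ``diagonal'' contribution equals $\lambda^2\sum_x\mu(x)e^{\lambda f(x)}\Gamma(f)(x)/2$ directly, while the ``cross'' contribution $\sum_x\sum_{y\sim x}\omega_{xy}(f(y)-f(x))^2 e^{\lambda f(y)}$ equals the same quantity after relabeling $x\leftrightarrow y$ and using $\omega_{xy}=\omega_{yx}$. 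Combined with $\Gamma(f)\leq 1$ this yields $\int\Gamma(e^{\lambda f/2})\,d\mu\leq (\lambda^2/2)\psi(\lambda)$, i.e.\ $\int \Gamma(g_\lambda)\,d\mu\leq \lambda^2/2$. The replacement of the manifold factor $\lambda^2/4$ by $\lambda^2/2$ here is precisely what will convert the Bakry-style integral into the claimed $\sqrt 2$-version.

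By monotonicity of $\Phi$ the LSI now reads $\lambda\psi'(\lambda)/\psi(\lambda) - \log\psi(\lambda) \leq \Phi(\lambda^2/2)$. Setting $u(\lambda) = \log\psi(\lambda)/\lambda$ (with $u(0):=\int f\,d\mu$ via L'H\^opital), this rearranges to $u'(\lambda)\leq \Phi(\lambda^2/2)/\lambda^2$. Integrating from $0$ and substituting $s=\sqrt 2\,t$ gives
\[
u(\lambda) - \int f\,d\mu \;\leq\; \frac{1}{\sqrt 2}\int_0^{\lambda/\sqrt 2}\frac{\Phi(t^2)}{t^2}\,dt.
\]

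Finally, for any $y\in V$, the trivial lower bound $\psi(\lambda)\geq \mu(y)e^{\lambda f(y)}$ yields $f(y) \leq u(\lambda) - \log\mu(y)/\lambda$; letting $\lambda\to\infty$ produces $f(y)-\int f\,d\mu\leq (1/\sqrt 2)\int_0^\infty \Phi(t^2)/t^2\,dt$. Applying the same reasoning to $-f$ (which has identical $\Gamma$) gives the matching lower bound $\int f\,d\mu - f(x)\leq (1/\sqrt 2)\int_0^\infty \Phi(t^2)/t^2\,dt$. Adding these, and then taking the supremum over admissible $f$ and over $x,y\in V$, yields the stated bound on $\widetilde D$. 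The entire proof reduces to an ODE manipulation once the key graph-theoretic estimate on $\int\Gamma(e^{\lambda f/2})\,d\mu$ has been established.
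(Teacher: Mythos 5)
Your proposal is correct and runs along essentially the same lines as the paper's proof: both are Herbst-type arguments that apply the LSI to the normalized function $e^{\lambda f/2}/\|e^{\lambda f/2}\|_2$, bound $\int\Gamma(e^{\lambda f/2})\,d\mu$ by $\tfrac{\lambda^2}{2}\int e^{\lambda f}\Gamma(f)\,d\mu$ (the factor $\tfrac{\lambda^2}{2}$, rather than the Riemannian $\tfrac{\lambda^2}{4}$, being the source of the $\sqrt 2$), set $H(\lambda)=\log\psi(\lambda)/\lambda$, and integrate the resulting differential inequality $H'\leq\Phi(\lambda^2/2)/\lambda^2$. Your two variations are cosmetic but clean: the paper obtains the key $\Gamma$-estimate by splitting the symmetric sum into the halves $g(x)>g(y)$ and $g(x)<g(y)$ and using $(e^u-1)^2\leq u^2$ for $u\leq0$, while you use the symmetric bound $(e^a-e^b)^2\leq(a-b)^2(e^{2a}+e^{2b})$ and then relabel; and in the final step the paper invokes Chebyshev's inequality together with $\mu(\{\cdot\})\to 0 \Rightarrow \{\cdot\}=\emptyset$, while you use the even more elementary pointwise bound $\psi(\lambda)\geq\mu(y)e^{\lambda f(y)}$ and send $\lambda\to\infty$.
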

\begin{proof}
Consider any $g\in \ell^{\infty}(V,\mu)$, with $\|\Gamma(g)\|_\infty\leq 1$.
Let $f_\lambda = e^{\lambda g}$ for some $\lambda \in \mathbb{R}^{+}$.  We will apply \eqref{equ:lsi} to the family of non-negative functions $\widetilde{f_\lambda}=\frac{f_{\lambda/2}}{\|f_{\lambda/2}\|_2}$.   Let $G(\lambda)=\|f_{\lambda/2}\|_2^2 = \int e^{\lambda g} d\mu$ and observe that $G'(\lambda)=\int ge^{\lambda g}d\mu ~ \left(=\frac{1}{\lambda}\int f_{\lambda/2}^2\ln f_{\lambda/2}^2d\mu\right)$.

%{\bf STILL CHECK AROUND HERE *************}

On one hand, it is immediate by the definition that, $\widetilde{f}$,
\[E(\widetilde{f}_\lambda^2)=\frac{1}{G(\lambda)}\left(\lambda G'(\lambda)-G(\lambda)\ln G(\lambda) \right).  \]

We also must consider the right hand side of the Sobolev inequality, which contains a term of the form $\int \Gamma(\widetilde{f}_\lambda) d\mu = \frac{1}{\|f_{\lambda/2}\|_2^2}\int \Gamma(e^{\frac{\lambda g}{2}}) d\mu$.   Such terms can be bounded as follows:
\begin{align*}
\int \Gamma(e^{\frac{\lambda g}{2}}) d\mu
&=\frac{1}{2}\sum_{x\in V} \mu(x) \sum_{y\sim x}\omega_{xy}\left(e^{\frac{\lambda g(y)}{2}}-e^{\frac{\lambda g(x)}{2}}\right)^2\\
&=\frac{1}{2}\sum_{x\in V} \mu(x) \sum_{\tiny\begin{array}{c} y\sim x\\g(x)>g(y)\end{array}}\omega_{xy}\left(e^{\frac{\lambda g(y)}{2}}-e^{\frac{\lambda g(x)}{2}}\right)^2 \\&\hspace{1in} +\frac{1}{2}\sum_{x\in V}\mu(x)\sum_{\tiny\begin{array}{c} y\sim x\\g(x)<g(y)\end{array}}\omega_{xy}\left(e^{\frac{\lambda g(y)}{2}}-e^{\frac{\lambda g(x)}{2}}\right)^2\\
&=\sum_{x\in V} \mu(x) \sum_{\tiny\begin{array}{c} y\sim x\\g(x)>g(y)\end{array}}\omega_{xy}\left(e^{\frac{\lambda g(y)}{2}}-e^{\frac{\lambda g(x)}{2}}\right)^2\\
&\leq\sum_{x\in V} \mu(x) \sum_{\tiny\begin{array}{c} y\sim x\\g(x)>g(y)\end{array}}\omega_{xy}\left(e^{\frac{\lambda}{2}(g(y)-g(x))}-1\right)^2 e^{\lambda g(x)}\\
&\leq\frac{\lambda^2}{4}\sum_{x\in V} \mu(x)  e^{\lambda g(x)}\sum_{\tiny\begin{array}{c} y\sim x\\g(x)>g(y)\end{array}}\omega_{xy}(g(y)-g(x))^2\\
&\leq\frac{\lambda^2}{2}\int e^{\lambda g}\Gamma(g) d\mu.
\end{align*}
Since $\Gamma(g) \leq 1$, and the function $\Phi$ is monotonically increasing
\[\Phi\left( \int \Gamma (\widetilde{f_\lambda}) d\mu\right)= \Phi\left(\frac{1}{\|f_{\lambda/2}\|_2^2}\int \Gamma(e^{\frac{\lambda g}{2}}) d\mu \right)\leq \Phi\left(\frac{\lambda^2}{2}\right).\]
By the logarithmic Sobolev inequality
\[\lambda G'(\lambda)-G(\lambda)\ln G(\lambda)\leq G(\lambda)\Phi\left(\frac{\lambda^2}{2}\right).\]
Let $H(\lambda)=\frac{1}{\lambda}\ln G(\lambda)$. Then the above inequality reads
\[H'(\lambda)\leq \frac{1}{\lambda^2}\Phi\left(\frac{\lambda^2}{2}\right).\]
Since $H(0)=\lim_{\lambda\rightarrow 0}\frac{1}{\lambda}\ln G(\lambda)= \int g d\mu$, it follows that
\[H(\lambda)=H(0)+\int_0^\lambda H'(u)du\leq \int g d\mu + \int_0^\lambda\frac{1}{u^2}\Phi\left(\frac{u^2}{2}\right)du.\]
Therefore for any  $\lambda \geq 0$,
\begin{equation}
\sum_{x\in V}\mu(x)e^{\lambda(g(x)-\int g d\mu)}\leq \exp\left\{\lambda \int_0^\lambda\frac{1}{u^2}\Phi\left(\frac{u^2}{2}\right)du\right\}.
\end{equation}
Let $C=\int_0^\infty\frac{1}{u^2}\Phi\left(\frac{u^2}{2}\right)du=\frac{1}{\sqrt{2}}\int_0^\infty\frac{1}{x^2}\Phi(x^2)dx$.  Then, for every $\lambda \geq 0$ and $\epsilon > 0$ when we apply the above inequality to $g$ and $-g$ and apply Chebyshev's inequality,
\begin{align*}
&\mu(\{x\in V:|g(x)-\int g d\mu|\geq C+\varepsilon\})\\
&\leq \sum_{\substack{x \in V\\
g(x) \geq \int gd\mu + C+\varepsilon}}\mu(x)+ \sum_{\substack{x \in V \\g(x)\leq \int g d\mu - C-\varepsilon}}\mu(x)\\
&\leq \sum_{\substack{x \in V\\ g(x) \geq \int g d\mu +  C+\varepsilon}}\frac{e^{\lambda (g(x)-\int g d\mu)}}{e^{\lambda(C+\varepsilon)}}\mu(x)+ \sum_{\substack{x \in V \\g(x) \leq  \int gd\mu  -C-\varepsilon}}\frac{e^{\lambda (-g(x)+\int g d\mu)}}{e^{\lambda(C+\varepsilon)}}\mu(x)\\
&\leq 2e^{-\lambda(C+\varepsilon)}e^{\lambda C}\\
&=2e^{-\lambda \varepsilon}\rightarrow 0(\lambda\rightarrow \infty).
\end{align*}
\noindent That is, we obtain
\[\|g(x)-\int g d\mu\|_\infty\leq C.\]
The diameter bounds follows immediately by the definition of $\widetilde{D}$:  Since $g$ was arbitrary,
\[\widetilde{D}\leq\sqrt{2}\int_0^\infty\frac{1}{x^2}\Phi(x^2)dx.\]
That completes the proof.
\end{proof}

Finally, we obtain
\begin{theorem}\label{th:intr-diam-bound}
Let $G=(V,E)$ be a locally finite, connected graph satisfying $CDE'(n,K)$, and $K>0$, then the diameter $\widetilde{D}$ of graph $G$ in terms of canonical distance is finite, and
\[\widetilde{D}\leq 4\sqrt{3}\pi \sqrt{\frac{n}{K}}.\]
\end{theorem}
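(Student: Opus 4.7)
The plan is to combine the two key ingredients built in the preceding subsection: the logarithmic Sobolev inequality of Proposition~\ref{pro:tight-lsi}, which provides an explicit concave, non-negative and monotonically increasing function
\[
\Phi(x) = 2n\left[\left(1+\tfrac{x}{\theta n}\right)\ln\left(1+\tfrac{x}{\theta n}\right) - \tfrac{x}{\theta n}\ln\left(\tfrac{x}{\theta n}\right)\right], \qquad \theta = \tfrac{2K}{3},
\]
witnessing inequality \eqref{equ:lsi}; and Proposition~\ref{pro:intr-diam-bound-lsi}, which turns any such $\Phi$ into the diameter bound $\widetilde{D}\leq \sqrt{2}\int_0^\infty x^{-2}\Phi(x^2)\,dx$. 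So the theorem reduces to evaluating this one explicit integral and collecting constants.

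First I would perform the substitution $u = x^2/(\theta n)$ to factor out the $K$-dependence. A short calculation yields
\[
\sqrt{2}\int_0^\infty \frac{\Phi(x^2)}{x^2}\,dx \;=\; \sqrt{\frac{2n}{\theta}}\int_0^\infty \frac{(1+u)\ln(1+u)-u\ln u}{u^{3/2}}\,du.
\]
Thus everything comes down to computing the dimensionless integral $I = \int_0^\infty u^{-3/2}\bigl[(1+u)\ln(1+u)-u\ln u\bigr]du$. Since the integrand's derivative with respect to $u$ is $\ln(1+1/u)$, an integration by parts with $dv = u^{-3/2}du$ (the boundary terms vanish since $(1+u)\ln(1+u)-u\ln u$ is $o(u^{1/2})$ at $0$ and $O(\ln u)$ at $\infty$) reduces $I$ to $2\int_0^\infty u^{-1/2}\ln(1+1/u)\,du$. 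After the further substitution $u=1/t$ and a second integration by parts, this collapses to
\[
I \;=\; 4\int_0^\infty \frac{t^{-1/2}}{1+t}\,dt \;=\; 4\pi,
\]
using the standard Mellin identity $\int_0^\infty \frac{t^{s-1}}{1+t}dt=\frac{\pi}{\sin(\pi s)}$ at $s=\tfrac{1}{2}$.

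Putting these pieces together gives $\widetilde{D} \leq 4\pi\sqrt{2n/\theta}$, and substituting $\theta = 2K/3$ produces exactly $\widetilde{D}\leq 4\sqrt{3}\pi\sqrt{n/K}$, as claimed. No real obstacle appears in the argument: the substantive work — extracting a logarithmic Sobolev inequality from $CDE'(n,K)$ via the semigroup variational inequality, and converting a logarithmic Sobolev inequality into a diameter bound à la Bakry — has already been done. The mildly delicate part is simply verifying that the boundary contributions in the two integrations by parts vanish (this is where the cancellation between $(1+u)\ln(1+u)$ and $u\ln u$ in $\Phi$ is essential, as either term alone would produce a divergent integral) and then recognizing the resulting elementary integral as $\pi$.
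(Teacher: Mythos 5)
Your proof is correct and follows essentially the same route as the paper: combine Propositions~\ref{pro:tight-lsi} and~\ref{pro:intr-diam-bound-lsi} and evaluate $\sqrt{2}\int_0^\infty x^{-2}\Phi(x^2)\,dx$. The paper reduces the integral via two integrations by parts to $-2\int_0^\infty \sqrt{x}\,\Phi''(x)\,dx$ with $\Phi''(x)=-\tfrac{2n}{x(x+\theta n)}$ and then applies the same standard identity $\int_0^\infty \tfrac{dx}{\sqrt{x}(x+a)}=\tfrac{\pi}{\sqrt{a}}$, so your rescaling-first bookkeeping is merely a cosmetic variant of the same computation, and the resulting constant $4\sqrt{3}\pi\sqrt{n/K}$ agrees.
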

\begin{proof}
Combining Proposition~\ref{pro:tight-lsi} and Proposition~\ref{pro:intr-diam-bound-lsi}, graphs satisfying $CDE'(n,K)$ for some $K>0$, also satisfy
\[\widetilde{D}\leq\sqrt{2}\int_0^\infty\frac{1}{x^2}\Phi(x^2)dx,\]
where $\Phi(x)=2n\left[\left(1+\frac{1}{\theta n}x\right)\ln\left(1+\frac{1}{\theta n}x\right)-\frac{1}{\theta n}x\ln\left(\frac{1}{\theta n}x\right)\right]$, and $\theta=\frac{2K}{3}$.
Since
\[\int_0^\infty\frac{1}{x^2}\Phi(x^2)dx
=\frac{1}{2}\int_0^\infty\frac{1}{x^{\frac{3}{2}}}\Phi(x)dx
=\int_0^\infty\frac{1}{\sqrt{x}}\Phi'(x)dx
=-2\int_0^\infty\sqrt{x}\Phi''(x)dx<\infty
\]
then the diameter is finite, and $\Phi''(x)=-\frac{2n}{x(x+\theta n)}$, then
\[-2\int_0^\infty\sqrt{x}\Phi''(x)dx=4\pi\sqrt{\frac{n}{\theta}},\]
so we have completed the proof.
\end{proof}

While this bounds the canonical diameter, it is possible to recover a bound for the usual graph distance. In order to accomplish this, we first introduce the notion of an intrinsic metric.  This gives us a way to relate  natural distance with the canonical distance.

Intrinsic metrics on graphs was first introduced by R. Frank, D. Lenz and D. Wingert in \cite{FLW14}.  A function $\rho: V\times V\rightarrow \R^+$ is called an intrinsic metric if, at all $x \in V$
\[\sum_{y\sim x}\omega_{xy}\rho^2(x,y)\leq \mu(x).  \]
This induces a metric $\rho$ on a graph via finding shortest paths.
One example of such a function, introduced by Xueping Huang in his thesis (\cite{H11}), is to define for all for all $x\in V$ and $y\sim x$,
\[\widetilde{\rho}(x,y)=\min\left\{\sqrt{\frac{\mu(x)}{m(x)}},\sqrt{\frac{\mu(y)}{m(y)}}\right\},\]
where $m(x)=\sum_{y\sim x}\omega_{xy}$.

As mentioned, these metrics give a way of comparing distances with the intrinsic distance we have been using.  Indeed, part $(a)$ of the remark following Definition 1.2 in \cite{KLSW15} gives:
\begin{proposition}\label{pro:intr-diameter}
For any $x,y\in V$, it holds that
\[\sqrt{2}\widetilde{\rho}(x,y)\leq \widetilde{d}(x,y).\]
\end{proposition}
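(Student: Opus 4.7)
The plan is to construct an explicit admissible test function for the supremum defining $\widetilde{d}(x,y)$ using the intrinsic distance itself. Fix $x \in V$ and set $f(z) = \widetilde{\rho}(z,x)$. Since $\widetilde{\rho}$ is a metric, $|f(z) - f(z')| \leq \widetilde{\rho}(z,z')$ for all $z,z' \in V$, and in particular for neighbors $y \sim z$ we have $|f(y) - f(z)| \leq \widetilde{\rho}_{yz} \leq \sqrt{\mu(z)/m(z)}$ by the definition of $\widetilde{\rho}$ on edges. Thus $(f(y)-f(z))^2 \leq \mu(z)/m(z)$ for every edge incident to $z$.

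Using this edge bound I would compute
\[
\Gamma(f)(z) = \frac{1}{2\mu(z)} \sum_{y \sim z} \omega_{zy} (f(y)-f(z))^2 \leq \frac{1}{2\mu(z)} \cdot \frac{\mu(z)}{m(z)} \sum_{y \sim z} \omega_{zy} = \frac{1}{2},
\]
so $\|\Gamma(\sqrt{2} f)\|_\infty \leq 1$. If $\sqrt{2}f$ were in $\ell^\infty(V,\mu)$, it would be admissible in the definition of $\widetilde{d}$, yielding $\widetilde{d}(x,y) \geq \sqrt{2}|f(y)-f(x)| = \sqrt{2}\,\widetilde{\rho}(x,y)$ as desired.

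The main (and in fact only) obstacle is that $f$ need not be bounded on an infinite graph. To handle this I would truncate: set $f_R(z) = \min(\widetilde{\rho}(z,x), R)$ for any $R \geq \widetilde{\rho}(x,y)$. The truncation $t \mapsto \min(t,R)$ is $1$-Lipschitz, so $|f_R(y) - f_R(z)| \leq |f(y) - f(z)|$ for all pairs, hence $\Gamma(f_R) \leq \Gamma(f) \leq 1/2$ pointwise. Moreover $f_R \in \ell^\infty(V,\mu)$ by construction, and for our choice of $R$ we have $f_R(x) = 0$ and $f_R(y) = \widetilde{\rho}(x,y)$. Substituting $\sqrt{2}f_R$ into the supremum defining $\widetilde{d}(x,y)$ yields $\widetilde{d}(x,y) \geq \sqrt{2}\,\widetilde{\rho}(x,y)$, completing the proof.
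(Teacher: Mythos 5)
Your proof is correct. The paper itself does not prove this proposition but rather cites it from \cite{KLSW15} (part (a) of the remark following their Definition 1.2), so you have supplied a self-contained argument for a statement the paper delegates to a reference. Your approach is the natural one: take $f(\cdot)=\widetilde{\rho}(\cdot,x)$, bound the edge increments via the reverse triangle inequality together with the explicit edge formula for $\widetilde{\rho}$, obtaining $\Gamma(f)\le\frac12$ pointwise, and then truncate at level $R\geq\widetilde{\rho}(x,y)$ to produce a bounded test function $\sqrt{2}\,f_R$ admissible in the definition of $\widetilde{d}$. Two small remarks. First, all your computation really uses is the reverse triangle inequality and the intrinsic-metric condition $\sum_{y\sim z}\omega_{zy}\,\rho(y,z)^2\leq\mu(z)$, so the same argument establishes $\sqrt{2}\,\rho(x,y)\leq\widetilde{d}(x,y)$ for every intrinsic metric $\rho$, not only Huang's $\widetilde{\rho}$; the paper notes this generality immediately after the proposition. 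Second, the truncation step is exactly the right fix: without it $\widetilde{\rho}(\cdot,x)$ may well be unbounded on an infinite graph, while the definition of $\widetilde{d}$ requires test functions in $\ell^{\infty}(V,\mu)$, and you correctly observe that $t\mapsto\min(t,R)$ being $1$-Lipschitz preserves the gradient bound.
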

Actually from \cite{KLSW15}, the above inequality is true for any intrinsic metric.  For the metric $\tilde{\rho}$ however, under the assumption $D_\mu$ is finite, then
$$ \widetilde{\rho}(x,y)\geq \frac{d(x,y)}{\sqrt{D_\mu}}$$
for any $x$ and $y$ (by, again, extending the metric $\tilde{\rho}$ along shortest paths.)

The above inequality and Theorem~\ref{th:intr-diam-bound} and Proposition~\ref{pro:intr-diameter} combine to prove the following inequality
\begin{theorem}\label{diameterbound}
If a graph be a locally finite, connected, and satisfy $CDE'(n,K)$ with $K>0$, then the diameter of the natural distance on the graph is finite, moreover the upper bound quantitative estimation is
\[D\leq 2\pi\sqrt{\frac{6D_\mu n}{K}}.\]
\end{theorem}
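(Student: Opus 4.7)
The plan is to derive the graph--distance diameter bound by chaining together three metrics: the natural graph distance $d$, Huang's intrinsic metric $\widetilde{\rho}$, and the canonical distance $\widetilde{d}$. Since Theorem~\ref{th:intr-diam-bound} has already bounded the canonical diameter $\widetilde{D}$, and Proposition~\ref{pro:intr-diameter} already relates $\widetilde{\rho}$ to $\widetilde{d}$, the only new ingredient is a comparison between $d$ and $\widetilde{\rho}$ that uses the finiteness of $D_\mu$.

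First I would write down the per-edge estimate for $\widetilde{\rho}$. By its definition, for $x\sim y$,
\[\widetilde{\rho}(x,y)=\min\left\{\sqrt{\tfrac{\mu(x)}{m(x)}},\sqrt{\tfrac{\mu(y)}{m(y)}}\right\}\geq \frac{1}{\sqrt{D_\mu}},\]
using $m(z)/\mu(z)\leq D_\mu$ at both endpoints. Extending $\widetilde{\rho}$ along paths and telescoping, any path from $x$ to $y$ of $d$-length $d(x,y)$ has $\widetilde{\rho}$-length at least $d(x,y)/\sqrt{D_\mu}$; taking the infimum over paths yields the pointwise inequality
\[\widetilde{\rho}(x,y)\geq \frac{d(x,y)}{\sqrt{D_\mu}},\qquad x,y\in V.\]

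Next I would combine this with Proposition~\ref{pro:intr-diameter}, which gives $\sqrt{2}\,\widetilde{\rho}(x,y)\leq \widetilde{d}(x,y)$, to obtain
\[d(x,y)\leq \sqrt{D_\mu}\,\widetilde{\rho}(x,y)\leq \sqrt{\tfrac{D_\mu}{2}}\,\widetilde{d}(x,y).\]
Finally I would invoke Theorem~\ref{th:intr-diam-bound}: under $CDE'(n,K)$ with $K>0$ one has $\widetilde{d}(x,y)\leq \widetilde{D}\leq 4\sqrt{3}\,\pi\sqrt{n/K}$. Substituting produces
\[d(x,y)\leq \sqrt{\tfrac{D_\mu}{2}}\cdot 4\sqrt{3}\,\pi\sqrt{\tfrac{n}{K}}=2\pi\sqrt{\tfrac{6D_\mu n}{K}},\]
so taking the supremum over $x,y\in V$ gives the stated bound $D\leq 2\pi\sqrt{6D_\mu n/K}$, and in particular $D<\infty$.

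Essentially no step here is a serious obstacle: the analytic work (logarithmic Sobolev inequality, heat kernel upper bound, finiteness of measure) is entirely absorbed into Theorem~\ref{th:intr-diam-bound}, and the geometric comparison $\sqrt{2}\widetilde{\rho}\leq \widetilde{d}$ is quoted from \cite{KLSW15}. The only point that demands care is the verification $\widetilde{\rho}(x,y)\geq d(x,y)/\sqrt{D_\mu}$, which reduces to the per-edge bound $\widetilde{\rho}(x,y)\geq 1/\sqrt{D_\mu}$ and the definition of $\widetilde{\rho}$ as a path metric; this is routine once one remembers that $D_\mu<\infty$ is precisely what makes each edge non-degenerate in the $\widetilde{\rho}$ sense.
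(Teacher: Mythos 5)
Your proposal is correct and follows exactly the paper's own route: you combine the per-edge lower bound $\widetilde{\rho}(x,y)\geq 1/\sqrt{D_\mu}$ (extended along shortest paths to get $\widetilde{\rho}(x,y)\geq d(x,y)/\sqrt{D_\mu}$) with Proposition~\ref{pro:intr-diameter} and Theorem~\ref{th:intr-diam-bound}, and the arithmetic $\sqrt{D_\mu/2}\cdot 4\sqrt{3}\pi\sqrt{n/K}=2\pi\sqrt{6D_\mu n/K}$ checks out. There is nothing to add; this is the argument the paper uses.
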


\bibliographystyle{amsalpha}

Paul Horn, \\
Department of Mathematics, Denver University, Denver, Colorado, USA \\
\textsf{Paul.Horn@du.edu}\\
Yong Lin,\\
Department of Mathematics, Renmin University of China, Beijing, China\\
\textsf{linyong01@ruc.edu.cn}\\
Shuang Liu,\\
Department of Mathematics, Renmin University of China, Beijing, China\\
\textsf{cherrybu@ruc.edu.cn}\\
Shing-Tung Yau,\\
Department of Mathematics, Harvard University, Cambridge, Massachusetts, USA\\
\textsf{yau@math.harvard.edu}\\
\end{document}